\newcommand{\CC}{{\mathbb C}}
\newcommand{\ZZ}{{\mathbb Z}}
\newcommand{\NN}{\mathbb N}
\newcommand{\AAA}{{\mathbb A}}
\newcommand{\GL}{\operatorname{GL}}
\newcommand{\Hom}{\operatorname{Hom}}
\newcommand{\Lie}{\operatorname{Lie}}
\newcommand{\Ker}{\operatorname{Ker}}
\newcommand{\Ima}{\operatorname{Im}}
\newcommand{\GIT}{/\!\!/}
\newcommand{\II}{\mathcal{I}}
\newcommand{\EE}{\mathcal{E}}
\newcommand{\GG}{\mathcal{G}}
\newcommand{\IS}{\mathcal{I}^s}
\newcommand{\M}{\mathcal{M}}
\newcommand{\TM}{\widetilde{{\mathcal{M}}}}
\newcommand{\Liegl}{\mathfrak{gl}}
\newcommand{\Liesl}{\mathfrak{sl}}
\newcommand{\aD}{\prescript{a}{}{D}}
\newcommand{\ad}{\prescript{a}{}{d}}
\newcommand{\cd}{\prescript{c}{}{d}}
\newcommand{\cD}{\prescript{c}{}{D}}
\newcommand{\Y}{\mathscr{Y}}
\newcommand{\vv}{\underline{v}}
\numberwithin{equation}{section}
\newtheorem{Theorem}{Theorem}[section]
\newtheorem{Corollary}[Theorem]{Corollary}
\newtheorem{Lemma}[Theorem]{Lemma}
\newtheorem{Proposition}[Theorem]{Proposition}
\newtheorem*{Theorem*}{Theorem}
\theoremstyle{definition}
\newtheorem{Definition}[Theorem]{Definition}
\newtheorem{Example}[equation]{Example}
\theoremstyle{remark}
\newtheorem{Remark}[Theorem]{Remark}
\title{Supersymmetry for brane diagrams and bow varieties}
\author{Tiziano Gaibisso}
\address{Department of Mathematics, Imperial College London, London, UK}
\email{t.gaibisso22@imperial.ac.uk}
\begin{document}
\begin{abstract}
We provide combinatorial and numerical criteria to characterize affine type A bow diagrams giving rise to a non-empty bow variety. The key idea is to prove that such diagrams correspond to supersymmetric brane systems in type IIB string theory, allowing us to reformulate the problem in purely combinatorial terms. To achieve this, we characterize supersymmetry for affine type A brane systems (and, by extension, for types B, C, and D) using Hanany–Witten transitions. This leads to a finite-step algorithm that decides whether a given affine type A bow or brane diagram is supersymmetric, which consists in checking a finite set of inequalities, so providing a numerical criterion for non-emptiness. Finally, we provide a different perspective by introducing a further criterion in terms of weights of affine Lie algebras.

Along the way, we also prove that increasing dimension vectors between two consecutive x-points or arrows in a bow diagram (not necessarily of type A) preserves the properties of generating non-empty bow varieties.
\end{abstract}
\maketitle

\tableofcontents

\section{Introduction}
\subsection{Motivation and key ideas}
Bow varieties were introduced by Cherkis \cite{cherkis2011instantons} as an ADHM-type description of certain moduli spaces of instantons on the Taub-NUT space. They provide a natural generalization of Nakajima quiver varieties \cite{Nak94} \cite{nakajima1998quiver} that has attracted considerable attention. The algebro-geometric study of bow varieties was initiated by Nakajima and Takayama \cite{Tak16}, \cite{NT17}, who developed a `quiver description' for those of affine type A (see also \cite{gaibisso2024quiver} for a generalization to arbitrary type). This approach realizes bow varieties as moduli spaces of certain quiver representations, offering a powerful framework to address algebro-geometric questions and explore their connection with representation theory (see also \cite{nakajima2018towards}).

Furthermore, every construction of a bow variety begins with combinatorial data known as \textit{bow diagrams}, which generalize the data in the Nakajima's varieties given by a quiver together with a framed dimension vector. In affine type A, there is a known relationship between bow diagrams and brane systems in type IIB string theory \cite{C09, SCh11super}. 

This perspective on bow
diagrams was further elucidated in \cite{NT17}. For example, they proved that Hanany–Witten transitions for 5-branes induce an equivalence relation on bow diagrams (see \S\ref{s.hw}). Additionally, they also proved that Coulomb branches of affine type A supersymmetric quiver gauge theories \cite{SW94, BDG17, Nak15a, nakajima2015towards, braverman2016towards, BFNI} are bow varieties. Therefore, affine type A bow varieties provide a natural framework to observe symplectic duality \cite{BLPW16} and $3d$ mirror symmetry or S-duality (see, for example, \cite{RS20}). For an expository work on $3d$ mirror symmetry, see, for instance, \cite{WY2023}.

The present work was motivated by the basic question: when does a bow diagram generate a non-empty bow variety? For Nakajima quiver varieties, this is trivial, as any quiver with any framed dimension vector generates a non-empty variety (for instance, the one with trivial deformation and stability parameters). However, it is not difficult to construct bow diagrams that generate only empty bow varieties. The simplest example is given by:
\begin{equation}\label{introexample}
    \input{Fig/Introexample}
\end{equation}

See also Example \ref{ex1}. More generally, any diagram which admits a sequence of Hanany—Witten transitions to a diagram having a negative dimension generates only empty bow varieties. It turns out the converse, while not obvious, is true, and is proved in our main theorem (see Section \ref{s.step1}). In string theory, there is a further notion of brane diagrams which ``break supersymmetry" \cite{HW}. The example ~\eqref{introexample} clearly is one such diagram, and another part of our main theorem is that this condition is also equivalent to generating only empty bow varieties.

In this paper, we characterize supersymmetry in affine type A brane systems using Hanany--Witten transitions, deriving a numerical criterion that involves infinitely many inequalities.
In the process we find an algorithm for testing the supersymmetry condition, and prove that for any given configuration, it suffices to test only finitely many of the inequalities to determine whether it is supersymmetric.
To our knowledge, no such characterization existed in literature. It follows that supersymmetry is equivalent to generating a non-empty bow variety, and that, in such a case, the variety with trivial deformation and stability parameters is always non-empty. Finally, we use Nakajima--Takayama’s stratification of bow varieties \cite[Theorem 7.26, Remark 7.27]{NT17} to provide an additional characterization of supersymmetry in terms of dominant weights of affine Lie algebras.

\subsection{Overview}
Let us now make the considerations above more precise.  

In this paper, an affine type A \textit{brane diagram} $\mathcal{B}$ consists of:  
\begin{enumerate}
    \item A graph with two sets of vertices, denoted by $\vert$ (NS5-branes) and $\times$ (D5-branes), connected by edges (D3-branes).
    \item Immersions of the edges together with an embedding of the set of vertices into a fixed circle, up to reparametrisation.
\end{enumerate}
See Definition \ref{Definitionbranediagram} and Figure \ref{fig:affinesystems}.

Given a brane diagram $\mathcal{B}$, we construct a \textit{bow diagram} as follows (Definition \ref{definitionbowdiagram}). First, we replace the NS5-branes with anticlockwise-oriented arrows connected by anticlockwise-oriented wavy lines going from the endpoint of an arrow to the starting point of the next arrow.  This graph is called \textit{bow}. Next, we replace the D5-branes with points, called \textit{x-points}, placed on the wavy lines. The collection of x-points and arrows partitions the wavy lines into segments representing the spaces between 5-branes in the brane diagram. Finally, we assign to each segment $\zeta$ the total number $v_\zeta$ of segments of D3-branes which run through the corresponding space. The vector $\vv=(v_\zeta)_\zeta$ is called \textit{dimension vector}. In this work, we extend the notion of a bow diagram by allowing the dimension vector $\vv$ to have negative integer entries.

Thus, a \textit{bow diagram} is a triple $(B, \Lambda, \vv)$ where $B$ denotes the bow, $\Lambda$ the set of x-points and $\vv$ the dimension vector. See Figures \ref{fig:branetobow} and \ref{fig:4}.

Analogously to the Nakajima quiver varieties, a bow diagram $(B,\Lambda,\vv)$ with nonnegative dimensions (i.e. originating from a brane diagram), determines a symplectic affine variety $\TM$ carrying a Hamiltonian action by a reductive group $\GG$ with moment map denoted by $\mu$. Then, one defines the \textit{bow variety} associated to $(B,\Lambda,\vv)$, with deformation parameter $\lambda\in(\operatorname{Lie}(\GG)^*)^\GG$ and stability parameter $\theta$, where $\theta$ is a rational $\GG$-character, as the Hamiltonian reduction $\M_{\lambda,\theta}=\mu^{-1}(\lambda)\GIT_\theta\GG$ (see \S\ref{s.bowvarieties}). If some dimension $v_\zeta$ is negative, then we define $\M_{\lambda,\theta}=\varnothing$.

Let us now recall the notion of supersymmetry (see \S\ref{s.branesystemsandbowdiagrams}). In physics, D3-branes with endpoints on 5-branes of different types are called \textit{fixed} (because, unlike those with endpoints on the same type, they have no freedom to move in the ambient space containing all the branes). See Figure \ref{fig:finitesystem} and Definition \ref{definitionfixed}. In this paper, we adopt the convention that a fixed D3-brane is oriented by regarding its endpoint on the NS5-brane as the starting point. An affine type A brane diagram is said to be \textit{supersymmetric} if, for each pair of 5-branes of different types and every $t \in \mathbb{N}$, there is at most one clockwise-oriented fixed D3-brane and at most one anticlockwise-oriented fixed D3-brane connecting them, each completing $t$ full loops. A bow diagram is supersymmetric if it originates from a supersymmetric brane diagram. 

In string theory, the Hanany--Witten transition \cite{HW} describes a local process of brane creation and annihilation, inducing an equivalence relation on brane diagrams. In terms of bow diagrams, it takes the following form:
\begin{equation}
    \begin{aligned}\input{Fig/HWdiagramIntro}\label{hwtransition-intro}\end{aligned}
\end{equation}
Nakajima and Takayama \cite[Proposition 7.1]{NT17} translated this process into isomorphisms of bow varieties, which means that two bow diagrams related by a sequence of Hanany--Witten transitions generate isomorphic bow varieties (see Theorem \ref{HWtransitionsforbow}).

Given a bow diagram, by successive application of Hanany--Witten transitions, we can always separate x-points and arrows by moving all the x-points on the same wavy line (see Figures \ref{StypeAffine} and \ref{StypeA}). For a separated affine type A bow diagram, the fact that every sequence of Hanany--Witten transitions does not generate negative dimensions can be translated into a set of infinitely many inequalities, which will be called \textit{supersymmetry inequalities} (see Definition \ref{HWinequalities}). 

Let us define a bow subdiagram of $(B,\Lambda,\vv)$ as a bow diagram $(B,\Lambda,\vv')$ with $v'_\zeta\leq v_\zeta$ for every segment $\zeta$. Then, we say that a bow diagram satisfies the \textit{stratum condition} (Definition \ref{def:stratumcondition}) if every separated Hanany--Witten equivalent bow diagram admits a subdiagram satisfying the following properties:
\begin{itemize}
    \item for every x-point, the difference between the adjacent dimensions is unchanged;
    \item we can apply a sequence of Hanany--Witten transition to obtain a bow diagram with nonnegative dimensions and satisfying the balanced condition (that is a diagram such that for every arrow, the difference between the adjacent dimensions is zero).
\end{itemize}
Due to the correspondence between balanced bow diagrams and weights of affine Lie algebras (see \S\ref{s.balance}), this condition can be reformulated as a numerical criterion in terms of weights of affine type A Kac--Moody Lie algebra (see \S\ref{s.stratumcriterion}). This criterion is equivalent to requiring the existence of a non-empty stratum of the non-deformed bow variety $\M_{0,0}$ and can, in fact, be derived using the same arguments as in \cite[Theorem 7.26, Remark 7.27]{NT17} (see Remark \ref{Rem:stratumconditionexistencestratum}). However, in this work, we will not rely on this interpretation.

We are ready to state the main result of the paper.
\begin{Theorem*}[See Theorem \ref{maintheorem}]
    Given a bow diagram $(B,\Lambda,v)$ of affine type A, the following are equivalent.
\begin{enumerate}[label=(\alph*)]
\item $\M_{0,0} \neq \varnothing$.
    \item There exists $(\lambda,\theta) \in \CC^\II\times\ZZ^\II$, such that $\M_{\lambda,\theta}\neq\varnothing$.
    \item Every Hanany--Witten equivalent bow diagram originates from a brane diagram.
    \item Any Hanany--Witten equivalent separated bow diagram satisfies supersymmetry inequalities.
    \item It is supersymmetric.
    \item It satisfies the stratum condition.
    \end{enumerate}
\end{Theorem*}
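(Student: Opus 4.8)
The plan is to prove the six statements equivalent by running the single cycle
\[
(a)\Rightarrow(b)\Rightarrow(c)\Rightarrow(d)\Rightarrow(e)\Rightarrow(f)\Rightarrow(a),
\]
so that each arrow consumes exactly one of the tools assembled in the earlier sections. The first three implications are essentially formal. For $(a)\Rightarrow(b)$ I would simply take $(\lambda,\theta)=(0,0)$. For $(b)\Rightarrow(c)$, suppose $\M_{\lambda,\theta}\neq\varnothing$ for some $(\lambda,\theta)\in\CC^\II\times\ZZ^\II$; by Theorem \ref{HWtransitionsforbow} every Hanany--Witten equivalent bow diagram carries a bow variety isomorphic to $\M_{\lambda,\theta}$ (after transporting the parameters), hence non-empty, and since a diagram with a negative dimension has empty bow variety by convention, all Hanany--Witten equivalent diagrams must have nonnegative dimension vectors, i.e.\ originate from brane diagrams. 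For $(c)\Rightarrow(d)$ I would unwind Definition \ref{HWinequalities}: the supersymmetry inequalities attached to a separated diagram are, by construction, precisely the assertion that no sequence of Hanany--Witten moves produces a negative entry, and this is exactly what $(c)$ guarantees for the separated representatives together with all of their descendants.

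The implication $(d)\Rightarrow(e)$ is the combinatorial heart, and here I would invoke the characterization of supersymmetry by Hanany--Witten transitions established earlier in the paper. The idea is that, after separating all x-points from the arrows, each clockwise- or anticlockwise-oriented fixed D3-brane completing $t$ full loops between a fixed pair of 5-branes is produced by a specific Hanany--Witten descendant, so that the supersymmetry requirement --- at most one fixed brane of each orientation for each pair and each $t\in\NN$ --- matches the infinite family of supersymmetry inequalities term by term. The same dictionary reads backwards, giving $(e)\Rightarrow(d)$ as well; only the forward direction is needed to continue the cycle.

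For $(e)\Rightarrow(f)$ I would pass through the weight-theoretic reformulation of the stratum condition from \S\ref{s.stratumcriterion}. Reading a supersymmetric separated representative through the correspondence between balanced diagrams and dominant weights of the affine type A Kac--Moody algebra (\S\ref{s.balance}), one extracts, for each separated Hanany--Witten representative, a subdiagram $(B,\Lambda,\vv')$ with $\vv'\le\vv$ whose x-point jumps agree with those of $\vv$ and which is Hanany--Witten reachable from a nonnegative balanced diagram; this is exactly Definition \ref{def:stratumcondition}. Producing this subdiagram --- equivalently, exhibiting the relevant dominant weight --- is where the affine Lie algebra combinatorics does its work.

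Finally $(f)\Rightarrow(a)$ is the geometric payoff and, I expect, the main obstacle of the assembly. The base case is that a nonnegative balanced diagram has non-empty $\M_{0,0}$, which I would establish directly from the quiver/bow description rather than from the stratification of \cite[Theorem 7.26, Remark 7.27]{NT17}, as the paper chooses not to rely on that interpretation; Hanany--Witten invariance (Theorem \ref{HWtransitionsforbow}) then transports non-emptiness to the subdiagram $(B,\Lambda,\vv')$, and the monotonicity result --- that increasing dimensions between two consecutive x-points or arrows preserves non-emptiness --- lifts it from $\vv'$ up to $\vv$, the hypothesis that the x-point jumps are unchanged being exactly what makes the increase $\vv-\vv'$ of the admissible kind. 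The delicate points here are verifying balanced non-emptiness without circularity and checking that the admissibility condition on the increase is genuinely met by the subdiagram produced in $(e)\Rightarrow(f)$; once these are in place the chain yields $\M_{0,0}\neq\varnothing$ and closes the cycle, proving Theorem \ref{maintheorem}.
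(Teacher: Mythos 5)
Your overall architecture is legitimate and close to the paper's (the paper runs $(a)\Rightarrow(b)\Rightarrow(c)\Rightarrow(d)\Rightarrow(e)\Rightarrow(a)$ with $(d)\Leftrightarrow(f)$ attached separately, while you route $(f)$ inside the cycle), but your proposal has a genuine gap at its self-declared ``combinatorial heart,'' the implication $(d)\Rightarrow(e)$. First, the tool you invoke --- ``the characterization of supersymmetry by Hanany--Witten transitions established earlier in the paper'' --- is circular: that characterization \emph{is} the equivalence $(c)\Leftrightarrow(d)\Leftrightarrow(e)$ being proved. Second, the ``term by term'' dictionary you describe does not exist as stated: $(e)$ is an existence statement (one must exhibit a brane diagram, i.e.\ a globally consistent assignment of endpoints to \emph{all} D3-branes, fixed and unfixed), whereas $(d)$ is a family of numerical inequalities on dimensions after Hanany--Witten moves; no individual inequality corresponds to an individual fixed D3-brane of winding number $t$, and no matching argument will produce the required global assignment. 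The paper's Step 3 is a genuine construction with no shortcut: Lemma \ref{Lemma.reducing} normalizes to $0\le v_0-v_{-w}<w$; Proposition \ref{Prop.reducing} subtracts $a=\min\{v_1,\dots,v_n\}$ from the arrow-adjacent dimensions, checks by direct computation that the inequalities \eqref{susyinequalities} survive, and lifts supersymmetry back via supersymmetric increments (Remark \ref{Rem-susyincrements}); this reduces to finite type A, where Proposition \ref{Prop-finitetypeAsupersymmetry} argues by induction on $n$, greedily attaching to $\bigcirc_1$ the maximal number $f$ of fixed D3-branes and using $\cD^1_{s,k}\geq 0$ to prove $v_1\geq v_0-f$ together with the existence of a saturated index $\tilde{\jmath}$ with $v_{-\tilde{\jmath}}=f-\tilde{\jmath}$. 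None of this content is visible in your sketch, so as written the cycle does not close.

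Two secondary issues. In $(b)\Rightarrow(c)$ you apply Theorem \ref{HWtransitionsforbow} to arbitrary Hanany--Witten descendants, but in \cite{NT17} that theorem is stated only for diagrams with nonnegative dimension vectors (Remark \ref{Rem:Step1}); to rule out a negative entry appearing mid-chain you need the one-step Lemma \ref{LEMMASW} (from \cite{SW23}, \cite{gaibisso2024quiver}) to drive the induction, which your argument omits. For the tail of your cycle: your $(f)\Rightarrow(a)$ is essentially sound --- in particular your observation that unchanged x-point jumps make $\vv-\vv'$ a composition of supersymmetric increments between arrows is correct, and it is exactly the paper's Step 4 machinery --- but the base case ``balanced nonnegative $\Rightarrow$ $\M_{0,0}\neq\varnothing$'' is not free: the natural proof realizes such a diagram from the zero-dimension diagram by increments between x-points, which consumes part \textit{(ii)} of Theorem \ref{supersymmetricincrementsforbows} (the delicate verification of conditions \eqref{condition-S1} and \eqref{condition-S2}) or Corollary \ref{corollarysupersymmetricincrementsforbow}, so ``directly from the quiver description'' conceals the real work rather than avoiding circularity. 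Finally, your $(e)\Rightarrow(f)$ is a route the paper explicitly declines to develop (it proves $(d)\Leftrightarrow(f)$ instead, via Propositions \ref{prop.inequalitiesstratum}, \ref{Lemma.reducingstratum} and \ref{Prop.inequalitiesstratumfinite}); it is plausible, but you would still owe the verification, for \emph{every} separated representative, that the subdiagram obtained by discarding D3-branes between NS5-branes satisfies the generalized Young constraint of Proposition \ref{separatedtobalancetheorem} and Lemma \ref{Affinestratumcondition}, i.e.\ is Hanany--Witten equivalent to a balanced diagram.
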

The proof is divided into five main steps, which are summarized in the following diagram.
\begin{equation*}
    \input{Fig/Summaryproof}
\end{equation*}
The equivalence $(c)\Leftrightarrow(d)\Leftrightarrow(e)$ is nothing but a characterization of supersymmetry for bow diagrams in type A and it does not involve bow varieties. 

Supersymmetry is preserved under S-duality for affine type A brane systems, which consists in replacing every NS5-brane with a D5-brane and vice versa \cite[\S II.D]{GK99}, \cite[\S 7.1]{NT17}. In Remark \ref{remarkSdualstratumcondition} we also discuss S-dual counterparts of the equivalent conditions in the main theorem.

Let us briefly discuss the proof of the main theorem. It is clear that $(a)\Rightarrow(b)$. The implication $(e)\Rightarrow(c)$ follows from Hanany--Witten's work \cite{HW}. $Step\ 1$ is straightforward and is also a consequence of \cite[Corollary 3.14]{SW23} and \cite[Proposition 7.1]{NT17}. 

$Step\ 2$ is also straightforward and is the motivation for the supersymmetry inequalities. A direct proof of the reverse implication, i.e. $(c)\Leftarrow(d)$, requires a more involved argument. Although this is not essential for the formal proof of the main theorem, we will provide a proof of it in Appendix \ref{InverseStep2-Appendic}, as the argument is interesting in its own right and, as far as we know, introduces new methods for carrying out computations with Hanany--Witten transitions in affine type A bow diagrams.

Finally, $Steps\ 3$, $4$ and $5$ are the main steps. Their proof strongly relies on the idea of building upon finite type A diagrams by using two moves: Hanany--Witten transitions and \textit{supersymmetric increments} (i.e. increments of the dimension vector of a bow diagram corresponding to adding some D3-branes with endpoints on 5-branes of the same type). See Definition \ref{Def-susyincrements}.
\begin{Theorem*}[see Theorem \ref{supersymmetricincrementsforbows}, Corollary \ref{corollarysupersymmetricincrementsforbow}]
If a bow diagram generates a non-empty bow variety, then every bow diagram obtained via supersymmetric increments generates a non-empty bow variety.
\end{Theorem*}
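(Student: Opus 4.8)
The plan is to deduce the statement from the non-emptiness of a single affine quotient together with an explicit direct-sum construction. Recall that $\M_{0,0}=\mu^{-1}(0)\GIT_0\GG$ is an affine quotient, hence non-empty precisely when $\mu^{-1}(0)\neq\varnothing$; so it suffices to manufacture, out of one solution of the bow equations for $(B,\Lambda,\vv)$, a solution for every diagram obtained by a supersymmetric increment (and I will in fact preserve an arbitrary deformation parameter $\lambda$). First I would reduce an arbitrary supersymmetric increment to a composition of elementary ones, each adding a single free D3-brane whose two endpoints lie on 5-branes of the same type (Definition \ref{Def-susyincrements}). Such a brane increments the dimension vector by $1$ on every segment of a contiguous arc; since this arc is the concatenation of the sub-arcs cut out by the intermediate 5-branes of that type, its indicator splits as a sum and the corresponding increments compose. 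Thus it is enough to treat one elementary increment, of which there are two kinds: (b) an NS5--NS5 brane, i.e. an arc between two consecutive arrows passing over some x-points, and (a) a D5--D5 brane, i.e. an arc between two consecutive x-points passing over some arrows.

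For case (b) I would argue by hand. Given a point of $\mu^{-1}(\lambda)$ for $(B,\Lambda,\vv)$, realised on spaces $\{V_\zeta\}$, I would enlarge each arc space to $V_\zeta\oplus\CC$ (leaving the remaining spaces untouched) and extend every map block-diagonally, placing on the new line the datum of a single free D3-brane stretched between the two bounding NS5-branes: on each interior segment the new copy of $\CC$ carries a scalar Nahm/Higgs datum, at each interior x-point the two copies of $\CC$ are identified with vanishing framing maps (the brane passes over the D5-brane and produces no fixed brane there), and at the two bounding arrows the new summand maps to and from the zero space by zero maps (the brane ends on the NS5-brane). The key observation is that this is a \emph{genuine} direct sum of bow data: the only moment-map terms that could couple the old and new summands are the framing bilinears located at x-points, and these vanish because the new summand carries no framing at any x-point it meets, while arrows carry no framing at all (this is precisely the origin of the $+1$ attached to x-points in the Hanany--Witten relation \eqref{hwtransition-intro}). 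Choosing the scalars on the new summand so that its moment map equals the corresponding component of $\lambda$, the enlarged datum lies in $\mu^{-1}(\lambda)$ for $\vv$ increased by $1$ along the arc, and iterating $k$ times realises an increment by $k$.

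Case (a) is where this construction genuinely fails: at the two bounding x-points the new brane ends on a D5-brane and must therefore use its one-dimensional framing, so the framing bilinear acquires cross terms between the old and new summands that need not vanish. To bypass this I would invoke S-duality, which exchanges NS5- and D5-branes and induces an isomorphism of bow varieties preserving $\M_{0,0}$ (\cite[\S 7.1]{NT17}, \cite[\S II.D]{GK99}). Under S-duality a type (a) increment on $(B,\Lambda,\vv)$ becomes a type (b) increment on the S-dual diagram; since S-duality preserves non-emptiness and intertwines the two operations, the case already proved for (b) yields (a).

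It remains to pass from $\M_{0,0}$ to the phrase ``generates a non-empty bow variety''. The direct-sum construction preserves the deformation parameter verbatim---$\lambda$ is a central scalar attached to each segment and the arc segments merely grow---so if $\mu^{-1}(\lambda)\neq\varnothing$ then the enlarged fibre is non-empty for the same $\lambda$. The step I expect to be the main obstacle is the behaviour of the stability parameter $\theta$: passing to a direct sum introduces the new summand as a proper sub-object, which may destroy $\theta$-semistability. I would resolve this either by working throughout with the affine quotient $\M_{0,0}$, where semistability is vacuous and the argument above is already complete, or---if one insists on a fixed nonzero $\theta$---by combining the construction with the invariance of non-emptiness under Hanany--Witten transitions (Theorem \ref{HWtransitionsforbow}) to separate the diagram first, and then establishing $\theta'$-semistability of the enlarged datum by a genericity argument.
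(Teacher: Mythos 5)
Your case (b) is, up to the choice of scalars, exactly the paper's proof of part (i) of Theorem \ref{supersymmetricincrementsforbows}: extend block-diagonally with $A_x^{new}=\operatorname{diag}(A_x,1)$, zero framing on the new line, zero $(C,D)$-blocks; conditions (\ref{condition-S1}) and (\ref{condition-S2}) survive because the new block of $A_x$ is invertible (a check you assert but do not make). However, your parenthetical claim to preserve an arbitrary deformation parameter in case (b) is false. On the new rank-one summand, equation (\ref{condition-a}) forces the two scalar Nahm data at each interior x-point to coincide, so the new contributions to the moment map telescope to zero around the wavy line $\sigma$, while $\lambda$ contributes $\lambda_\sigma$ on its first segment; the construction therefore only closes when $\lambda_\sigma=0$, and no choice of scalars helps. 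This is not an artifact: the paper's remark following Corollary \ref{corollarysupersymmetricincrementsforbow} notes that part (i) genuinely fails for $\lambda\neq 0$, since otherwise every Nakajima quiver variety with trivial stability parameter would be non-empty for all $\lambda$. Restricted to $\M_{0,0}$, as your closing paragraph proposes, case (b) is fine and agrees with the paper.

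The genuine gap is case (a). You dispose of D5--D5 increments by asserting that S-duality ``induces an isomorphism of bow varieties preserving $\M_{0,0}$'', citing \cite[\S 7.1]{NT17} and \cite[\S II.D]{GK99}; neither reference contains such an isomorphism. In those sources S-duality is an operation on diagrams, and in the present paper the statement that S-dual diagrams have matching non-emptiness is a \emph{consequence} of Theorem \ref{maintheorem} (via the manifestly S-self-dual condition $(e)$), whose proof in Step 4 relies on the very increments theorem you are proving --- so your route is circular; as an unconditional isomorphism of varieties the claim is 3d mirror symmetry, not an available tool. The paper instead handles case (a) directly, and for arbitrary $\lambda$: for two consecutive x-points $x_1,x_2$ bounding a single segment $\zeta$, adjoin $\CC$ only to $V_\zeta$, set $B^{new}_\zeta=\operatorname{diag}(B_\zeta,c)$ with $c$ generic, build $a^{new}_{x_1}$ and $b^{new}_{x_2}$ by appending a $1$ to $a_{x_1}$ and $b_{x_2}$, and solve for the extra row and column of $A^{new}_{x_1}$, $A^{new}_{x_2}$ from (\ref{condition-a}) using $(B^-_{x_1}-c)^{-1}$ and $(B^+_{x_2}-c)^{-1}$ --- a non-split extension, not a direct sum --- and then verifies (\ref{condition-S1}) and (\ref{condition-S2}) by explicit arguments exploiting the invertibility of $B_\zeta-c$. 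Corollary \ref{corollarysupersymmetricincrementsforbow} then reduces increments between distant x-points to this consecutive case via Hanany--Witten transitions (Theorem \ref{HWtransitionsforbow}). Precisely the cross terms in the framing bilinear that you correctly identified as the obstruction are what this choice of $A^{new}$ absorbs; some such explicit construction is what your proposal is missing.
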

We notice that such increments can be generalized to the case of bow diagrams with any underlying quiver \cite{gaibisso2024quiver}. See Remark \ref{Rem.susyincrementsforbowdiagramsingeneraltype}.

From the proof of the main theorem, we derive a finite-step algorithm for detecting supersymmetry in bow diagrams. Additionally, we present a method to construct both a supersymmetric brane diagram and an element of $\M_{0,0}$ associated with supersymmetric bow diagrams. In Section \ref{section4}, we provide a detailed discussion of the algorithm and these methods, without requiring the reader to be familiar with the proof of the main theorem.

As noticed in Remark \ref{Remark.b.c.d.}, although we only deal with type A brane diagrams, our algorithm also provides a systematic method to detect supersymmetry for brane diagrams of type B,C and D (see for example \cite{bourget2023branes}).

The paper is organized as follows. In Section \ref{section2}, we start recalling the definition of brane and bow diagrams. Then we introduce the notion of supersymmetric diagrams and supersymmetric increments. Finally, we review the quiver description of bow varieties. In Section \ref{SectionHW}, we recall Hanany–Witten transitions for bow varieties and introduce supersymmetry inequalities. In Section \ref{SectionWandStratum}, we recall the notion of balanced bow diagrams and their relation with weights of affine Kac--Moody Lie algebras. Then we introduce the stratum condition. In Section \ref{section3} we state and prove the main theorem.  In section \ref{section4} we present the algorithm to detect supersymmetry of bow diagrams. We also present a systematic method to construct supersymmetric brane diagrams and elements of $\M_{0,0}$ associated with supersymmetric bow diagrams. In Section \ref{section5} we provide explicit examples to illustrate how the algorithm works. In Appendix \ref{InverseStep2-Appendic} we provide a direct proof of $(d)\Longrightarrow(c)$. In Appendix \ref{s.weights}, we review useful facts about weights of affine Lie algebras, generalized Young diagrams and level-rank duality.
\addtocontents{toc}{\protect\setcounter{tocdepth}{1}}
\subsection*{Acknowledgement}
I am deeply grateful to Travis Schedler for his guidance and support throughout this work. I would like to thank Amihay Hanany for his invaluable explanations, and Hiraku Nakajima for insightful discussions and for bringing to my attention the relation between weights of affine Lie algebras and the stratification of bow varieties. I also thank Sam Bennet, Guhesh Kumaran, and Chunhao Li for their feedback on earlier versions of this paper. This work forms part of the author's PhD project at Imperial College London.

\addtocontents{toc}{\protect\setcounter{tocdepth}{1}}
\section{Brane and bow diagrams, supersymmetry and bow varieties}\label{section2}

\subsection{Brane and bow diagrams}\label{s.branesystemsandbowdiagrams}
In this subsection, we introduce affine type A brane diagrams and bow diagrams.

We begin by recalling the notion of a brane system of finite type A, which will motivate some of the definitions introduced later \cite{HW}. A \textit{brane} is an affine subspace of a ten-dimensional real vector space. We are interested in three types of branes, denoted, respectively, by NS5, D3 and D5, of dimensions, respectively, $6,4$ and $6$. Precisely, we fix a system of coordinates, say $(x_0,\dots,x_9)$. Then, NS5-branes are $6$-dimensional spaces with fixed values of $x_6,\dots,x_9$, that is, they run along $(x_0,x_1,x_2,x_3,x_4,x_5)$. Similarly, D3-branes run along $(x_0,x_1,x_2,x_6)$ and D5-branes run along $(x_0,x_1,x_2,x_7,x_8,x_9)$. Furthermore, D3-branes are stretched between 5-branes (possibly of different types). See Figure \ref{fig:finitesystem}. A \textit{brane system} or \textit{brane configuration} of finite type A is a collection of NS5, D3 and D5 branes. 
So, in the sense of Figure \ref{fig:finitesystem}, we will deal with brane systems like a configuration of lines in $\AAA^3$.
\begin{figure}[H]
    \centering
    \input{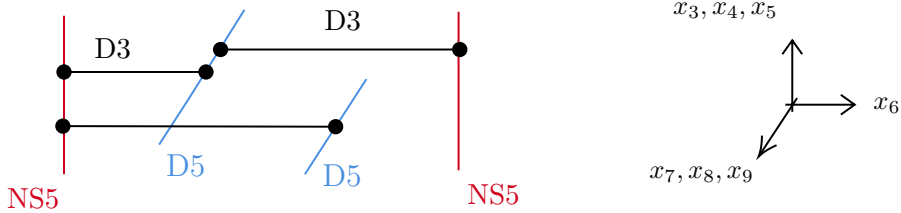}
    \caption{Vertical segments are NS5-branes, diagonal segments are D5-branes and horizontal segments are D3-branes. Black dots denote extreme points of D3-branes.}
    \label{fig:finitesystem}
\end{figure}

\begin{Definition}\label{Definitionbranediagram}
    A \textit{brane diagram}, denoted by $\mathcal{B}$, is given by the following data.
    \begin{enumerate}
\item     A graph defined by:
\begin{itemize}
    \item two types of vertices given by either $\vert$ or $\times$ and called, respectively, NS5-branes and D5-branes;
    \item edges connecting vertices (independently from the types).
\end{itemize}
\item Immersions of the edges together with an embedding of the discrete set of all vertices into a fixed segment or circle, up to reparametrisation. 
\end{enumerate}
A brane diagram is said to be of affine type A if the immersions are taken into a circle, and of finite type A if they are taken into a segment. See Figure \ref{fig:affinesystems}
\end{Definition}

\begin{figure}[H]
\label{branesystem}
    \centering
    \input{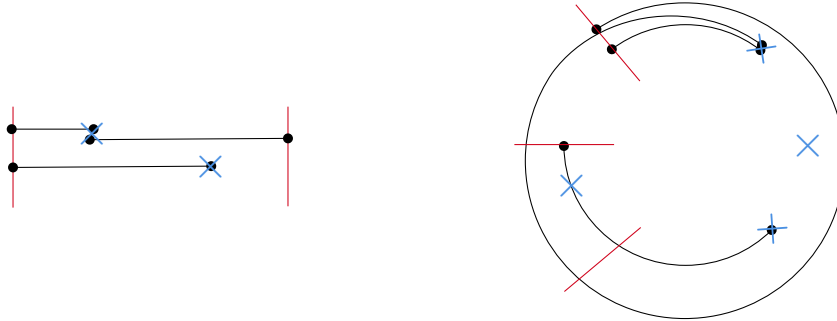}
    \caption{A fine type A brane diagram on the left and an affine type A brane diagram on the right.}
    \label{fig:affinesystems}
\end{figure}
In particular, there exists a correspondence between brane system of finite type A and brane diagrams immersed in a segment by replacing NS5-branes with $\vert$, D5-branes with $\times$ and D3-branes with edges. Although affine type A brane systems can be constructed similarly, for our purposes it will be sufficient to work solely with brane diagrams.

Given a brane diagram of affine type A we can construct a further diagram, called \textit{bow diagram}, as follows. See \cite{cherkis2011instantons} for the original definition.

We take an anticlockwise oriented quiver of affine type $A_{n-1}$ where $n$ is the number of NS5-branes.  Each vertex of the quiver is then replaced by anticlockwise-oriented \textit{wavy line} \input{Fig/wavyline}. Specifically, at each vertex, a wavy line is drawn starting from the endpoint of the incoming arrow and ending at the starting point of the outgoing arrow. Such a graph is called \textit{bow} and denoted by $B=(\II,\EE)$ where $\II$ is the set of wavy lines and $\EE$ is the set of arrows.

Next, since wavy lines encode the space between NS5-branes, it makes sense to replace vertices $\times$ (i.e. D5-branes) with points on wavy lines, still denoted by $\times$, called \textit{x-points}. Let us notice that this is possible because of the existence of the embedding of vertices in the definition of brane diagrams (Definition \ref{Definitionbranediagram}). 
By construction, the collection of x-points partitions the wavy lines into parts called \textit{segments}, which correspond to the spaces between 5-branes (not necessarily of the same type) in the brane diagram. We denote the set of segments by $\IS$.

Finally, we assign to each segment $\zeta$ the total number $v_\zeta$ of segments of D3-branes which run through the corresponding space. See Figure \ref{fig:branetobow}.
\begin{figure}
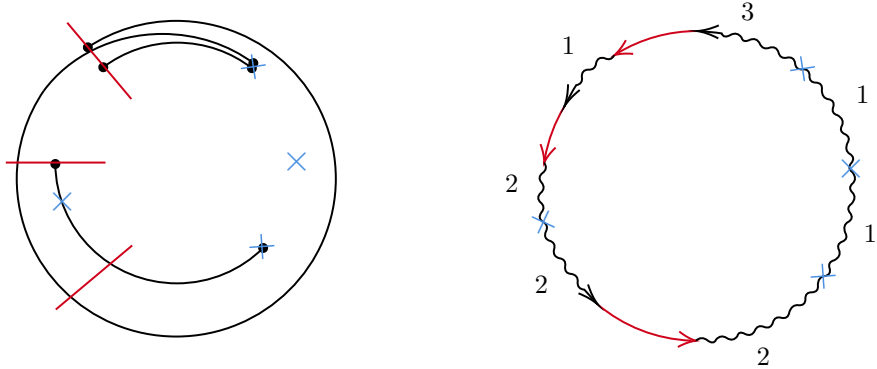

    \centering
    \include{Fig/Branetobow}
    \caption{Bow diagram associated with a brane diagram.}
    \label{fig:branetobow}
\end{figure}

In this work, we extend the notion of a bow diagram by allowing the dimension vector $\vv$ to have negative entries.

\begin{Definition}\label{definitionbowdiagram}
   A \textit{bow diagram} is a triple $(B, \Lambda, \vv)$, where $B$ is a \textit{bow}, $\Lambda$ is a set of x-points on wavy lines, and $\vv \in \ZZ^{\IS}$ is a vector called the \textit{dimension vector}. If $\vv\in\NN^{\IS}$, we say that the bow diagram \textit{originates from a brane diagram}.
\end{Definition}
It is often easier to work with \textit{simplified} bow diagrams of affine type A, which are obtained by replacing the arrows with $\bigcirc$ and removing waves from the wavy lines. See Figure \ref{fig:4}.
\begin{figure}
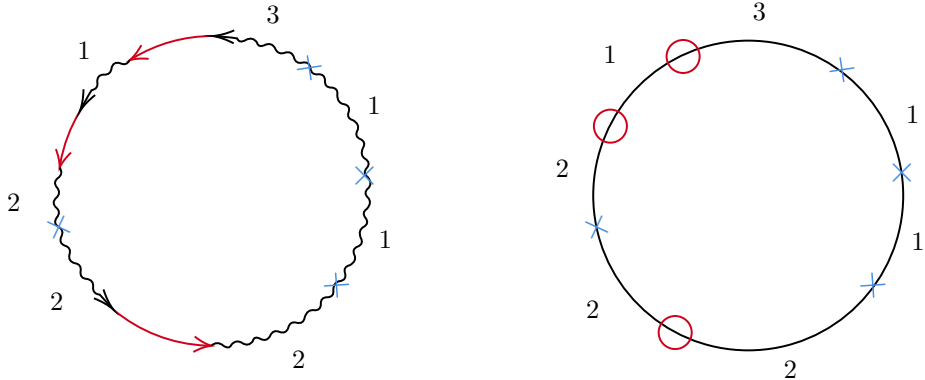

    \centering
    \include{Fig/Figure-Bowdiagrams}
    \caption{Bow diagram of affine type A on the left-hand side and its simplified version on the right-hand side.}
    \label{fig:4}
\end{figure}

\begin{Remark}
    Note that two distinct brane diagrams can give rise to the same bow diagram. We can think of bow diagrams originating from brane diagrams as brane diagrams for which we do not know where the D3-branes end.
\end{Remark}
\begin{Definition}\label{Definitonsubdiagram}
    A bow subdiagram of $(B,\Lambda,\vv)$ is a triple $(B,\Lambda,\vv')$ with $v'_\zeta\leq v_\zeta$ for every segment $\zeta$.
\end{Definition}
Finite type A brane diagrams can be viewed as affine type A diagrams containing a pair of 5-branes that delimit a region of space through which no D3-brane passes. Similarly, finite type A bow diagrams can be regarded as affine type A bow diagrams $(B,\Lambda,\vv)$ with a segment $\zeta\in\IS$ such that $v_\zeta=0$. In this configuration, we can imagine ``cutting" the circle at $\zeta$ and ``opening" it to form a linear arrangement of branes, as, for example, in Figure \ref{fig:affinetofinite}.
\begin{figure}[H]
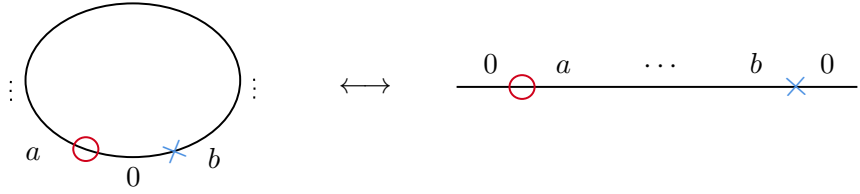

    \centering
    \include{Fig/affinetofinite}
    \caption{Equivalence between affine and finite type A bow diagrams.}
    \label{fig:affinetofinite}
\end{figure}
We notice that, a priori, there is not a unique way to open the circle into a line since there may be different segments with dimension $0$ that one could choose to cut the circle. However, for our purposes, all the possible choices will be equivalent.
\begin{Remark}
By construction, finite type A bow diagrams must always have extreme segments with dimension $0$. However, we allow extreme segments to have non-zero dimensions. Specifically, given a finite type A bow diagram, we can add an arrow connecting an extreme segment to a new wavy line with no $x$-points and dimension $0$. It is important to note that both the new arrow and the new wavy line have only one possible orientation, and the resulting diagram remains of finite type A. For our purposes, there is no difference between the original and the modified bow diagram.
\end{Remark}
\begin{Remark}
    The notations and definitions introduced earlier are not consistently used in the literature. For example, in the physics literature, what we call a ``brane system" is sometimes referred to as a ``brane diagram," while what we call a ``bow diagram" is often referred to as a ``brane diagram." Moreover, since the interpretation of simplified bow diagrams in terms of brane systems, it would seem natural to use the symbol $\vert$ instead of $\bigcirc$. However, we have opted to follow the notation in \cite{NT17}, as this choice helps to clearly distinguish between brane diagrams and bow diagrams.  Lastly, in some works on bow varieties of finite type A, bow diagrams are represented differently, replacing $\bigcirc$ with a segment of positive slope $/$ and $\times$ with a segment of negative slope $\backslash$ (see, for example, \cite{SW23}).
\end{Remark}

\subsection{Supersymmetry}
A particularly important class of brane diagrams is formed by the so-called supersymmetric brane diagrams. To define them, we need the following preliminary definitions.

The interpretation of brane diagrams as configurations of rigid $1$-dimensional affine subspaces in $\AAA^3$ (see Figure \ref{fig:finitesystem}) motivates the following definition.
\begin{Definition}\label{definitionfixed}
    Given an affine type A brane diagram, a D3-brane is said to be \textit{fixed} if it has endpoints on 5-branes of different types and \textit{unfixed} otherwise.
\end{Definition}
\begin{Definition}The \textit{winding number} of a fixed D3-brane in an affine type A brane diagram is the total number of full loops it completes.\end{Definition}
For example, in the affine type A brane diagram in Figure \ref{fig:affinesystems}, there are two D3-branes with winding number $0$ and one D3-brane with winding number $1$. Moreover, we notice that every fixed D3-brane in a finite type A diagram has winding number $0$.

In this paper, we adopt the convention that a fixed D3-brane is oriented by taking its endpoint on the NS5-brane as starting point. Therefore, it makes sense to talk about clockwise-oriented and anticlockwise-oriented fixed D3-branes in a brane diagram.
\begin{Definition}\label{definitionsupersymmetry}
A brane diagram of affine type A is \textit{supersymmetric} if, for every $t \in \mathbb{N}$ and every pair of 5-branes of different types, there is at most one clockwise-oriented D3-brane and at most one anticlockwise-oriented D3-brane connecting them, each with winding number $t$. A  bow diagram of affine type A is \textit{supersymmetric} if it originates from a supersymmetric brane diagram.
\end{Definition}
Let us illustrate the notion of supersymmetry for bow diagrams with an example. Consider a D5-brane and an NS5-brane in an affine type A brane diagram, connected by two anticlockwise oriented D3-branes and one clockwise oriented D3-brane, all having the same winding number $t$. By definition, such a configuration is not supersymmetric. Now, suppose these are the only branes \textit{breaking supersymmetry}. In this case, we can immediately conclude that the corresponding bow diagram is supersymmetric. Indeed, if we “glue” the clockwise D3-brane with one of the two anticlockwise D3-branes, we obtain a new brane diagram that is supersymmetric and which gives rise to the same bow diagram as the original brane diagram.

\begin{Definition}\label{Def-susyincrements}
Given a brane diagram, we define a \textit{supersymmetric increment} between NS5-branes (respectively, D5-branes) to be the operation of adding unfixed D3-branes whose endpoints are placed exclusively on NS5-branes (respectively, D5-branes).\end{Definition}

The term supersymmetric increment will refer to successive applications of the operations defined above.
\begin{Definition}
A bow diagram $(B,\Lambda,\vv')$ is said to be obtained from a bow diagram $(B,\Lambda,\vv)$ via a \textit{supersymmetric increment} between a pair of arrows $e,e'$ (respectively, x-points $x,x'$) if, for every segment $\zeta$, the difference $v'_\zeta - v_\zeta$ is non-negative and depends only on the arc of the bow diagram determined by $e,e'$ (respectively, by $x,x'$) that contains the segment $\zeta$.

The term supersymmetric increment will refer to successive applications of the operations defined above.
\end{Definition}

\begin{Remark}\label{Rem-susyincrements}
Let us consider a bow diagram $(B,\Lambda,\vv)$ which originates from a brane diagrams $\mathcal{B}$. Then, supersymmetric increments on $(B,\Lambda,\vv)$ correspond to supersymmetric increments on $\mathcal{B}$. Since supersymmetric increments on brane diagrams preserve supersymmetry, then also supersymmetric increments on bow diagrams preserve supersymmetry. However, although removing unfixed D3-branes from brane diagrams preserves supersymmetry, in general, we do not know how to decrease the dimension vector of a bow diagram while preserving supersymmetry. This is because the endpoints of D3-branes are not visible in bow diagrams. For this reason, we need to impose further assumptions on our bow diagrams to decrease the dimension vector in a way that is compatible with supersymmetry. See Proposition \ref{Prop.reducing} and Remark \ref{rem:susydecrements}.
\end{Remark}

\subsection{Bow varieties}\label{s.bowvarieties}
We recall the quiver description of bow varieties of affine type A introduced in \cite{NT17}.

Let us fix a bow diagram $(B,\Lambda,\vv)$ and suppose it originates from a brane diagram, i.e. $\vv\in\NN^{\IS}$. To define bow varieties we need to introduce two building blocks.
The first one is associated to every arrow of the bow. Given an arrow $e\in\EE$ we denote by $\zeta_{t(e)}$ and $\zeta_{h(e)}$ the segments, respectively, at the origin and at the end of the arrow, that is:
\begin{equation*}
    \input{Fig/arrow}
\end{equation*} We denote by $v_{t(e)}$ and $v_{h(e)}$ the corresponding entries of the dimension vector. Then, we associate to $e$ the symplectic vector space
\begin{equation*}
    \M^e=\operatorname{T}^*\Hom(\CC^{v_{t(e)}},\CC^{v_{h(e)}})=\Hom(\CC^{v_{t(e)}},\CC^{v_{h(e)}})\oplus \Hom(\CC^{v_{h(e)}},\CC^{v_{t(e)}})
\end{equation*}
whose elements will be denoted by $(C_e,D_e)$ and the second identification is given by the trace pairing.

The second space we need is associated to every x-point of the bow. Let us fix $x \in \Lambda$. We denote by $\zeta_x^-$ and $\zeta_x^+$ the adjacent segments to $x$ following the wavy line's orientation, that is:
 \begin{equation*}
     \input{Fig/AdjSeg}.
 \end{equation*}
For simplicity, we also use the notation $v_{\zeta_x^\pm}=v_x^\pm$. Then, a \textit{triangle at x} is defined as a collection of linear maps $(A_x,B_x^-,B_x^+,a_x,b_x)$ as follows:
\begin{equation*}
    \begin{tikzcd}
\CC^{v_x^-} \arrow["B_x^-", loop, distance=2em, in=55, out=125] \arrow[rr, "A_x"] \arrow[rd, "b_x"'] &                             & \CC^{v_x^+} \arrow["B_x^+", loop, distance=2em, in=55, out=125] \\
                                                                                                    & \mathbb{C} \arrow[ru, "a_x"'] &                                                                   
\end{tikzcd}.
\end{equation*}
Finally, we define the variety $\M^x$ as the collection of triangles at $x$ satisfying the following conditions:
    \begin{equation*}\label{condition-a}
        B_x^+A_x-A_xB_x^-+a_xb_x= 0. \tag{$a$}
    \end{equation*}
        \begin{equation*}\label{condition-S1}
        \text{There is no subspace }0\neq S \subset \Ker A_x \cap \Ker b_x,\text{ }B_x^-\text{-invariant.} \tag{S1}
        \end{equation*}
        \begin{equation*}\label{condition-S2}
        \text{There is no subspace }\Ima A+\Ima a \subset T \subsetneq V_{\zeta^{+}},\text{ }B_x^+\text{-invariant.}\tag{S2}
    \end{equation*}
The space $\M^x$ was introduced in \cite{Tak16}, where it is shown that it  is a symplectic affine variety.

Then, given a bow diagram $(B,\Lambda,\vv)$ we construct the symplectic affine variety:
\begin{equation*}
      \TM\coloneqq\TM(B,\Lambda,\vv)\coloneqq \prod\limits_{x \in \Lambda}\M^x \times \bigoplus\limits_{e \in \EE}\M^e.
\end{equation*}
This variety carries a Hamiltonian action of the group $\GG=\prod\limits_{\zeta\in\IS}\GL(v_\zeta)$ given by changing bases (\cite[\S2.2]{NT17}). We denote the moment map by 
\begin{equation*}
    \mu=\mu_{(B,\Lambda,\vv)}: \TM \rightarrow \Lie(\GG)=\bigoplus\limits_{\zeta\in\IS}\mathfrak{gl}(v_\zeta), \ \ \ m \mapsto \bigl(\mu(m)_\zeta\bigr)_{\zeta\in\IS}
\end{equation*}
where $\Lie(\GG)$ and $\Lie(\GG)^*$  are identified via the trace pairing. For each segment $\zeta$,  $\mu(A,B,a,b,C,D)_\zeta$ is given by:
\begin{equation}\label{momentmapequation}
\begin{aligned}
\input{Fig/mu2}.
\end{aligned}
\end{equation}
We want to define bow varieties as Hamiltonian reduction of $\TM$ by the action of $\GG$. However, without loss of generality \cite[\S4]{gaibisso2024quiver}, we will only consider stability and deformation parameters of the following types.
\begin{enumerate}
\item A \textit{deformation parameter} is an element $\lambda \in \CC^\II$ where $\CC^\II$ is embedded in $\CC^{\IS}$ via first segment of intervals and $\CC^{\IS}$ is diagonally embedded in $\Lie(\GG)$.
\item A \textit{stability parameter} is an element $\theta \in \ZZ^\II$ where $\ZZ^\II$ is embedded in the character group of $\GG$, $\ZZ^{\IS}$, via first segment of intervals. Precisely, $\theta=(\theta_\sigma) \in \ZZ^{\II}$ can be regarded as the character
$$\chi_\theta(g)=\prod\limits_{\sigma\in\II}\operatorname{det}(g_{\zeta_\sigma})^{-\theta_\sigma}$$
where $g=(g_\zeta)\in\GG$ and $\zeta_\sigma$ denotes the first segment on the wavy line $\sigma$.
\end{enumerate}
\begin{Definition}
    Given a bow diagram $(B,\Lambda,\vv)$ such that $\vv\in\NN^{\IS}$, we define the associated \textit{bow variety} with deformation parameter $\lambda$ and stability parameter $\theta$ as the Hamiltonian reduction
    \begin{equation*}
    \M=\M_{\lambda,\theta}(B,\Lambda,\vv)=\mu^{-1}(\lambda) \GIT_{\theta} \GG.
\end{equation*}
where $\GIT_{\theta}$ denotes the GIT quotient with respect to the stability parameter $\chi_\theta$.
\end{Definition}
    If $m\in\mu^{-1}(\lambda)$ and $\zeta\coloneqq\zeta_x^+=\zeta_{x'}^-$ for some $x,x'\in\Lambda$, then we define $B_\zeta\coloneqq B_{\zeta_{x}^-}=B_{\zeta_{x}^+}$.
\begin{Definition}
    Let $(B,\Lambda,\vv)$ be a bow diagram. If there exists a segment $\zeta$ such that $v_\zeta<0$, we define $\M=\M_{\lambda,\theta}(B,\Lambda,\vv)=\varnothing$, for every deformation parameter $\lambda$ and every stability parameter $\theta$.
\end{Definition}

\section{Hanany--Witten transitions and supersymmetry inequalities}\label{SectionHW}
\subsection{Hanany--Witten transitions}\label{s.hw}
Given a supersymmetric brane diagram, the Hanany--Witten transition \cite{HW} is a \textit{local} process of brane creation and annihilation that exchanges the positions of two consecutive 5-branes of different types. We can imagine moving a D5-brane through an NS5-brane. If there exists a D3-brane between them, with endpoints on these 5-branes and with winding number $0$, it is annihilated; otherwise, such a brane is created after the transition. All other D3-branes retain the same 5-branes as endpoints, and we can imagine them being lengthened or shortened. We note that Hanany--Witten transitions preserve supersymmetry of brane diagrams.

In terms of bow diagrams such a transition assumes the following form:
\begin{equation}
    \input{Fig/HWdiagrams}\label{HWprocess}
\end{equation}
In view of ~\eqref{HWprocess}, it makes sense to define Hanany--Witten transitions for every local diagram of the type:
\begin{equation}
\input{Fig/figHWcondition}\label{HWcondition}
\end{equation}
Although \cite{NT17} considers only bow diagrams that arise from brane diagrams (not necessarily supersymmetric), this definition makes sense for every bow diagram.
\begin{Remark}\label{Rem:HWandN}
     Define $N_e=v_{h(e)}-v_{t(e)}$ for any arrow $e$ and $N_x=v_x^--v_x^+$ for any x-point $x$. Given an arrow $e$ and an x-point $x$ such that $\zeta_{h(e)}=\zeta_x^-$, by definition (see \cite[Lemma 7.4]{NT17}), the Hanany--Witten transition swapping $e$ and $x$ transforms the numbers $(N_e,N_x)$ to $(N_e-1,N_x-1)$. That is:
    \begin{center}
\input{Fig/hwn}
    \end{center}
\end{Remark}
\begin{Definition}
    Two bow diagrams related by a sequence of Hanany--Witten transitions are said to be \textit{Hanany--Witten equivalent}.
\end{Definition} 
Since the relation between brane diagrams and bow diagrams of affine type A, one may wonder if Hanany--Witten transition has a counterpart in the theory of bow varieties. The answer is given in the following theorem due to Nakajima and Takayama. Note that two bow diagrams related by a sequence of Hanany--Witten transitions share the same underlying bow, and therefore the same space of stability parameters and the same space of deformation parameters (see \S\ref{s.bowvarieties}).
\begin{Theorem}[Nakajima-Takayama, \cite{NT17}, Proposition 7.1]\label{HWtransitionsforbow}
    Let $(B,\Lambda,\vv)$ and $(B,\Lambda',\vv')$ be two bow diagrams related by a sequence of Hanany--Witten transitions. Then,
    \begin{equation*}
        \M_{\lambda,\theta}(B,\Lambda,\vv) \cong \M_{\lambda,\theta}(B,\Lambda',\vv')
    \end{equation*}
for every $\lambda\in \CC^\II$ and $\theta \in \ZZ^\II$, where $\II$ is the set of wavy lines of $B$.
\end{Theorem}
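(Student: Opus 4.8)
The plan is to reduce the global statement to a single elementary Hanany--Witten move and then construct the isomorphism by explicit linear algebra. A single transition of the form \eqref{HWcondition} only alters the arrow $e$, the x-point $x$ with $\zeta_{h(e)}=\zeta_x^-$, and the segments adjacent to them; all other factors of $\TM=\prod_{x}\M^x\times\bigoplus_e\M^e$ and all other $\GL$-factors of $\GG$ are left untouched. Since the moment map \eqref{momentmapequation} is computed segment by segment and the underlying bow (hence $\II$, and the parameter spaces $\CC^\II$, $\ZZ^\II$) is unchanged, it suffices to produce a linear isomorphism of the \emph{local} data that is equivariant for the outer groups $\GL(v^-)\times\GL(v^+)$, intertwines the inner actions of $\GL(v)$ (before) and $\GL(v')$ (after), and matches moment map values and stability; such a local isomorphism then glues to a $\GG$-equivariant isomorphism of $\mu^{-1}(\lambda)$ before and after that descends to the GIT quotients $\M_{\lambda,\theta}$.

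For the local model write $V^-=\CC^{v^-}$, $V=\CC^{v}$, $V^+=\CC^{v^+}$ and $W=\CC$, with arrow maps $C_e\colon V^-\to V$, $D_e\colon V\to V^-$ and triangle maps $A_x,B_x^\pm,a_x,b_x$ as in \S\ref{s.bowvarieties}. The key observation is that the inner moment map equation forces $B_x^-=\lambda_\zeta\,\mathrm{id}-C_eD_e$, so the map
\[
\Phi=(D_e,\,A_x,\,b_x)\colon V\longrightarrow V^-\oplus V^+\oplus W
\]
has kernel satisfying $\Ker\Phi\subseteq\Ker A_x\cap\Ker b_x$, and $D_e v=0$ gives $B_x^- v=\lambda_\zeta v$, so $\Ker\Phi$ is $B_x^-$-invariant. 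Hence condition \eqref{condition-S1} forces $\Ker\Phi=0$, i.e. $\Phi$ is injective, and I set $V'\coloneqq\Coker\Phi$. The resulting exact sequence
\[
0\longrightarrow V\xrightarrow{\ \Phi\ }V^-\oplus V^+\oplus W\xrightarrow{\ q\ }V'\longrightarrow0
\]
immediately yields the Hanany--Witten dimension relation $v+v'=v^-+v^++1$ of \eqref{HWprocess}.

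Next I would define the transitioned data as (co)restrictions of this sequence: $\tilde A_x=q|_{V^-}$, $\tilde a_x=q|_{W}$, $\tilde D_e=q|_{V^+}$, together with $\tilde B_x^-=\lambda_\zeta\,\mathrm{id}-D_eC_e$, and I would define the maps out of $V'$ using the remaining data $C_e,a_x$. Concretely, the map $\bar C=(A_xC_e,\,B_x^+-\lambda_\zeta,\,a_x)\colon V^-\oplus V^+\oplus W\to V^+$ satisfies $\bar C\,\Phi=(A_xC_eD_e+B_x^+A_x+a_xb_x-\lambda_\zeta A_x)=0$ precisely by condition \eqref{condition-a}, hence it descends to $\tilde C_e\colon V'\to V^+$; the maps $\tilde b_x$ and $\tilde B_x^+$ are produced analogously. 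One then checks directly that the transitioned tuple satisfies the moment map equation \eqref{momentmapequation} at the new segments and condition \eqref{condition-a} at $x$, and that the roles of \eqref{condition-S1} and \eqref{condition-S2} are exchanged, so the stability conditions persist after the move. The inverse is the symmetric construction built from the dual surjection guaranteed by \eqref{condition-S2}. Since every map is natural in $V$, the construction is $\GL(v^-)\times\GL(v^+)$-equivariant, is constant on $\GL(v)$-orbits, and realizes the $\GL(v')=\GL(\Coker\Phi)$-action; it therefore matches the combinatorial rule of Remark~\ref{Rem:HWandN}.

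I expect the main obstacle to be the descent to the GIT quotient rather than the affine fibre: the bookkeeping of \eqref{condition-S1}--\eqref{condition-S2} under the move should be routine, but I must verify that $\chi_\theta$-(semi)stability is preserved. This requires tracking how the ``first segment'' of each wavy line, used to define $\chi_\theta$ in \S\ref{s.bowvarieties}, changes when the x-point crosses the arrow, and checking that the induced identification of characters makes the two quotients agree for every $\theta\in\ZZ^\II$ and $\lambda\in\CC^\II$. Verifying the remaining algebraic relations and that the two constructions are mutually inverse modulo the inner group actions then completes the proof.
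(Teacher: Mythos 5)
Your proposal is correct and follows essentially the same route as the proof this paper invokes rather than reproduces, namely \cite[Proposition 7.1]{NT17}: a local, reflection-functor-style construction in which the moment map equation at $\zeta$ together with \eqref{condition-S1} forces $\Phi=(D_e,A_x,b_x)$ to be injective, $V'=\Coker\Phi$ realizes the dimension rule of \eqref{HWprocess}, condition \eqref{condition-a} makes the complementary maps such as $\bar C$ descend, \eqref{condition-S1} and \eqref{condition-S2} trade places, and equivariance lets the local isomorphism glue and descend to the GIT quotient (where, as you anticipate, $\theta$-stability is checked via an intrinsic subrepresentation characterization preserved by the exact sequence). One small bonus you did not remark on: your injectivity of $\Phi$ (and the dual surjectivity of $(A'_x,a'_x,D'_e)$ forced by \eqref{condition-S2} on the transitioned side) already proves that a non-empty fibre forces $v'\geq 0$, which is exactly Lemma \ref{LEMMASW} and hence gives the extension of the theorem to bow diagrams with negative entries noted in Remark \ref{Rem:Step1}.
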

\begin{Remark}\label{Rem:Step1}
     In \cite{NT17}, the previous theorem was stated only for bow diagrams $(B,\Lambda,\vv)$, $(B,\Lambda',\vv')$ with $\vv,\vv'\in\NN^{\IS}$. However, we will see that this result extends to every bow diagram (see \S\ref{s.step1}).
\end{Remark}
In physics, there is no issue with moving a 5-brane through another brane of the same type. However, in this paper, we do not consider such moves. For instance, given a local subdiagram of the form:
\begin{equation*}
\input{Fig/moving2x}
\end{equation*}
if we want to move $\times_2$ through $\bigcirc$, we first apply the Hanany--Witten transition to move $\times_1$ through $\bigcirc$ and then the transition to move $\times_2$ through $\bigcirc$.
\subsection{Supersymmetry inequalities}
\begin{Definition}\label{Definitionseparateddiagram}A \textit{separated} bow diagram of affine type A is a bow diagram $(B,\Lambda,\vv)$ such that all the x-points are on the same wavy line.
\end{Definition}
Given an affine type A separated bow diagram, unless otherwise stated, we label the $x$-points as $1, \dots, w$ in anticlockwise order, starting from the first $x$-point on the wavy line. The arrows are labelled as $1, \dots, n$ in clockwise order, starting from the arrow ending at the origin of the wavy line containing the $x$-points. The entries of the dimension vector are labelled as follows: starting from the first segment of the wavy line with the $x$-points, we assign in anticlockwise order the dimensions $v_0, v_{-1}, \dots, v_{-w}$ to the segments in that wavy line. Then, we denote by $v_s$ the dimension on the last segment of the wavy line ending at the origin of the $s$-th arrow. In particular, $v_{-w}$ and $v_n$ denote the same dimension. See Figures \ref{StypeAffine} and \ref{StypeA}.
\begin{figure}[H]
    \centering
    \input{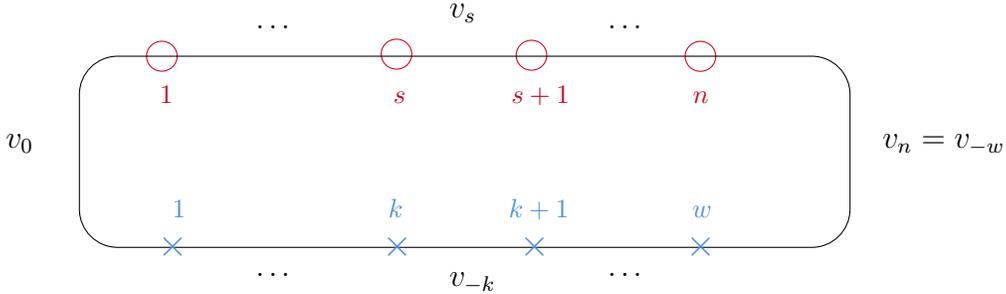}
    \caption{Separated bow diagram of affine type A.}
    \label{StypeAffine}
\end{figure}
\begin{figure}[H]
    \centering
    \input{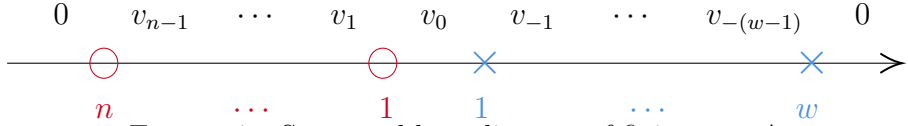}
    \caption{Separated bow diagram of finite type A.}
    \label{StypeA}
\end{figure}
\begin{Remark}
In the cobalanced case (i.e. when for every x-point $x$, $v_x^- = v_x^+$), there is a correspondence between bow diagrams and pairs of quivers and framed dimension vectors, see \cite{cherkis2011instantons}, \cite{NT17}. In this case, the number of $x$-points $w$ corresponds to the sum of the dimensions assigned to framed vertices.
\end{Remark}
\begin{Definition}\label{HWinequalities}
A separated bow diagram of affine type A satisfies the \textit{supersymmetry inequalities} if for every $s\in\{0,1,\dots,n\}$, $k\in\{0,1,\dots,w\}$ and $t\in\NN\setminus\{0\}$
\begin{equation}\label{susyinequalities}
    \begin{aligned}
        \cD_{s,k}^t= &sk+(t-1)(sw+kn)+\frac{(t-1)(t-2)}{2}wn \\
         & +v_s+v_{-k}+(t-1)v_{-w}-tv_0\geq0,\\
        \aD_{n+1-s,w+1-k}^t= & sk+(t-1)(sw+kn)+\frac{(t-1)(t-2)}{2}wn \\
         & +v_{n-s}+v_{-(w-k)}+(t-1)v_0-tv_{-w}\geq0,
        \end{aligned}
    \end{equation}
\end{Definition}
We refer to the former inequalities as the \textit{clockwise supersymmetry inequalities} and to the latter inequalities as the \textit{anticlockwise supersymmetry inequalities}.
\begin{Remark}\label{R:interpretationinequalities}
Let us briefly discuss the meaning of previous inequalities. Details will be provided in \S\ref{s.step2}. 

If $s,k>0$, $\cD^t_{s,k}$ is the dimension between $x_k$ and $\bigcirc_s$ after applying Hanany--Witten transitions to move $x_1,\dots,x_w$ clockwise through $\bigcirc_1,\dots,\bigcirc_{n}$ $(t-1)$-times and further move $x_1,\dots,x_k$ clockwise through $\bigcirc_1,\dots,\bigcirc_s$. Similar considerations hold for the anticlockwise case. That is, $\aD^t_{n+1-s,w+1-k}$ is the dimension between $x_{-(w+1-k)}$ and $\bigcirc_{n+1-s}$ after applying Hanany--Witten transitions to move $x_{-w},\dots,x_1$ anticlockwise through $\bigcirc_n,\dots,\bigcirc_1$ $(t-1)$-times and further move $x_{-w},\dots,x_{-(w+1-k)}$ anticlockwise through $\bigcirc_n,\dots\bigcirc_{n+1-s}$. 

For all $s$ and $k$, we have $\cD^1_{s,0}=\cD^0_{s,w} =v_s$ and $\cD^1_{0,k} = \cD^0_{n,k}= v_{-k}$.

Moreover, we notice that including inequalities where $sk=0$ and $t>1$ does not impose any additional condition on the dimension vector. Indeed, the definitions yield $\cD^t_{0,k} = \cD^{t-1}_{n,k}$ and $\cD^t_{s,0} = \cD^{t-1}_{s,w}$. Similarly, for the anticlockwise inequalities, we have $\aD^1_{n+1,w+1-k}=\aD^0_{1,w+1-k}= v_{-(w-k)}$ and $\aD^1_{n+1-s,w+1}=\aD^0_{n+1-s,1}=v_{n-s}$ for all $s$ and $k$. If $t>1$, then $\aD^t_{n+1,w+1-k} = \aD^{t-1}_{1,w+1-k}$ and $\aD^t_{n+1-s,w+1} = \aD^{t-1}_{n+1-s,1}$.
\end{Remark}

\begin{Remark}\label{Inequalitiesfort=0}
Although they would be well-defined, supersymmetry inequalities~\eqref{susyinequalities} for $t=0$ do not yield any new inequality. Indeed, it follows by a direct computation that
\begin{equation}\label{eq.inequalitiesfort=0} \begin{aligned} \cD^0_{n-s,w-k} &= \aD^1_{n+1-s,w+1-k}, &\quad \aD^{0}_{s+1,k+1} &= \cD^1_{s,k} \end{aligned} \end{equation} for every $s \in \{0,1,\dots,n\}$ and $k \in \{0,1,\dots,w\}$.

For a later purpose (see Appendix \ref{InverseStep2-Appendic}), let us provide an interpretation of the identities $\cD^0_{n-s,w-k} = \aD^1_{n+1-s,w+1-k}$. Consider a separated bow diagram $(B,\Lambda,\vv)$ as in Figure~\ref{StypeAffine}, and apply Hanany--Witten transitions to move all the $x$-points clockwise through all the arrows exactly $t>0$ times. The resulting bow diagram is given by:

\begin{equation*} \input{Fig/propstep2} \end{equation*}

Next, move $x_w$ anticlockwise through the segments $\bigcirc_n, \dots, \bigcirc_s$. The final diagram can equivalently be obtained by:
\begin{itemize}
   \item first moving all the $x$-points clockwise through all the arrows $(t-1)$ times,

\item then moving $x_1, \dots, x_{w-1}$ once more clockwise through all the arrows,

    \item and finally moving $x_w$ clockwise through $\bigcirc_1, \dots, \bigcirc_{s-1}$.
\end{itemize}
This leads to the diagram: \begin{equation}\label{Fig:inequalitiesfort=0} \input{Fig/inequalitiest=0} \end{equation}

The previous sequence of Hanany--Witten transitions makes sense for $t=0$. That is, suppose we only move $x_w$ anticlockwise through $\bigcirc_n,\dots,\bigcirc_s$. Then, the equalities in~\eqref{eq.inequalitiesfort=0} implies that the resulting bow diagram is given by \eqref{Fig:inequalitiesfort=0} with $t=0$. 

The remaining inequalities follow from analogous arguments, with anticlockwise transitions replacing clockwise ones.
\end{Remark}
\section{Weights of affine Lie algebras and the stratum condition}\label{SectionWandStratum}
In this section, unless otherwise stated, we assume that all bow diagrams have at least one arrow and one x-point.
\subsection{Balanced bow diagrams and weights}\label{s.balance}
We review the notion of balanced bow diagrams and their connection with weights of affine Lie algebras. Appendix \ref{s.weights} provides the necessary notation and background on affine Lie algebra weights and generalized Young diagrams. A detailed study of balanced bow varieties can be found in \cite[Section 7]{NT17}.
\begin{Definition}
    A bow diagram is said to be balanced if and only if for every arrow $e$, we have $v_{t(e)}=v_{h(e)}$.
\end{Definition}
For a later purpose, we consider x-points indexed by $i\in\ZZ/\ZZ w$, so that $x_w=x_0$.

 Let us consider a balanced bow diagram:
\begin{equation}\label{balancediagram}
    \input{Fig/balancedaffinediagram}
\end{equation}
 To such a diagram one can associate a pair of $(\Liesl_w)_{\text{aff}}$-weights (\cite[\S7.6]{NT17}) as follows:
\begin{equation*}
    \lambda=\sum\limits_{i=0}^{w-1}d_i\Lambda_i+v_0\delta, \ \ \ \mu=\sum\limits_{i=0}^{w-1}(d_i\Lambda_i-v_i\alpha_i)+v_0\delta.
\end{equation*}
This establishes a correspondence between balanced bow diagrams with $w$ x-points and $n$ arrows and pairs $(\lambda,\mu)$ of $(\Liesl_w)_{\text{aff}}$-weights of level $n$ such that: $\langle\mu,d\rangle=0$, $\lambda$ is dominant and $\lambda-\mu\in\bigoplus\limits_{i=0}^{w-1}\ZZ\alpha_i$. We will denote the balanced bow diagram in ~\eqref{balancediagram} by $\mathcal{B}(\lambda,\mu)$.

A separated bow diagram (Definition \ref{Definitionseparateddiagram}) is uniquely determined by a pair of integral vectors $\bigl([\prescript{t}{}{\lambda}_s],[\mu_i]\bigr)\in\ZZ^n\times\ZZ^w$ and a number $v\in\ZZ$ as follows. In the notation of Figure \ref{StypeAffine}, we set $\prescript{t}{}{\lambda}_{s}=N_{e_s}=v_{s-1}-v_{s}$ for every $s\in\{1,\dots,n\}$, $\mu_{i}=N_{x_i}=v_{-(i-1)}-v_{-i}$ for every $i\in\ZZ/w\ZZ$ and $v=v_n=v_{-w}$. We denote such a bow diagram by $\mathcal{B}(\prescript{t}{}{\lambda}_s,\mu_i,v)$ and will represent it as follows:

\begin{equation}\label{Fig:separatedweights}
    \input{Fig/separatedweights}
\end{equation}

 By construction, $[\prescript{t}{}{\lambda}_s]$ and $[\mu_i]$ have the same charge (see \S\ref{s.A2}): 
$$v+\sum\limits_{s=1}^{n}\prescript{t}{}{\lambda}_s=v+\sum\limits_{i=1}^{w}\mu_i.$$
\begin{Remark}\label{remarksupersymmetryforbalance}
   Every balanced bow diagram $(B,\Lambda,\vv)$ with $\vv\in\NN^{\IS}$ originates from a brane diagram with all the D3-branes stretched between D5-branes (i.e. x-points). It follows by the definition of supersymmetric bow diagrams that a balanced diagram $(B,\Lambda,\vv)$ is supersymmetric if and only if $\vv\in\NN^{\IS}$. Hence, $\mathcal{B}(\lambda,\mu)$ is supersymmetric if and only if $\lambda\geq\mu$ in the dominance order.
\end{Remark} 
\begin{Proposition}(Proposition 7.19 \cite{NT17})\label{Uniquebalancetheorem}
    There is at most one balanced bow diagram in each Hanany--Witten equivalent class of bow diagrams.
\end{Proposition}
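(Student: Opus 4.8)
The plan is to turn Hanany--Witten equivalence into a small set of combinatorial invariants and then check that the balanced condition pins all of them down. Fix two balanced diagrams $D=(B,\Lambda,\vv)$ and $D'=(B,\Lambda',\vv')$ lying in the same Hanany--Witten class. Since a sequence of transitions leaves the underlying bow unchanged, they share the same $\II$ and $\EE$, hence the same number $n$ of arrows and $w$ of x-points. By Remark \ref{Rem:HWandN} each elementary transition swaps an adjacent pair (arrow $e$, x-point $x$) and sends $(N_e,N_x)\mapsto(N_e-1,N_x-1)$; moreover arrows never cross arrows and x-points never cross x-points, so the cyclic order of the arrows among themselves, and of the x-points among themselves, is preserved, and only their interleaving changes.

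The first step is to extract invariants from this rule. I would pass to the universal cover of the circle, labelling the arrow-copies $\{E_j\}_{j\in\ZZ}$ and the x-point-copies $\{X_i\}_{i\in\ZZ}$ by consecutive integers compatibly with their fixed orders and with the period $n+w$ of one full loop. For an arrow-copy $E$ let $\ell(E)\in\ZZ$ record its position relative to the x-points (the label of the x-point immediately to its left), and for an x-point-copy $X$ let $\ell'(X)\in\ZZ$ record its position relative to the arrows. A direct check on the elementary move shows that $N_E+\ell(E)$ and $N_X-\ell'(X)$ are unchanged: crossing an x-point alters $\ell(E)$ by $\pm1$ and simultaneously $N_E$ by $\mp1$, and symmetrically for $X$. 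Hence these two families are invariants of the Hanany--Witten class.

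Next I use balancedness. In a balanced diagram every $N_e=0$, so the invariant $N_E+\ell(E)$ reduces to $\ell(E)$; applying this to both $D$ and $D'$ gives $\ell_D(E)=\ell_{D'}(E)$ for every arrow-copy, i.e. $D$ and $D'$ interleave arrows and x-points identically. With the interleaving now common to $D$ and $D'$, each $\ell'(X)$ is read off directly and agrees, so the invariant $N_X-\ell'(X)$ forces the values $N_X$ to agree as well. At this point the two balanced diagrams carry the same cyclic word and the same jumps ($N_e=0$ and the common $N_x$), so their dimension vectors can differ only by a single global additive constant $c$.

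It remains to show $c=0$, and this is the \emph{main obstacle}: the local relation $v+v'=v^-+v^++1$ is invariant under a global shift of all dimensions, so no bookkeeping of a single segment along an arbitrary path settles it. The natural way to package the anchor is through the weight correspondence of \S\ref{s.balance}: a balanced diagram determines the pair of $(\Liesl_w)_{\mathrm{aff}}$-weights $(\lambda,\mu)$ with $\lambda=\sum_i d_i\Lambda_i+v_0\delta$, where the $d_i$ encode the (now class-invariant) interleaving and $v_0$ is exactly the constant in question; the claim $c=0$ is then precisely the statement that the $\delta$-coefficient $v_0$ is a Hanany--Witten invariant. I would establish this by tracking, in the universal cover, the dimension of a segment chosen to stay fixed along a \emph{monotone} sequence of transitions that reduces the class to its balanced representative (so that $\sum_e N_e$ decreases monotonically to $0$ and one segment is never the ``middle''), thereby identifying $v_0$ with a conserved quantity such as the charge $v+\sum_i\mu_i=v+\sum_s{}^t\lambda_s$ of \eqref{Fig:separatedweights}. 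Should a clean combinatorial check prove elusive, an alternative is to invoke Theorem \ref{HWtransitionsforbow}: for $\vv,\vv'\in\NN^{\IS}$ the isomorphism $\M_{0,0}(D)\cong\M_{0,0}(D')$ constrains the shift-sensitive dimension of the associated quiver variety, ruling out $c\neq0$. Once $c=0$ we obtain $\vv=\vv'$, hence $D=D'$, which is the desired uniqueness.
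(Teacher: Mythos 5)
Your first half is essentially correct: the universal-cover bookkeeping, with the invariants $N_E+\ell(E)$ and $N_X-\ell'(X)$ obtained by iterating Remark \ref{Rem:HWandN}, does pin down the interleaving of arrows and x-points for a balanced representative and then all the differences $N_x$, so two balanced diagrams in one class can differ at most by a global shift $c$ of the dimension vector. (For calibration: the paper itself gives no proof of this proposition — it is quoted from \cite{NT17}, Proposition 7.19 — so there is no in-paper argument to match; your reduction to the constant $c$ is a reasonable self-contained route.) But the step you yourself flag as the main obstacle is a genuine gap, and neither of your two fallback sketches closes it. Sketch (a) hinges on the charge $v+\sum_i\mu_i$ being a Hanany--Witten invariant, and it is not: already for $n=w=1$, a single clockwise transition turns the separated data with segments $(v_0,v)$, whose charge is $v_0$, into a separated diagram whose charge is $v$; more generally, the computation in Example \ref{exseparatedtobalance} shows the charge shifts by $\sum_{s>n-m}\prescript{t}{}{\lambda}_s+mw$ when arrows are carried past all x-points. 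So the ``conserved quantity'' you would anchor $v_0$ to does not exist in the form proposed, and the monotone-reduction sketch is precisely the unproved content (the $\delta$-coefficient invariance is equivalent to the proposition itself, via the correspondence of \S\ref{s.balance}, under which a global shift fixes $\mu$ but moves $\lambda$ by $c\delta$).

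Sketch (b) has two problems. First, the proposition as stated covers all $\vv\in\ZZ^{\IS}$, and for balanced diagrams with negative entries every bow variety in sight is empty by definition, so Theorem \ref{HWtransitionsforbow} only yields the vacuous $\varnothing\cong\varnothing$ and detects nothing; your restriction to $\vv,\vv'\in\NN^{\IS}$ silently drops these classes, for which uniqueness still holds and requires a combinatorial argument. Second, within this paper's logical architecture you must be careful what geometric input you invoke: Proposition \ref{separatedtobalancetheorem}, and through it Step 5 of Theorem \ref{maintheorem}, rest on \cite{NT17} Propositions 7.18--7.19, so appealing to the main theorem's nonemptiness statements would be circular; you would instead need the independently known facts that balanced bow varieties are Coulomb branches (not quiver varieties, as you write --- that is the cobalanced case), which are nonempty of dimension twice the total gauge rank $\sum_i v_i$, so that a shift by $c$ changes the dimension by $2cw$. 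With that external input, sketch (b) can actually be repaired, including the negative case, by one observation you are missing: the relation $v+v'=v^-+v^++1$ is equivariant under adding a constant to every dimension, so global shifts commute with Hanany--Witten transitions; adding a sufficiently large $M$ along the given finite sequence reduces everything to nonnegative dimensions, where the dimension mismatch rules out $c\neq0$. As written, however, the proof is incomplete at its decisive step.
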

\begin{Proposition}[\cite{NT17}]\label{separatedtobalancetheorem}
  A separated bow diagram $\mathcal{B}(\prescript{t}{}{\lambda}_s,\mu_i,v)$ is Hanany--Witten equivalent to a balanced bow diagram if and only if $[\prescript{t}{}{\lambda}_s]\in\Y_n^w$ (see \S\ref{s.A3}). 
  In this case, the $(\Liesl_w)_{\text{aff}}$-weights associated with such a balanced bow diagram are given by:
\begin{equation}\label{equationintegraltoweights}
  \begin{gathered}
      \lambda=(n-\lambda_1+\lambda_w)\Lambda_0+\sum\limits_{i=1}^{w-1}(\lambda_i-\lambda_{i+1})\Lambda_i+\hat{v}\delta, \\
      \mu=(n-\mu_1+\mu_w)\Lambda_0+\sum\limits_{i=1}^{w-1}(\mu_i-\mu_{i+1})\Lambda_i
  \end{gathered}
\end{equation}
where $[\lambda_i]\in\Y_w^n$ is the transposed of $[\prescript{t}{}{\lambda}_s]$ and $\hat{v}$ is uniquely determined by the original diagram. More precisely, $\hat{v}$ is the dimension on the segment $\zeta_{x_0}^+$ in the Hanany--Witten equivalent balanced bow diagram.
\end{Proposition}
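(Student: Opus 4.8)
The plan is to deduce both claims from the balanced-to-weights dictionary of \S\ref{s.balance}, by explicitly sorting the arrows of the separated diagram into the gaps between the $x$-points via Hanany--Witten transitions. Recall that a diagram is balanced precisely when $N_e=0$ for every arrow, so that the dimension is constant across arrows and only jumps at the $x$-points; the associated weights are then read off from the numbers $d_i$ of arrows in the gap between $x_i$ and $x_{i+1}$ and from the dimensions at the $x$-points. By Remark \ref{Rem:HWandN}, a transition across an adjacent (arrow, $x$-point) pair sends $(N_{e},N_{x})$ to $(N_{e}-1,N_{x}-1)$, so passing an $x$-point across an arrow lowers that arrow's jump $\prescript{t}{}{\lambda}_s=N_{e_s}$ by one and the inverse move raises it by one. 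By Proposition \ref{Uniquebalancetheorem} a balanced representative, if it exists, is unique, so it suffices to characterize existence and to compute the weights of the (unique) balanced diagram directly from the data $(\prescript{t}{}{\lambda}_s,\mu_i,v)$.

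For sufficiency I would start from $[\prescript{t}{}{\lambda}_s]\in\Y_n^w$, pass to its transpose $[\lambda_i]\in\Y_w^n$, and run the sorting explicitly: an arrow with jump $j>0$ is brought to zero by letting exactly $j$ $x$-points cross it, which deposits it in the gap following $x_j$ (symmetrically, and cyclically using winding, for $j\le 0$ or large $|j|$). This is exactly transposition of generalized Young diagrams: the number of arrows coming to rest in gap $i$ is $\#\{s:\prescript{t}{}{\lambda}_s=i\}=\lambda_i-\lambda_{i+1}$ for $1\le i\le w-1$, while the wrap-around gap receives $d_0=n-\lambda_1+\lambda_w$; the hypothesis $[\prescript{t}{}{\lambda}_s]\in\Y_n^w$ is precisely what forces all these $d_i$ to be nonnegative, so that the sorted configuration is a genuine balanced bow diagram. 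Substituting $d_0$ and $d_i=\lambda_i-\lambda_{i+1}$ into the dictionary of \S\ref{s.balance} yields the stated $\lambda$ (with the $\delta$-shift $\hat v$ equal to the dimension that lands on $\zeta_{x_0}^+$), and a parallel bookkeeping of the dimensions at the $x$-points expresses $\mu$ through the $x$-point jumps as $\mu=(n-\mu_1+\mu_w)\Lambda_0+\sum_{i=1}^{w-1}(\mu_i-\mu_{i+1})\Lambda_i$. Uniqueness of the balanced representative makes $\hat v$ independent of the chosen sequence of transitions.

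For necessity I would argue by contraposition: the balanced-to-weights correspondence of \S\ref{s.balance} requires $\lambda$ to be dominant, i.e. all its $\Lambda$-coefficients nonnegative, which is equivalent to $[\lambda_i]$ being weakly decreasing with spread at most $n$, i.e. $[\prescript{t}{}{\lambda}_s]\in\Y_n^w$ by the level-rank duality of Appendix \ref{s.weights}. The delicate point, and the step I expect to be the main obstacle, is to make this necessity airtight against \emph{all} sequences of Hanany--Witten transitions, including those that wind $x$-points arbitrarily many times around the circle: one must check that the quantities being matched to $(\lambda,\mu)$ really are invariant under every such transition, so that no amount of winding can manufacture a balanced diagram out of data with $[\prescript{t}{}{\lambda}_s]\notin\Y_n^w$. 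Here uniqueness (Proposition \ref{Uniquebalancetheorem}) is what collapses the a priori infinitely many windings to a single obstruction, and the transpose/level-rank dictionary is what identifies that obstruction with membership in $\Y_n^w$.
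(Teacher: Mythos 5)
Your sufficiency direction is essentially correct and, in substance, reproduces the paper's Example \ref{exseparatedtobalance}: after normalizing by cyclic moves so that $w>\prescript{t}{}{\hat{\lambda}}_1\geq 0$, parking $e_s$ just after $x_{\prescript{t}{}{\hat{\lambda}}_s}$ puts $d_i=\hat{\lambda}_i-\hat{\lambda}_{i+1}=\lambda_i-\lambda_{i+1}$ arrows in gap $i$ and $d_0=n-\lambda_1+\lambda_w$ in the wrap-around gap, and nonnegativity of the $d_i$ is precisely $[\prescript{t}{}{\lambda}_s]\in\Y_n^w$. Your ``parallel bookkeeping'' at the x-points, though you leave it implicit, does close: by Remark \ref{Rem:HWandN} the jump at $x_i$ becomes $\mu_i+m-\hat{\lambda}_i$, so the $\Lambda_i$-coefficient $d_i-2v_i+v_{i-1}+v_{i+1}$ of the balanced $\mu$-weight telescopes to $\mu_i-\mu_{i+1}$ (and to $n-\mu_1+\mu_w$ for $i=0$), which is \eqref{equationintegraltoweights}; and invoking Proposition \ref{Uniquebalancetheorem} to make $\hat{v}$ independent of the chosen sequence is legitimate. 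Be aware, though, that the paper does not argue this way: its proof is a citation to \cite[Proposition 7.5, Lemma 7.18, Propositions 7.19--7.20]{NT17} for both directions, followed only by a consistency check that the pair \eqref{equationintegraltoweights} genuinely encodes a balanced diagram ($\lambda$ dominant, equal levels, $\langle\mu,d\rangle=0$, and $\lambda-\mu$ in the root lattice via \eqref{eq.changingweightbasis}) --- a check your explicit construction would make automatic.

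The genuine gap is in necessity, exactly where you flagged it, and your proposed fix does not close it. If the class contains a balanced diagram, then \emph{that diagram's} $\lambda$-weight is dominant by construction; to conclude that the \emph{original} $[\prescript{t}{}{\lambda}_s]$ lies in $\Y_n^w$ you need to know that the separated data computes this $\lambda$ by transposition, i.e.\ you need an honest HW-invariant of the equivalence class. Proposition \ref{Uniquebalancetheorem} cannot supply this: it compares balanced diagrams with one another (at most one per class), and says nothing about how an arbitrary separated representative relates to it; in particular it does not rule out, a priori, a separated representative with $[\prescript{t}{}{\lambda}_s]\notin\Y_n^w$ coexisting in the same class as a balanced diagram whose weights simply fail to match your formula. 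The missing ingredient is the invariance statement behind \cite[Lemma 7.18, Proposition 7.20]{NT17}: the Maya diagram (equivalently, the dominant $\widehat{\Liegl_w}$-weight datum) attached to a class is well defined under all Hanany--Witten transitions. Concretely, two separated representatives differ by moves in which an arrow crosses the full cycle of $w$ x-points an integral number of times together with a cyclic relabelling, e.g.\ $[\prescript{t}{}{\lambda}_1,\dots,\prescript{t}{}{\lambda}_n]\mapsto[\prescript{t}{}{\lambda}_n+w,\prescript{t}{}{\lambda}_1,\dots,\prescript{t}{}{\lambda}_{n-1}]$, and one must verify that each such move preserves membership in $\Y_n^w$ and the transposed weight up to the normalization already absorbed in \eqref{equationintegraltoweights}. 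Without proving (or citing) this lemma, your contraposition establishes only dominance of the balanced diagram's weight, not that your diagram's jump vector is a generalized Young diagram.
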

\begin{proof}
    From \cite[Proposition 7.5, Lemma 7.18, Proposition 7.20]{NT17}, we know that $\mathcal{B}(\prescript{t}{}{\lambda}_s,\mu_i,v)$ is Hanany--Witten equivalent to a balanced bow diagram if and only if $[\prescript{t}{}{\lambda}_s]\in\Y_n^w$.  The second statement follows from \cite[Lemma 7.18 and Proposition 7.19]{NT17} and general properties of generalized Young diagrams. Let us notice that $(\lambda,\mu)$ corresponds to a balanced bow diagram. Indeed, $\lambda$ is dominant, $\lambda$ and $\mu$ have the same level, $\langle\mu,d\rangle=0$ and, by ~\eqref{eq.changingweightbasis}, $\lambda-\mu\in\bigoplus\limits_{i=0}^{w-1}\ZZ\alpha_i$.
\end{proof}
\begin{Corollary}\label{Corollaryseparatedbalanced} A pair $(\lambda, \mu)$ of $(\Liegl_w)_{\text{aff}}$-weights satisfying the following conditions:
\begin{enumerate}
   \item $\lambda$ is dominant,
    \item $\lambda$ and $\mu$ have the same charge and the same level,
   \item $\langle d, \mu \rangle = 0$,
\end{enumerate}
is equivalent to the data of a separated bow diagram $\mathcal{B}(\prescript{t}{}{\lambda}_s,\mu_i,v)$ Hanany-–Witten equivalent to the balanced bow diagram corresponding to the $(\Liesl_w)_{\text{aff}}$-weights obtained by projecting $(\lambda,\mu)$.
\end{Corollary}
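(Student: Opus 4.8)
The plan is to deduce the corollary from the two $(\Liesl_w)_{\text{aff}}$-statements already at our disposal: the correspondence between balanced bow diagrams and pairs of $(\Liesl_w)_{\text{aff}}$-weights in \S\ref{s.balance}, and Proposition \ref{separatedtobalancetheorem}, which matches separated diagrams that are Hanany--Witten equivalent to a balanced one with the explicit weight formulas \eqref{equationintegraltoweights}. The only genuinely new content is to show that passing from $\Liesl_w$ to $\Liegl_w$ replaces the opaque root-lattice condition $\lambda-\mu\in\bigoplus_{i=0}^{w-1}\ZZ\alpha_i$ used there by the transparent hypotheses (2)--(3), the extra $\Liegl_w$ coordinate (the charge) recording precisely the absolute dimension $v$ that the $(\Liesl_w)_{\text{aff}}$-weights do not see.

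\textbf{First I would} set up the projection $\pi$ from $(\Liegl_w)_{\text{aff}}$-weights to $(\Liesl_w)_{\text{aff}}$-weights that forgets the trace direction, and write $\bar\lambda=\pi(\lambda)$, $\bar\mu=\pi(\mu)$. Because $\pi$ carries affine simple roots to affine simple roots and preserves both the level and the pairing with $d$, hypotheses (1) and (3) transfer verbatim: $\bar\lambda$ is dominant and $\langle\bar\mu,d\rangle=0$. The crux is the following lemma: two $(\Liegl_w)_{\text{aff}}$-weights have the same charge and the same level if and only if their difference lies in $\bigoplus_{i=0}^{w-1}\ZZ\alpha_i$. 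One inclusion is immediate, since each $\alpha_i$ has level $0$ and charge $0$; for the converse one uses that the finite $\Liegl_w$ root lattice is exactly the charge-zero sublattice $\{x\in\ZZ^w:\sum_i x_i=0\}$ and that $\delta\in\bigoplus_i\ZZ\alpha_i$, so a difference of level $0$ and charge $0$ necessarily lies in the affine root lattice. Applying $\pi$, hypothesis (2) becomes exactly the condition $\bar\lambda-\bar\mu\in\bigoplus_i\ZZ\alpha_i$ demanded in \S\ref{s.balance}.

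\textbf{Next I would} feed $(\bar\lambda,\bar\mu)$ --- now verified to satisfy dominance, common level $n$, $\langle\bar\mu,d\rangle=0$ and the root-lattice condition --- into the correspondence of \S\ref{s.balance} to obtain the unique balanced diagram $\mathcal{B}(\bar\lambda,\bar\mu)$. Separating it by Hanany--Witten transitions yields a separated diagram whose invariants $[\prescript{t}{}{\lambda}_s]$ and $[\mu_i]$ are read off from the Dynkin labels of $\bar\lambda,\bar\mu$ through Proposition \ref{separatedtobalancetheorem} and transposition of generalized Young diagrams (\S\ref{s.A3}); in particular $[\prescript{t}{}{\lambda}_s]\in\Y_n^w$ holds automatically because $\mathcal{B}(\bar\lambda,\bar\mu)$ is balanced. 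The remaining integer $v=v_n=v_{-w}$ is then pinned down by the charge: the grade $\hat v$ of $\bar\lambda$ fixes the absolute dimensions of $\mathcal{B}(\bar\lambda,\bar\mu)$, and the explicit separation procedure propagates this to $v$, which is equivalently supplied directly by the $\Liegl_w$-charge of $(\lambda,\mu)$ through the charge identity recalled after \eqref{Fig:separatedweights}.

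\textbf{Finally} I would construct the inverse. Given a separated diagram $\mathcal{B}(\prescript{t}{}{\lambda}_s,\mu_i,v)$ with $[\prescript{t}{}{\lambda}_s]\in\Y_n^w$, Proposition \ref{separatedtobalancetheorem} attaches the $(\Liesl_w)_{\text{aff}}$-weights $(\bar\lambda,\bar\mu)$ via \eqref{equationintegraltoweights}, and I lift them to $(\Liegl_w)_{\text{aff}}$-weights $(\lambda,\mu)$ by assigning the common charge dictated by $v$ and the charge identity. By construction $\lambda$ is dominant, $\lambda$ and $\mu$ share level $n$ and charge, and $\langle d,\mu\rangle=0$, so (1)--(3) hold and $\pi(\lambda)=\bar\lambda$, $\pi(\mu)=\bar\mu$; thus this lift inverts the construction above. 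I expect the main obstacle to be exactly the charge lemma of the first step together with the bookkeeping of the second: one must check that ``same charge and same level'' in the $\Liegl_w$-picture matches the root-lattice congruence of the $\Liesl_w$-picture under $\pi$, and that the single integer $v$ is recovered unambiguously from the charge rather than double-counted against the grade $\hat v$. Once this dictionary is fixed, the statement is a direct concatenation of \S\ref{s.balance} and Proposition \ref{separatedtobalancetheorem}.
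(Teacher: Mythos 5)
Your overall architecture is the same as the paper's: the corollary is obtained by concatenating the balanced-diagram correspondence of \S\ref{s.balance} with Proposition \ref{separatedtobalancetheorem}, and your explicit lemma (two $(\Liegl_w)_{\text{aff}}$-weights have the same charge and level iff their difference lies in $\bigoplus_{i=0}^{w-1}\ZZ\alpha_i$, because the charge-zero finite lattice is the $\Liesl_w$ root lattice and $\delta=\alpha_0+\cdots+\alpha_{w-1}$) is a correct and welcome unpacking of what the paper compresses into the appeal to \eqref{eq.changingweightbasis} at the end of the proof of Proposition \ref{separatedtobalancetheorem}. Up to that point your proposal is sound.

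There is, however, one genuine confusion in your bookkeeping of the integer $v$, and it appears twice. You write that $v$ is ``equivalently supplied directly by the $\Liegl_w$-charge of $(\lambda,\mu)$ through the charge identity recalled after \eqref{Fig:separatedweights}'', and in the inverse construction you ``assign the common charge dictated by $v$''. Neither can work: that identity reads $v+\sum_{s}\prescript{t}{}{\lambda}_s=v+\sum_i\mu_i$ and holds for \emph{every} $v$, so it carries no information about $v$; moreover the charge $\vert\lambda\vert=\sum_i\lambda_i=\sum_s\prescript{t}{}{\lambda}_s$ is already fixed by the integral vectors alone (blockwise transposition of Maya diagrams preserves it), so it is not a free parameter to be matched against $v$. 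The charge condition in hypothesis (2) instead corresponds, on the diagram side, to the consistency of the dimension vector around the circle ($v_0-v_n=\sum_s\prescript{t}{}{\lambda}_s$ and $v_0-v_{-w}=\sum_i\mu_i$ with $v_n=v_{-w}$), i.e.\ to the well-definedness of $\mathcal{B}(\prescript{t}{}{\lambda}_s,\mu_i,v)$, and to fixing the $\ZZ\Lambda_w$-ambiguity when lifting the projected $(\Liesl_w)_{\text{aff}}$-weights back to $(\Liegl_w)_{\text{aff}}$. What actually pins down $v$ is the $\delta$-coefficient $\hat{v}=\langle\lambda,d\rangle$ of the projected weight: by Proposition \ref{separatedtobalancetheorem} each $v$ yields a balanced diagram with a grade $\hat{v}(v)$, the assignment $v\mapsto\hat{v}(v)$ is a bijection of $\ZZ$ (e.g.\ $\hat{v}=v+\sum_{s=n+1-m}^{n}\prescript{t}{}{\kappa}_s$-type shifts as in Example \ref{exseparatedtobalance}), and $v$ is the unique integer for which $\mathcal{B}(\prescript{t}{}{\lambda}_s,\mu_i,v)$ is Hanany--Witten equivalent to the \emph{specific} balanced diagram $\mathcal{B}(\pi(\lambda),\pi(\mu))$ --- exactly as in the paper's proof. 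You in fact state this correct mechanism first (``the grade $\hat v$ \dots\ propagates to $v$''), so the repair is simply to delete the charge-based determination of $v$ in both directions; with that correction your argument goes through and coincides with the paper's.
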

\begin{proof}
 Given $(\lambda,\mu)$ as in the assumptions, for every $v\in\ZZ$ we have a well-defined bow diagram $\mathcal{B}(\prescript{t}{}{\lambda}_s,\mu_i,v)$  Hanany--Witten equivalent to a balanced bow diagram (Proposition \ref{separatedtobalancetheorem}). Using ~\eqref{equationintegraltoweights}, thanks to $(1), (2)$ and $(3)$, we have a well-defined balanced bow diagram $\mathcal{B}(\lambda,\mu)$, where, by abuse of notation, here $\lambda,\mu$ have to be regarded as $(\Liesl_w)_{\text{aff}}$-weights (Remark \ref{projectionmodulodelta}). Finally, $v$ is uniquely determined by requiring $\mathcal{B}(\prescript{t}{}{\lambda}_s,\mu_i,v)$ Hanany--Witten equivalent to $\mathcal{B}(\lambda,\mu)$. The other direction is an immediate consequence of Proposition \ref{separatedtobalancetheorem}. 
\end{proof}

\begin{Example}\label{exseparatedtobalance}
Consider a separated bow diagram $\mathcal{B}(\prescript{t}{}{\lambda}_s,\mu_i,v)$ such that
\[
[\prescript{t}{}{\lambda}_s] \in \Y_n^w \quad \text{and} \quad w > \prescript{t}{}{\lambda}_1 \geq 0.
\]
Then, $\hat{v}$ in Proposition \ref{separatedtobalancetheorem} is given by $v+\sum\limits_{s=n-m+1}^{n}\prescript{t}{}{\lambda_s}$ where $m=\vert\{s=1,\dots,n \ \vert \ \prescript{t}{}{\lambda}_s<0\}\vert$. Indeed, if we move $e_{n+1-m},\dots,e_n$ clockwise to the left of $e_1$, thanks to Remark \ref{Rem:HWandN} (we have oriented bow diagrams anticlockwise), we obtain:
    \begin{center}
        \input{Fig/Exampleweightsdiagram}
    \end{center}
Note that $-\sum\limits_{s=n-m+1}^{n}\prescript{t}{}{\lambda_s}$ is equal to the number of white boxes in the block $N=-1/2$ of the Maya diagram associated with $[\prescript{t}{}{\lambda}_s]$.
Let us denote by $[\prescript{t}{}{\hat{\lambda}_s]}$ the integral vector associated with such a configuration; that is:
\begin{equation*}
	\prescript{t}{}{\hat{\lambda}_s}=\begin{cases}
		\lambda_{n-m+s}+w & \text{if }s=1,\dots,m,\\
		\lambda_{s-m} & \text{if }s=m+1,\dots,n.
	\end{cases}
\end{equation*}
Therefore:
\begin{equation*}
    w>\prescript{t}{}{\hat{\lambda}_1}\geq\cdots\geq\prescript{t}{}{\hat{\lambda}_n}\geq 0.
\end{equation*}
Since $\prescript{t}{}{\hat{\lambda}}_1\geq\cdots\geq\prescript{t}{}{\hat{\lambda}}_n\geq0\geq\prescript{t}{}{\hat{\lambda}}_1-w$, the bow diagram is transformed into a balanced bow diagram by successive application of Hanany--Witten transitions. In particular, since $\prescript{t}{}{\hat{\lambda}}_s\geq0$ for every $s$, we need to move the s-th arrows in the new configuration anticlockwise through $\prescript{t}{}{\hat{\lambda}}_{s}$ x-points. Since $w > \prescript{t}{}{\hat{\lambda}}_1$, we never cross $x_0$ and, therefore, the claim.
\end{Example}

Let us consider the finite case. As expected, in this case we only need weights of Lie algebras of finite type. 

Given a balanced bow diagram of finite type A
\begin{equation}\label{finitebalanced}
\input{Fig/finitebalanced}
\end{equation}
we define two $\Liegl_w$-weights by:
\begin{align}\label{weightsfinitetype}
    \lambda=\sum\limits_{i=1}^{w}d_i\Lambda_i, & & \mu=\sum\limits_{i=1}^{w-1}(d_i\Lambda_i-v_i\alpha_i)+d_w\Lambda_w. 
\end{align}
This gives a one-to-one correspondence between balanced bow diagrams of finite type A with $n$ arrows and $w$ x-points and pairs of $\Liegl_w$-weights $(\lambda,\mu)$, such that $\lambda$ is polynomial dominant, the level of $\lambda$ is at most $n$ and they have the same charge. 

With respect to the canonical basis for the Cartan subalgebra of $\Liegl_w$ given by the diagonal matrices, the weights in ~\eqref{weightsfinitetype} correspond to the integral vectors $[\lambda_i],[\mu_i]\in\ZZ^w$ defined as follows:
\begin{align*}
  &\lambda_i=\sum\limits_{j=i}^{w}d_j & i=1,\dots,w & &\\  &\mu_i=v_{w-1}+d_w+\sum\limits_{j=i}^{w-1}u_j & i=1,\dots,w-1, & &
  \mu_w=v_{w-1}+d_w
\end{align*}
with $u_i=d_i-2v_i+v_{i-1}+v_{i+1}$. See \S\ref{s.A1} for more details.

This provides us with a correspondence between balanced finite type A bow diagrams with $n$ arrows and $w$ x-points and pairs of integral vectors $([\lambda_i],[\mu_i])\in\ZZ^w\times\ZZ^w$ such that $[\lambda_i]$ is a usual Young diagram (with $w$ rows), $n\geq\lambda_1$ and $\sum\limits_{i=1}^{w}\lambda_i=\sum\limits_{i=1}^{w}\mu_i$. By considering $[\lambda_i]$ as a Young diagram of $\sum\limits_{i=1}^{w}\lambda_i$ boxes into a block of size $w\times n$ and transposing it, the previous data is equivalent to a pair of integral vectors $([\prescript{t}{}{\lambda}_s],[\mu_i])\in\ZZ^n\times\ZZ^w$ such that $[\prescript{t}{}{\lambda}_s]$ is a (usual) Young diagram with $n$ rows, $w\geq\prescript{t}{}{\lambda}_1$ and $\sum\limits_{s=1}^{n}\prescript{t}{}{\lambda}_s=\sum\limits_{i=1}^{w}\mu_i$.

Finally, such a pair of data is equivalent to giving the separated bow diagram:
\begin{equation}\label{finitebalancedseparated}
\input{Fig/finitebalancedseparated}
\end{equation}
It follows by construction that the bow diagram ~\eqref{finitebalancedseparated} can be obtained from the bow diagram ~\eqref{finitebalanced} by applying Hanany--Witten transitions to move all the arrows to the left of $x_1$.

\begin{Remark}
There is a slight difference between our construction and the approach taken in \cite{NT17} to provide a correspondence between finite type A balanced bow diagrams and weights of a finite type Lie algebra. However, if we project our $\mathfrak{gl}_w$-weights $(\lambda, \mu)$ onto the lattice of $\mathfrak{sl}_w$-weights, we recover the weights used in \cite{NT17}. Specifically, they consider balanced bow diagrams ~\eqref{finitebalanced} with $n = d_1 + \cdots + d_{w-1}$. This assumption is not restrictive, as the additional arrows we introduce do not affect bow varieties. However, keeping track of these extra arrows will be useful for our purposes.
\end{Remark}

\subsection{The stratum condition}\label{s.stratumcriterion}
One of the primary objectives of this paper is to characterize bow diagrams that generate non-empty bow varieties. It is well-known \cite[Section 4]{NT17} that bow varieties are naturally stratified by the conjugacy classes of stabilizers in the sense of Sjamaar—Lerman \cite{SL91}. For balanced bow diagrams, Nakajima and Takayama \cite[Theorem 7.26]{NT17} parametrized the strata of $\M_{0,0}$ in terms of the weights of affine Lie algebras, a result that can be generalized to all bow diagrams of affine type A \cite[Remark 7.27]{NT17}. In this subsection, we introduce a condition for bow diagrams, which can be regarded as an equivalent condition for the existence of a stratum in $M_{0,0}$ (Remark \ref{Rem:stratumconditionexistencestratum}). While we will not provide a detailed proof of such an equivalence, we will demonstrate in \S\ref{s.step5} that this condition is equivalent to supersymmetry inequalities (Definition \ref{Def-susyincrements}). Additionally, we will discuss the ``S-dual” of this condition in Remark \ref{remarkSdualstratumcondition}.

\begin{Definition}\label{def:stratumcondition} We say that a bow diagram satisfies the \textit{stratum condition} if any Hanany--Witten equivalent separated bow diagram $\mathcal{B}(\prescript{t}{}{\lambda}_s,\mu_i,v)$ admits a supersymmetric subdiagram $\mathcal{B}(\prescript{t}{}{\kappa}_s,\mu_i,v')$ which is Hanany--Witten equivalent to a balanced bow diagram.
\end{Definition}
\begin{Remark}
For bow diagrams without arrows or without x-points, the definition of the stratum condition is still well-defined and it is always trivially satisfied.
\end{Remark}

By definition, Hanany--Witten transitions preserve the stratum condition. It is important to note that the stratum condition is not equivalent to requiring that $\mathcal{B}(\prescript{t}{}{\lambda}_s,\mu_i,v)$ admits a supersymmetric subdiagram that is Hanany–Witten equivalent to a balanced one. The key point is that we cannot modify $[\mu_i]$.

\begin{Remark}\label{Rem:stratumconditionexistencestratum}
Let us briefly explain the equivalence between the stratum condition and the existence of a stratum in $ \mathcal{M}_{0,0}$. 

Consider a separated bow diagram $\mathcal{B}(\prescript{t}{}{\lambda}_s, \mu_j, v)$ and let us denote by $\mu$ the $(\Liegl_w)_{\text{aff}}$-weight given by $[\mu_i]$ such that $\langle\mu,d\rangle=0$. Nakajima and Takayama's argument \cite[Theorem 7.26, Remark 7.27]{NT17} provides us with a description of strata in terms of pairs $(\underline{k},\kappa)$ where $ \underline{k}$ is a partition and $\kappa$ is a $(\Liegl_w)_{\text{aff}}$-weight such that:
\begin{enumerate}
    \item $\kappa$ is dominant
    \item $\kappa$ and $\mu$ have the same charge and the same level
    \item $\kappa\geq\mu$ in the dominance order
    \item The corresponding separated bow diagram $\mathcal{B}(\prescript{t}{}{\kappa}_s,\mu_i,v')$ (see Corollary \ref{Corollaryseparatedbalanced}) is a subdiagram of $\mathcal{B}(\prescript{t}{}{\lambda}_s, \mu_j, v-|\underline{k}|)$.
\end{enumerate}

By definition, the stratum condition is equivalent to require a pair as above such that $|\underline{k}|=0$. However, in general, not all the pairs $(\underline{k},\kappa)$ as above corresponds to a stratum. The existence of a stratum for the pair $(\underline{k},\kappa)$ is equivalent to the non-emptiness of the stable locus of the bow variety $\M_{0,0}$ associated with the balanced bow diagram $\mathcal{B}(\kappa,\mu)$, where here we are considering $\kappa,\mu$ as $(\Liesl_w)_{\text{aff}}$-weights. See \cite[Theorem 7.26]{NT17}.

We claim that the stratum condition is equivalent to the existence of a stratum.

If $n,w>1$, every pair $(\underline{k},\kappa)$ as defined above corresponds to a stratum and so the claim is proved.

Suppose $ n > 1 $ and $ w = 1 $. In this case, we have a one-to-one correspondence between strata and pairs $(\underline{k},\kappa)$ as above satisfying the further assumption $\kappa=\mu$. This is equivalent to require that the bow subdiagram $\mathcal{B}\left( \prescript{t}{}{\kappa}_s, \mu_j, v' \right)$ is Hanany--Witten equivalent to a (balanced) bow diagram with dimension vector $0$. However, since $w=1$, the existence of such a subdiagram is equivalent to the stratum condition.

If $n = 1$, then the bow diagram is Hanany--Witten equivalent to a balanced one (see Remark \ref{Rem:HWandN}). In this case, both the stratum condition and the existence of a stratum clearly reduce to supersymmetry. 

In the case $w=1$, the bow diagram is Hanany--Witten equivalent to a cobalanced one and the existence of a stratum is easily reducible to supersymmetry (see Remark \ref{Remarksusyforco-balanced}). Indeed, the case $n=1$ and the case $w=1$ are related by S-duality which preserves supersymmetry (see Remark \ref{remarkSdualstratumcondition} for details).
\end{Remark}

As for supersymmetry and supersymmetry inequalities, we will translate the stratum condition for separated bow diagrams into a numerical criterion.
Let us notice that every bow diagram is Hanany--Witten equivalent to a separated bow diagram $\mathcal{B}(\prescript{t}{}{\lambda},\mu,v')$ such that $w >\vert\prescript{t}{}{\lambda}\vert\geq 0$. This will be proved later, see Lemma \ref{Lemma.reducing}.
\begin{Lemma}\label{Affinestratumcondition}
     Consider a separated bow diagram $\mathcal{B}(\prescript{t}{}{\lambda}_s,\mu_i,v)$ satisfying the condition $w >\vert\prescript{t}{}{\lambda}\vert\geq 0$. Then, it satisfies the stratum condition if and only if there exists a dominant $(\Liegl_w)_{\text{aff}}$-weight $\kappa$ of level $n$ such that 
    \begin{enumerate}
        \item $\vert \kappa \vert=\vert \mu\vert$,
        \item $\kappa \geq \mu$,
        \item $\prescript{t}{}{\kappa}+\bigl(\sum\limits_{s=n+1-m}^{n}\prescript{t}{}{\kappa}_s\bigr)\delta \geq \prescript{t}{}{\lambda}$, where $m=\vert\{s=1,\dots,n \ \vert\ \prescript{t}{}{\kappa}_s<0\}\vert$ and $\prescript{t}{}{\lambda}$ is the weight given by $[\prescript{t}{}{\lambda}_s]$ with $\langle\prescript{t}{}{\lambda},d\rangle=-v$.
    \end{enumerate}
\end{Lemma}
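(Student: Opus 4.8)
The plan is to unpack the stratum condition for the given separated diagram and match each of its ingredients with conditions (1)--(3). Since Hanany--Witten transitions preserve the stratum condition, it suffices to test Definition \ref{def:stratumcondition} on the single diagram $\mathcal{B}(\prescript{t}{}{\lambda}_s,\mu_i,v)$ itself, so I must decide when there is a supersymmetric subdiagram $\mathcal{B}(\prescript{t}{}{\kappa}_s,\mu_i,v')$ that is Hanany--Witten equivalent to a balanced bow diagram. The first step is to record that such subdiagrams are governed entirely by weight data: by Proposition \ref{separatedtobalancetheorem} the subdiagram is Hanany--Witten equivalent to a (unique, by Proposition \ref{Uniquebalancetheorem}) balanced diagram $\mathcal{B}(\kappa,\mu)$; by Remark \ref{remarksupersymmetryforbalance} its supersymmetry is equivalent to $\kappa\geq\mu$; and by Corollary \ref{Corollaryseparatedbalanced} the $(\Liegl_w)_{\mathrm{aff}}$-weights $(\kappa,\mu)$, with $\kappa$ dominant, $\langle d,\mu\rangle=0$, and $\kappa,\mu$ of the same level $n$ and the same charge, recover the subdiagram together with a \emph{uniquely determined} $v'$. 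This produces conditions (1) (equal charge, $|\kappa|=|\mu|$) and (2) ($\kappa\geq\mu$), and reduces the problem to identifying the remaining requirement, namely that $\mathcal{B}(\prescript{t}{}{\kappa}_s,\mu_i,v')$ genuinely be a subdiagram, with condition (3).

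The second and main step is therefore to translate the inequalities $v'_\zeta\leq v_\zeta$ into the affine dominance relation (3). Writing out the dimensions along the x-point wavy line and along the arrows, the shared vector $[\mu_i]$ makes every x-point segment contribute the single inequality $v'_0\leq v_0$, while the arrow segments contribute the partial-sum inequalities
\[
\sum_{r=1}^{s}\bigl(\prescript{t}{}{\lambda}_r-\prescript{t}{}{\kappa}_r\bigr)\leq v_0-v'_0=v-v' \qquad (s=0,1,\dots,n).
\]
Because $\kappa$ and $\mu$ share the charge, the extreme cases $s=0,n$ both reduce to $v\geq v'$, so this is an affine (wrap-around) family of partial-sum comparisons. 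The content of the step is that this family is exactly the statement $\prescript{t}{}{\kappa}+\bigl(\sum_{s=n+1-m}^{n}\prescript{t}{}{\kappa}_s\bigr)\delta\geq\prescript{t}{}{\lambda}$ in the dominance order of affine weights, once $\prescript{t}{}{\lambda}$ is normalised by $\langle\prescript{t}{}{\lambda},d\rangle=-v$. Here I would use the description of dominance for weights attached to integral vectors from Appendix \ref{s.weights}: dominance in the finite directions is precisely the partial-sum comparison above, while the coefficient of $\delta$ records the difference of the $d$-gradings. The shift $\bigl(\sum_{s=n+1-m}^{n}\prescript{t}{}{\kappa}_s\bigr)\delta$, with $m=|\{s:\prescript{t}{}{\kappa}_s<0\}|$, is exactly the correction computed in Example \ref{exseparatedtobalance} relating the base dimension $v'$ of a separated diagram to the dimension $\hat v$ of its balanced form; substituting it makes the $\delta$-coefficient equal to $v-v'$ and lines the two descriptions up. The hypothesis $w>|\prescript{t}{}{\lambda}|\geq0$ is what puts the diagram in the normal form in which Example \ref{exseparatedtobalance} applies.

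With this dictionary in place both implications are immediate. For the forward direction, the stratum condition supplies a supersymmetric subdiagram Hanany--Witten equivalent to a balanced one; passing to its weight $\kappa$ yields (1) and (2) from the first step, and the subdiagram inequalities give (3) via the second. For the converse, a dominant level-$n$ weight $\kappa$ satisfying (1) produces, through Corollary \ref{Corollaryseparatedbalanced}, a well-defined separated diagram $\mathcal{B}(\prescript{t}{}{\kappa}_s,\mu_i,v')$ Hanany--Witten equivalent to $\mathcal{B}(\kappa,\mu)$; condition (2) makes it supersymmetric by Remark \ref{remarksupersymmetryforbalance}, and condition (3) makes it a subdiagram by the second step, so the stratum condition holds for $\mathcal{B}(\prescript{t}{}{\lambda}_s,\mu_i,v)$ and hence for its whole Hanany--Witten class.

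I expect the main obstacle to be precisely the second step: verifying that the naive per-segment inequalities $v'_\zeta\leq v_\zeta$ coincide with the affine dominance (3). The delicate points are the affine wrap-around, which forces the $\delta$-shift and makes the term $\sum_{s=n+1-m}^{n}\prescript{t}{}{\kappa}_s$ appear rather than a plain difference of charges, and the fact that $v'$ is not free but pinned down by Hanany--Witten equivalence to $\mathcal{B}(\kappa,\mu)$; keeping the $d$-gradings of $\prescript{t}{}{\lambda}$ and $\prescript{t}{}{\kappa}$ mutually consistent throughout is where the bookkeeping is most error-prone.
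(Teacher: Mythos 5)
Your proposal is correct and follows essentially the same route as the paper: you match conditions (1)--(2) with a supersymmetric separated diagram $\mathcal{B}(\prescript{t}{}{\kappa}_s,\mu_i,v')$ Hanany--Witten equivalent to a balanced one via Corollary \ref{Corollaryseparatedbalanced} and Remark \ref{remarksupersymmetryforbalance}, and you identify condition (3) with the per-segment subdiagram inequalities through the partial-sum reduction and the identity $\hat{v}-\sum_{s=n+1-m}^{n}\prescript{t}{}{\kappa}_s=v'$ supplied by Example \ref{exseparatedtobalance}. The only step you leave implicit, which the paper spells out, is that Example \ref{exseparatedtobalance} requires $w>\prescript{t}{}{\kappa}_1\geq0$ for the \emph{subdiagram}, deduced from the hypothesis via $w>\vert\prescript{t}{}{\lambda}\vert=\vert\prescript{t}{}{\kappa}\vert\geq \prescript{t}{}{\kappa}_1+(n-1)\prescript{t}{}{\kappa}_n\geq n\prescript{t}{}{\kappa}_1-(n-1)w$.
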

\begin{proof}
    From \S\ref{s.balance}, we know that $\kappa,\mu$ as above satisfying conditions $(1)$ and $(2)$ is equivalent to have a separated bow diagram $\mathcal{B}(\prescript{t}{}{\kappa}_s,\mu_i,v')$
    \begin{equation*}
        \input{Fig/separateddiagramstratum}
    \end{equation*}
which is supersymmetric and Hanany--Witten equivalent to a balanced bow diagram. We conclude if we show that, under the previous assumption, the new bow diagram satisfies condition $(3)$ if and only if it is a subdiagram of $\mathcal{B}(\prescript{t}{}{\lambda}_s,\mu_i,v)$. We define $\hat{v}=\langle\kappa,d\rangle$. Since we are dealing with weights having the same charge, condition $(3)$ means:
\begin{equation*}
 \begin{aligned}   \Bigl(\hat{v}-\sum\limits_{s=n+1-m}^{n}\prescript{t}{}{\kappa}_s\Bigr)+\sum\limits_{i=j}^{n}\prescript{t}{}{\kappa_i}\leq v+\sum\limits_{i=j}^{n}\prescript{t}{}{\lambda}_i & & j\in\{1,\dots,n\}.
 \end{aligned}
\end{equation*}
The claim follows by proving that $\hat{v}-\sum\limits_{s=n=1-m}^{n}\prescript{t}{}{\kappa}_s=v'$. Indeed, $\vert\prescript{t}{}{\lambda}\vert\geq0$ implies that $\prescript{t}{}{\kappa_1}\geq0$. In addition, we have:
\begin{equation*}
    w>\vert\prescript{t}{}{\lambda}\vert=\vert\prescript{t}{}{\kappa}\vert\geq \prescript{t}{}{\kappa}_1+(n-1)\prescript{t}{}{\kappa}_n\geq n\prescript{t}{}{\kappa}_1-(n-1)w.
\end{equation*}
Therefore, $w>\prescript{t}{}{\kappa}_1\geq0$. The claim follows from Example \ref{exseparatedtobalance}.
\end{proof}

For later purpose, let us provide a numerical characterization of the stratum condition for separated bow diagrams of finite type.

\begin{Lemma}
Let us consider a separated bow diagram of finite type A as in ~\eqref{finitebalancedseparated}. Let $\prescript{t}{}{\lambda}$ and $\mu$ be, respectively, the $\Liegl_w$ and $\Liegl_n$-weights corresponding to $[\prescript{t}{}{\lambda}_s]$ and $[\mu_j]$. Then, the diagram satisfies the stratum condition if and only if there exists a polynomial dominant $\Liegl_w$-weight, $\kappa$ such that 
\begin{enumerate}
    \item $\vert \kappa\vert=\vert\mu\vert$ and $n\geq\kappa_1$
    \item $\kappa\geq\mu$
    \item $\prescript{t}{}{\kappa}\geq\prescript{t}{}{\lambda}$
\end{enumerate}
\end{Lemma}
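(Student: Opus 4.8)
The plan is to follow the strategy of the proof of Lemma~\ref{Affinestratumcondition}, exploiting the simplifications that occur in finite type. By Definition~\ref{def:stratumcondition} together with the Hanany--Witten invariance of the stratum condition, it suffices to show that the given separated diagram \eqref{finitebalancedseparated} admits a supersymmetric subdiagram $\mathcal{B}(\prescript{t}{}{\kappa}_s,\mu_i,v')$ that is Hanany--Witten equivalent to a balanced bow diagram if and only if there exists a polynomial dominant $\Liegl_w$-weight $\kappa$ satisfying $(1)$, $(2)$ and $(3)$. I would split this into two independent equivalences: first, that conditions $(1)$ and $(2)$ encode the existence of such a supersymmetric balanced-convertible diagram with $x$-point differences $[\mu_i]$; second, that condition $(3)$ encodes the subdiagram relation $v'_\zeta \le v_\zeta$.

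For the first equivalence I would invoke the finite-type correspondence of \S\ref{s.balance}: balanced finite type A bow diagrams with $n$ arrows and $w$ $x$-points are in bijection with pairs of $\Liegl_w$-weights $(\kappa,\mu)$ such that $\kappa$ is polynomial dominant, $n\ge\kappa_1$ and $|\kappa|=|\mu|$, and the separated representative of $\mathcal{B}(\kappa,\mu)$ is precisely $\mathcal{B}(\prescript{t}{}{\kappa}_s,\mu_i,v')$ (this is the passage from \eqref{finitebalanced} to \eqref{finitebalancedseparated}). Thus a $\kappa$ satisfying $(1)$ is exactly the datum of a separated diagram with $x$-point differences $[\mu_i]$ that is Hanany--Witten equivalent to a balanced one; Remark~\ref{remarksupersymmetryforbalance}, specialised to finite type (a balanced diagram $\mathcal{B}(\kappa,\mu)$ is supersymmetric, i.e.\ has nonnegative dimension vector, iff $\kappa\ge\mu$ in the dominance order), then identifies supersymmetry of this subdiagram with condition $(2)$.

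For the second equivalence I would compute the two dimension profiles directly. Since both $\mathcal{B}(\prescript{t}{}{\kappa}_s,\mu_i,v')$ and $\mathcal{B}(\prescript{t}{}{\lambda}_s,\mu_i,v)$ are separated finite type diagrams with the same $x$-point differences $[\mu_i]$ and with both extreme segments of dimension $0$, their dimension vectors coincide along the entire $x$-point side (the base dimension equals $|\mu|$ for both), while along the arrow side the dimension at the $j$-th segment equals $|\mu|-\sum_{s=1}^{j}\prescript{t}{}{\lambda}_s$, respectively $|\mu|-\sum_{s=1}^{j}\prescript{t}{}{\kappa}_s$, using $|\kappa|=|\mu|=|\lambda|$. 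Hence $v'_\zeta\le v_\zeta$ for every segment $\zeta$ is equivalent to $\sum_{s=1}^{j}\prescript{t}{}{\kappa}_s\ge\sum_{s=1}^{j}\prescript{t}{}{\lambda}_s$ for all $j$, that is, to $\prescript{t}{}{\kappa}\ge\prescript{t}{}{\lambda}$ in the dominance order, which is condition $(3)$. Combining the two equivalences yields the lemma.

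The main obstacle I anticipate is bookkeeping rather than conceptual: one must fix orientations and indexing so that the partial-sum formula for the arrow-side dimensions is correct, and verify that the $x$-point-side dimensions genuinely match, which relies on the two diagrams sharing the same base dimension, guaranteed by $|\kappa|=|\mu|$. It is also worth recording, as a consistency check with Lemma~\ref{Affinestratumcondition}, that because $\kappa$ is polynomial dominant all parts $\prescript{t}{}{\kappa}_s$ are nonnegative, so the integer $m$ appearing there vanishes and the $\delta$-correction in the affine version of condition $(3)$ disappears, leaving exactly $\prescript{t}{}{\kappa}\ge\prescript{t}{}{\lambda}$; this explains why the finite-type criterion is strictly simpler than its affine counterpart.
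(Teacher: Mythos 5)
Your proposal is correct and follows essentially the same route as the paper's proof: conditions $(1)$ and $(2)$ are translated via the finite-type correspondence of \S\ref{s.balance} (together with Remark \ref{remarksupersymmetryforbalance}) into the existence of a supersymmetric separated subdiagram $\mathcal{B}(\prescript{t}{}{\kappa}_s,\mu_i,v')$ Hanany--Witten equivalent to a balanced diagram, and condition $(3)$ is identified with the subdiagram relation, which the paper dispatches as ``a direct computation'' and which your partial-sum calculation $v_j=|\mu|-\sum_{s=1}^{j}\prescript{t}{}{\lambda}_s$ makes explicit, correctly using $|\kappa|=|\mu|=|\prescript{t}{}{\lambda}|$ to see the $x$-point-side dimensions coincide. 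Your closing observation that polynomial dominance forces $m=0$ in Lemma \ref{Affinestratumcondition}, killing the $\delta$-correction, is a valid consistency check consonant with the paper.
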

\begin{proof}
   Let us notice that $(1)$ means $\kappa$ and $\mu$ have the same charge and the level of $\kappa$ is less than or equal to $n$. It follows from \S\ref{s.balance} that the existence of a polynomial dominant weight $\kappa$ satisfying condition $(1)$ and $(2)$ is equivalent to the existence of a separated diagram:
    \begin{equation*}
        \input{Fig/finitebalancedseparatedsubdiagram}
    \end{equation*}
which is supersymmetric and Hanany--Witten equivalent to a balanced bow diagram. Since $\vert\prescript{t}{}{\lambda}\vert=\vert\kappa\vert=\vert\prescript{t}{}{\kappa}\vert$, it follows by a direct computation that condition $(3)$ is equivalent to require that it is a subdiagram of ~\eqref{finitebalancedseparated}.
\end{proof}
\begin{Remark}
The stratum condition for bow diagrams Hanany--Witten equivalent to a balanced bow diagram can be further simplified. Suppose to be in the finite case and $[\prescript{t}{}{\lambda}_s]$ is a Young diagram, we can rewrite conditions $(1)$, $(2)$ and $(3)$ into $\lambda\geq\kappa\geq\mu$. Indeed, by the definition of dominance order on Young diagrams (with the same number of blocks), we have that $\prescript{t}{}{\kappa}\geq\prescript{t}{}{\lambda}$ is equivalent to $\kappa\leq\lambda$. These inequalities are exactly those that appear in the stratification of finite type A balanced bow varieties \cite[Theorem 7.13]{NT17}. Similarly, suppose to have an affine type A bow diagram which is Hanany--Witten equivalent to a balanced bow diagram $\mathcal{B}(\lambda,\mu)$. Then the stratum condition is equivalent to the existence of a dominant weight $\kappa$ such that $\lambda\geq\kappa\geq\mu$ as in \cite[Theorem 7.26]{NT17}.
\end{Remark}
\section{The proof of the main theorem}\label{section3}
In this section, we prove the main theorem of the paper. We begin by stating the theorem and providing an outline of the proof.
\begin{Theorem}\label{maintheorem}
    Given a bow diagram $(B,\Lambda,\vv)$ of affine type A, the following are equivalent.
\begin{enumerate}[label=(\alph*)]
\item $\M_{0,0} \neq \varnothing$.
    \item There exists $(\lambda,\theta) \in \CC^\II\times\ZZ^\II$ such that $\M_{\lambda,\theta}\neq\varnothing$.
    \item Every Hanany--Witten equivalent bow diagram originates from a brane diagram.
    \item Any Hanany--Witten equivalent separated bow diagram satisfies supersymmetry inequalities.
    \item It is supersymmetric.
    \item It satisfies the stratum condition.
\end{enumerate}
\end{Theorem}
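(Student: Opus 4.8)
The plan is to prove the cyclic chain $(a)\Rightarrow(b)\Rightarrow(c)\Rightarrow(d)\Rightarrow(e)\Rightarrow(a)$, so that $(a)$--$(e)$ become equivalent, and then to attach $(f)$ through the purely numerical equivalence $(d)\Leftrightarrow(f)$ (Step 5). I would dispose of the easy implications first. The implication $(a)\Rightarrow(b)$ is trivial, taking $\lambda=0$, $\theta=0$. For Step 1, $(b)\Rightarrow(c)$, I would invoke the Hanany--Witten invariance of bow varieties, Theorem \ref{HWtransitionsforbow}, in the form extended to arbitrary dimension vectors (Remark \ref{Rem:Step1}): if $\M_{\lambda,\theta}(B,\Lambda,\vv)\neq\varnothing$ and $(B,\Lambda',\vv')$ is Hanany--Witten equivalent, then the two bow varieties are isomorphic, hence the latter is also nonempty; since any diagram carrying a negative dimension has empty bow variety by definition, every Hanany--Witten equivalent diagram must satisfy $\vv'\in\NN^{\IS}$, i.e.\ originate from a brane diagram. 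Step 2, $(c)\Rightarrow(d)$, is then immediate from Remark \ref{R:interpretationinequalities}: the integers $\cD^t_{s,k}$ and $\aD^t_{n+1-s,w+1-k}$ are exactly the dimensions appearing on prescribed segments of specific Hanany--Witten equivalent separated diagrams, so $(c)$ forces their nonnegativity, which is precisely the supersymmetry inequalities \eqref{susyinequalities}.

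The substance lies in Steps 3, 4 and 5. For Step 3, $(d)\Rightarrow(e)$, I would reconstruct a supersymmetric brane diagram directly from a separated bow diagram obeying the inequalities. The conditions $\cD^t_{s,k}\geq 0$ and $\aD^t_{n+1-s,w+1-k}\geq 0$ should be read as the statement that, for each ordered pair of 5-branes of opposite type and each winding number $t$, the dimension profile is realisable using at most one clockwise and at most one anticlockwise fixed D3-brane; assigning the remaining (necessarily unfixed) D3-branes endpoints on like-type 5-branes then produces a brane diagram whose fixed D3-branes obey Definition \ref{definitionsupersymmetry}. I expect the bookkeeping to reduce, after cutting at a zero segment, to the finite type A case, with the affine winding contributions $(t-1)(sw+kn)+\tfrac{(t-1)(t-2)}{2}wn$ in \eqref{susyinequalities} encoding the loops.

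Step 4, $(e)\Rightarrow(a)$, I expect to be the main obstacle, and the place where the two moves --- Hanany--Witten transitions and supersymmetric increments --- do the decisive work. Starting from a supersymmetric brane diagram, the strategy is to strip away unfixed D3-branes (reverse supersymmetric increments on the brane side, legitimate by Remark \ref{Rem-susyincrements}) down to a minimal supersymmetric core, then to construct an explicit point of $\M_{0,0}$ for this core, and finally to propagate nonemptiness back to the original diagram. Propagation uses Corollary \ref{corollarysupersymmetricincrementsforbow}, whereby supersymmetric increments carry nonempty bow varieties to nonempty bow varieties, together with the isomorphism invariance of $\M_{0,0}$ under Hanany--Witten transitions (Theorem \ref{HWtransitionsforbow}). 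Two difficulties must be confronted. First, since the stability conditions (S1), (S2) forbid the zero point whenever adjacent dimensions are positive, producing even a single point of $\M_{0,0}$ for the core is nontrivial and must be done explicitly, subject to the triangle conditions at each x-point and the vanishing moment map \eqref{momentmapequation}; reducing the core to finite type A, where such points can be assembled from known quiver--variety data, is the natural route. Second, one must verify that every supersymmetric diagram is genuinely reachable from its core by supersymmetric increments, and this is exactly where the hypothesis that each opposite pair carries at most one fixed brane per orientation and winding is used.

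Finally, for Step 5 I would establish $(d)\Leftrightarrow(f)$ numerically, so that $(f)$ joins the equivalence class. After reducing, via Lemma \ref{Lemma.reducing}, to a separated diagram $\mathcal{B}(\prescript{t}{}{\lambda}_s,\mu_i,v)$ with $w>\lvert\prescript{t}{}{\lambda}\rvert\geq 0$, Lemma \ref{Affinestratumcondition} recasts the stratum condition as the existence of a dominant $(\Liegl_w)_{\text{aff}}$-weight $\kappa$ of level $n$ with $\lvert\kappa\rvert=\lvert\mu\rvert$, $\kappa\geq\mu$, and the transposed inequality $\prescript{t}{}{\kappa}+\bigl(\textstyle\sum_{s=n+1-m}^{n}\prescript{t}{}{\kappa}_s\bigr)\delta\geq\prescript{t}{}{\lambda}$. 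The remaining task is to show that the existence of such a $\kappa$ is equivalent to the system \eqref{susyinequalities}: the dominance and charge conditions on $\kappa$ encode supersymmetry of the associated balanced subdiagram (Remark \ref{remarksupersymmetryforbalance}), the transposed inequality encodes that it sits inside the given diagram, and together these translate termwise into the clockwise and anticlockwise supersymmetry inequalities. Matching the two descriptions closes the loop and completes the proof.
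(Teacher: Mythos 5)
Your architecture coincides with the paper's: the same cycle $(a)\Rightarrow(b)\Rightarrow(c)\Rightarrow(d)\Rightarrow(e)\Rightarrow(a)$ with $(f)$ attached via the numerical equivalence $(d)\Leftrightarrow(f)$, the same use of Theorem \ref{HWtransitionsforbow} in Step 1 (the paper's actual engine there is Lemma \ref{LEMMASW}, iterated), the same dimension interpretation of $\cD^t_{s,k}$ and $\aD^t_{s,k}$ in Step 2 --- though note that Remark \ref{R:interpretationinequalities} is an announcement, not a proof; the paper establishes it by the double induction of Lemma \ref{Lemma-interpretationsusyinequalities} --- and the same increment machinery (Theorem \ref{supersymmetricincrementsforbows}, Corollary \ref{corollarysupersymmetricincrementsforbow}) in Step 4.

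There is, however, a genuine gap in your reduction to finite type A, on which Steps 3, 4 and (implicitly) 5 all rest. You propose to reduce ``after cutting at a zero segment,'' but a separated affine diagram satisfying the supersymmetry inequalities need not have any zero segment; one must be manufactured, and doing so safely is the crux. The paper does this in two moves: Lemma \ref{Lemma.reducing} first normalizes $0\le v_0-v_{-w}<w$ by Hanany--Witten transitions, and Proposition \ref{Prop.reducing} then subtracts $a=\min\{v_1,\dots,v_n\}$ from every arrow-adjacent dimension, verifying both that the subdiagram still satisfies the inequalities \emph{and} --- the nontrivial converse your plan needs --- that supersymmetry of the subdiagram lifts back to the original via supersymmetric increments. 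Without the normalization this lifting claim fails in general: Remark \ref{Rem-susyincrements} warns precisely that decrements of the dimension vector of a bow diagram do not preserve supersymmetry, because D3-brane endpoints are invisible at the bow level. A related issue affects your Step 4: the ``minimal supersymmetric core'' of purely fixed D3-branes is, in the affine setting, not obviously Hanany--Witten equivalent to the empty diagram, since fixed branes with positive winding number are not annihilated by a single transition; the paper sidesteps this by stripping unfixed branes only \emph{after} the finite-type reduction (Theorem \ref{Theorem-susyfinitetypeAimpliesnonemptiness}), where all windings vanish and the fixed branes can be ordered $l_1\geq\cdots\geq l_w$ and annihilated sequentially. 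Finally, the finite-type endgames in Steps 3 and 5 are not ``termwise'' readings of the inequalities but greedy inductions: Proposition \ref{Prop-finitetypeAsupersymmetry} attaches to $\bigcirc_1$ the maximal number $f$ of fixed branes, uses the inequalities to force $v_1\geq v_0-f$, and recurses on $n$; Proposition \ref{Prop.inequalitiesstratumfinite} constructs the dominant weight $\kappa$ by the same maximality device and then checks $\kappa\geq\mu$ and $\prescript{t}{}{\kappa}\geq\prescript{t}{}{\lambda}$ against the inequalities. Supplying the Lemma \ref{Lemma.reducing}/Proposition \ref{Prop.reducing} pair and these greedy constructions would close your plan.
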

\begin{Remark}
   According to Definition \ref{definitionbowdiagram}, condition $(c)$ can be rephrased as: if a bow diagram $(B,\Lambda',\vv')$ is obtained from $(B,\Lambda,\vv)$ by successive applications of Hanany--Witten transitions, then $v'_\zeta\geq0$ for every segment $\zeta$. In other terms, we cannot generate negative dimensions by applying a sequence of Hanany--Witten transitions.  We notice that $(c) \Leftrightarrow (d) \Leftrightarrow (e)$ provides a numerical criterion for supersymmetry of affine type A brane diagrams.
\end{Remark}

Outline of the proof:
\begin{itemize}
    \item The implication $(a) \Rightarrow (b)$ is trivial.
    
    \item The implication $(e) \Rightarrow (c)$ follows from the definition of Hanany--Witten transitions for supersymmetric brane systems as a process of brane creation and annihilation. See \S\ref{s.hw} or the original work \cite{HW}.

    The implication $(e) \Rightarrow (c)$ follows from the original definition of Hanany--Witten transitions for supersymmetric brane systems as a process of brane creation and annihilation (see \S\ref{s.hw} or the original work \cite{HW}).
    
    \item The proof of the remaining equivalence is divided into five main steps, which are summarized in the following diagram.
\end{itemize}
\begin{center}
\input{Fig/Summaryproof}
\end{center}

The equivalence $(a) \Leftrightarrow (f)$ can be deduced from the interpretation of the stratum condition as the existence of a stratum in $\M_{0,0}$ (see Remark~\ref{Rem:stratumconditionexistencestratum}). However, familiarity with the stratification of bow varieties is not required, as we will instead provide a proof of the equivalence $(d) \Leftrightarrow (f)$. 

Also, in Appendix \ref{InverseStep2-Appendic} we provide a direct proof of $(c)\Leftarrow(d)$. Although this is not essential for the formal proof of the main theorem, it is interesting in its own right and, as far as we know, introduces new ideas for carrying out computations with Hanany--Witten transitions in affine type A bow diagrams.

\begin{Remark}
  Let us consider a bow diagram $(B,\Lambda,\vv)$ with no arrows or no x-points (i.e., $nw=0$). 
    Thanks to Theorem \ref{supersymmetricincrementsforbows}, the moment map equation $\mu=0$ has a solution if and only if $v_\zeta\geq0$ for every segment $\zeta$. Also, by definition of bow varieties, if $(b)$ holds then $v_\zeta\geq0$ for every segment $\zeta$. Therefore $(a)$ and $(b)$ hold if and only if $(B,\Lambda,\vv)$ originates from a brane diagram. It follows by construction that conditions $(c),(d),(e),(f)$ are satisfies if and only if  $v_\zeta\geq0$ for every segment $\zeta$.
\end{Remark}

\begin{Remark}\label{remarkSdualstratumcondition}
Supersymmetry is manifestly preserved under S-duality for affine type A brane systems, which consists in replacing every NS5-brane with a D5-brane and vice versa (see \cite[\S II.D]{GK99}, \cite[\S 7.1]{NT17}). In terms of bow diagrams, the S-duality replaces every arrow with an x-point and vice versa. As a consequence, all the equivalent conditions in Theorem~\ref{maintheorem} has to be preserved under S-duality.

The S-dual of the balanced condition is the \emph{cobalanced condition}, which requires that $v_{\zeta_x^-} = v_{\zeta_x^+}$ for every x-point $x$. Therefore, a bow diagram satisfies the \emph{S-dual stratum condition} if any Hanany--Witten equivalent separated bow diagram of the form $\mathcal{B}(\prescript{t}{}{\lambda}_s,\mu_i,v)$ admits a supersymmetric subdiagram $\mathcal{B}(\prescript{t}{}{\lambda}_s,\kappa_i,v')$ which is Hanany--Witten equivalent to a cobalanced bow diagram.

While in the original stratum condition we look for a subdiagram determined by a generalized Young diagram $[\prescript{t}{}{\kappa}_s]$, in the S-dual stratum condition we are instead looking for the transpose of such a generalized Young diagram. That is, in this case, S-duality corresponds to the transposition of generalized Young diagrams, which is related to the level-rank duality for Kac–-Moody Lie algebras of affine type A (see \S \ref{s.A3}).

We expect this duality to reflect a deeper geometric correspondence between strata and slices. As observed in Remark~\ref{Rem:stratumconditionexistencestratum}, a subdiagram $\mathcal{B}(\prescript{t}{}{\kappa}_s,\mu_i,v')$ as in the stratum condition determines a stratum of the bow variety $M_{0,0}$. Motivated by the stratification of balanced bow varieties in \cite{NT17}, we expect that a subdiagram $\mathcal{B}(\prescript{t}{}{\lambda}_s,\kappa_i,v')$ as in the S-dual stratum condition determines a slice to the stratum corresponding to $\mathcal{B}(\prescript{t}{}{\kappa}_s,\mu_i,v')$.

\end{Remark}

\begin{Remark}\label{Remarksusyforco-balanced}
It is known that bow varieties associated with supersymmetric cobalanced bow diagrams are Nakajima quiver varieties \cite{cherkis2011instantons}, while those associated with supersymmetric balanced bow diagrams are Coulomb branches of supersymmetric quiver gauge theories of affine type A \cite[\S6]{NT17}. As a consequence of Theorem \ref{maintheorem}, we recover the well-known fact that non-deformed Nakajima quiver varieties and Coulomb branches of affine type A supersymmetric quiver gauge theories are non-empty.
\end{Remark}

\begin{Remark}
Although supersymmetry inequalities are natural from the viewpoint of seeing what is required for the Hanany--Witten transitions to not lose the correspondence between bow and brane diagrams. However, there is an alternative approach for proving $(b)\Rightarrow(e)$ in the finite type A case. This approach involves fixing a solution of the moment map equation and studying it to construct a supersymmetric brane diagram. However, we do not present such a method in this paper since we have been able to apply it only in the finite type A case. Extending this method to more general settings remains an intriguing question for future work.
\end{Remark}

\subsection{Proof of Step 1}\label{s.step1}
We prove that if a bow diagram generates a non-empty bow variety, then every Hanany--Witten equivalent bow diagram originates from a brane diagram.

By definition of bow varieties, if a bow diagram generates a non-empty bow variety, it originates from a brane diagram.

Let us recall a necessary condition for affine type A bow diagrams to generate a non-empty bow variety:

\begin{Lemma}\label{LEMMASW} If a bow diagram admits a non-empty bow variety, then the application of any Hanany--Witten transition does not produce a negative dimension. \end{Lemma}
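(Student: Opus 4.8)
The statement asserts that a necessary condition for a bow diagram to generate a non-empty bow variety is that no single Hanany--Witten transition produces a negative dimension. The natural strategy is a contrapositive argument: I will show that if some Hanany--Witten transition applied to $(B,\Lambda,\vv)$ yields a bow diagram with a negative dimension entry, then the original bow variety is empty. The key tool is Theorem \ref{HWtransitionsforbow} (Nakajima--Takayama, \cite[Proposition 7.1]{NT17}), which gives an isomorphism $\M_{\lambda,\theta}(B,\Lambda,\vv)\cong\M_{\lambda,\theta}(B,\Lambda',\vv')$ between Hanany--Witten equivalent bow diagrams. The only subtlety is that, as noted in Remark \ref{Rem:Step1}, \cite{NT17} states this isomorphism only for diagrams with $\vv,\vv'\in\NN^{\IS}$, whereas here we must allow the transformed diagram to have a negative entry.

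\emph{First} I would invoke the local form of the Hanany--Witten transition~\eqref{HWprocess}: swapping an arrow and an x-point replaces the middle dimension $v$ by $v'=v^-+v^++1-v$. Suppose this produces $v'<0$; I must argue $\M_{\lambda,\theta}(B,\Lambda,\vv)=\varnothing$. The cleanest route is to observe that the Hanany--Witten isomorphism is constructed \emph{locally}, only modifying the data attached to the two branes being swapped and the three segments $v^-,v,v^+$ involved, while leaving all other building blocks $\M^x$, $\M^e$ untouched. \emph{Then} I would show that when $v'<0$, the local piece of the construction forces an empty fibre: concretely, in the relevant triangle/arrow datum the stability conditions (\ref{condition-S1})--(\ref{condition-S2}) together with the moment map equation (\ref{condition-a}) cannot be simultaneously satisfied when the target dimension is negative, so there is no point lying over $\lambda$ that is $\theta$-semistable. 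Since the transition is invertible, a nonempty $\M_{\lambda,\theta}(B,\Lambda,\vv)$ would produce a nonempty $\M_{\lambda,\theta}(B,\Lambda',\vv')$; but the latter has a negative dimension and hence is empty by definition. This contradiction yields the lemma.

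\emph{The main obstacle} is precisely the extension of Theorem \ref{HWtransitionsforbow} to the case where the transformed diagram leaves $\NN^{\IS}$, since the original statement in \cite{NT17} is framed only for genuine (nonnegative) bow diagrams. I expect to handle this by noting that the isomorphism is realized through explicit linear-algebra moves (the ``sliding'' of D3-branes) that make sense as long as both diagrams are defined; when the target has a negative entry, one should interpret the isomorphism as an isomorphism with the empty variety, which is exactly what we want. Alternatively, and this is the shortcut the paper hints at, one may cite \cite[Corollary 3.14]{SW23} together with \cite[Proposition 7.1]{NT17}, which already packages the desired conclusion. I would therefore record the argument in the contrapositive as above, flagging that the only new input beyond the cited results is the bookkeeping observation that a negative dimension entry makes the associated variety empty by fiat, so that the Hanany--Witten isomorphism propagates emptiness backwards.
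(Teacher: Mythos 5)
There is a genuine gap at exactly the point you flag, and your two proposed fixes do not close it. Your central step --- ``since the transition is invertible, a nonempty $\M_{\lambda,\theta}(B,\Lambda,\vv)$ would produce a nonempty $\M_{\lambda,\theta}(B,\Lambda',\vv')$'' --- is not a consequence of Theorem \ref{HWtransitionsforbow}: as Remark \ref{Rem:Step1} stresses, Nakajima--Takayama's Proposition 7.1 is proved only when \emph{both} dimension vectors lie in $\NN^{\IS}$, and the extension to diagrams with a negative entry is precisely the content of Lemma \ref{LEMMASW} itself (it is what makes Step 1 work). Once $v'<0$, the variety $\M_{\lambda,\theta}(B,\Lambda',\vv')$ is empty \emph{by definition}, so its emptiness carries no information whatsoever about the source; an ``isomorphism with the empty variety'' declared by fiat cannot propagate emptiness backwards. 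Your alternative suggestion --- that when $v'<0$ the conditions (\ref{condition-a}), (\ref{condition-S1}), (\ref{condition-S2}) ``cannot be simultaneously satisfied'' --- is not meaningful as stated, because there is no space of triangles attached to a negative dimension: the conditions live on the data of the \emph{source} diagram. What is actually needed, and what you never supply, is a direct linear-algebra argument on a point $m\in\mu^{-1}(\lambda)^{\theta\text{-ss}}$ of the source: the moment map equation together with the stability conditions forces an injectivity/surjectivity (rank) constraint on a map assembled from the local data $(A,B^\pm,a,b,C,D)$ around the arrow and x-point being swapped, yielding the inequality $v\leq v^-+v^++1$. That inequality is the substantive content of the lemma.

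This substantive argument is exactly what \cite[Corollary 3.14]{SW23} proves for $\lambda=0$, and what one extracts from the \emph{proof} (not the statement) of \cite[Proposition 7.1]{NT17} for general $\lambda$; the paper accordingly omits the proof and cites these sources, together with \cite[Corollary 4.17]{gaibisso2024quiver} for a complete argument in arbitrary type and arbitrary $\lambda$. So your fallback of ``citing \cite[Corollary 3.14]{SW23} together with \cite[Proposition 7.1]{NT17}'' coincides with the paper's actual route, but note the logical order: those citations are not an optional shortcut around your isomorphism argument --- they (or an equivalent rank computation) are the only genuine proof on offer, and your primary argument is circular without them.
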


We omit the proof here. For $\lambda=0$, this was established in \cite[Corollary 3.14]{SW23}, and for general $\lambda$, it can be derived from Nakajima and Takayama’s proof of Theorem \ref{HWtransitionsforbow} (\cite[Proposition 7.1]{NT17}). For a proof, we refer the reader to \cite[Corollary 4.17]{gaibisso2024quiver}, where \cite[Corollary 3.14]{SW23} is generalized to bow varieties of general type and for arbitrary deformation parameter $\lambda$.

Step 1 follows from the previous lemma by observing that Hanany–-Witten transitions induce isomorphisms of bow varieties (Theorem \ref{HWtransitionsforbow}) and thus preserve the property of bow diagrams of generating a non-empty bow variety.

\begin{Remark}
As shown in \cite{gaibisso2024quiver}, the condition in Lemma \ref{LEMMASW} is not a sufficient condition. That is, it is not enough to check sequence of Hanany--Witten transitions consisting of one transition. For instance, consider the bow diagram 
\begin{equation*}
    \input{Fig/notsusyexample}
\end{equation*}
Although we can apply the Hanany--Witten transition to swap $\bigcirc_i$ and $x_i$ for $i=1,2$, the bow diagram is not supersymmetric.
\end{Remark}

\subsection{Proof of Step 2}\label{s.step2}
Consider a separated bow diagram $(B,\Lambda,\vv)$ as in Figure \ref{StypeAffine}. We prove that if every Hanany--Witten equivalent bow diagram originates from a brane diagram, then it satisfies the supersymmetry inequalities (\ref{susyinequalities}). 

Suppose $n = 0$ or $w = 0$, the supersymmetry inequalities reduce to the condition $v_\zeta \geq 0$ for all segments $\zeta$. This is equivalent to the condition for $(B, \Lambda, \vv)$ to originate from a brane diagram. Moreover, in this case, the only Hanany--Witten equivalent bow diagram to $(B, \Lambda, \vv)$ is $(B, \Lambda, \vv)$ itself. Therefore, if $nw=0$, it is clear that $(c)\Leftrightarrow(d)$.

From now on, let us assume $n,w>0$.

Let $t\in\NN^+$, $s \in \{0,1,\dots,n\}$ and $k\in\{0,1,\dots,w\}$. If $s,k\neq0$, we define $\cd^t_{s,k}$ as the dimension between $x_k$ and $\bigcirc_s$ after applying Hanany--Witten transitions to move $x_1,\dots,x_w$ clockwise through $\bigcirc_1,\dots,\bigcirc_{n}$ $t-1$-times and further move $x_1,\dots,x_k$ through $\bigcirc_1,\dots,\bigcirc_s$. If $sk=0$, we define:
\begin{equation*}
    \begin{aligned}
        &\cd^1_{s,0}=\cD^1_{s,0}=v_s & &\cd^1_{0,k}=\cD^1_{0,k}=v_{-k}& &\\
        &\cd^t_{0,k} = \cd^{t-1}_{n,k} & & \cd^t_{s,0} = \cd^{t-1}_{s,w}& &  \text{ if }t\geq1.
    \end{aligned}
\end{equation*}
The previous relations are the same satisfied by $\cD^t_{s,k}$, see Remark \ref{R:interpretationinequalities}.

By replacing ``clockwise" with ``anticlockwise", we can define $\ad^t_{s,k}$ for $t\in\NN^+$, $s \in \{0,1,\dots,n\}$ and $k\in\{0,1,\dots,w\}$.

The claim follows from the following Lemma.
\begin{Lemma}\label{Lemma-interpretationsusyinequalities}
    Under the previous assumption, $\cd^t_{s,k} = \cD^t_{s,k}$ and $\ad^t_{s,k} = \aD^t_{s,k}$ for all $t$, $s$, and $k$.
\end{Lemma}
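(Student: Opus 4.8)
The plan is to prove the identity $\cd^t_{s,k} = \cD^t_{s,k}$ by induction, exploiting the recursion in Remark \ref{Rem:HWandN} that each Hanany--Witten transition moving an x-point clockwise past an arrow decreases the pair $(N_e, N_x)$ by $(1,1)$. The quantity $\cd^t_{s,k}$ is \emph{defined} operationally as a dimension appearing after a prescribed sequence of transitions, whereas $\cD^t_{s,k}$ is given by the explicit closed formula in Definition \ref{HWinequalities}. So the content of Lemma \ref{Lemma-interpretationsusyinequalities} is that the closed formula correctly computes the result of tracking dimensions through the transition sequence. The anticlockwise identity $\ad^t_{s,k} = \aD^t_{s,k}$ follows by the symmetric argument (reversing orientation), so I would treat only the clockwise case in detail.

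\textbf{Strategy via a single transition increment.} First I would record what one elementary clockwise transition does to a dimension. When we move $x_k$ clockwise past $\bigcirc_s$, the new dimension $v'$ on the segment between them satisfies the Hanany--Witten relation $v + v' = v^- + v^+ + 1$ from \eqref{HWprocess}, where $v^\pm$ are the two adjacent dimensions. Equivalently, in terms of the increments $N_e = v_{h(e)}-v_{t(e)}$ and $N_x = v_x^- - v_x^+$, the transition sends $(N_e, N_x) \mapsto (N_e - 1, N_x - 1)$. The cleanest route is to fix the base case and then show that both sides of the claimed identity obey the same recurrence under adding one more clockwise transition. Concretely, I would verify the base case using the relations already recorded in Remark \ref{R:interpretationinequalities}: $\cD^1_{s,0} = v_s$, $\cD^1_{0,k} = v_{-k}$, and the wrap-around relations $\cD^t_{0,k} = \cD^{t-1}_{n,k}$, $\cD^t_{s,0} = \cD^{t-1}_{s,w}$. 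These are exactly the defining relations imposed on $\cd^t_{s,k}$ when $sk = 0$, so the two families agree on the ``boundary'' indices by construction.

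\textbf{The inductive step.} For the interior case $s,k > 0$, I would induct on the total number of elementary transitions performed, i.e. lexicographically on $(t,k,s)$ (or on $t$ with a nested induction on how far the current batch of x-points has advanced). The key computation is: assuming $\cd$ agrees with the formula at the previous stage, perform one more transition moving the appropriate x-point past the next arrow, apply the relation $v + v' = v^- + v^+ + 1$ to express the new dimension in terms of neighbouring (already-known) dimensions, and check that the closed-form expression for $\cD^t_{s,k}$ increments in precisely the same way. The explicit polynomial $sk + (t-1)(sw+kn) + \tfrac{(t-1)(t-2)}{2}wn + v_s + v_{-k} + (t-1)v_{-w} - t v_0$ is quadratic in $(s,k,t)$, and one checks that its discrete differences $\cD^t_{s,k} - \cD^t_{s-1,k}$, $\cD^t_{s,k} - \cD^t_{s,k-1}$ match the $+1$ bookkeeping coming from the Hanany--Witten rule together with the linear drift in the flanking dimensions. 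This is a direct but bookkeeping-heavy verification; I would set it up so that the quadratic cross-term $sk$ and the coefficients $(t-1)$, $\tfrac{(t-1)(t-2)}{2}$ emerge as accumulated sums $\sum 1 = sk$ and $\sum_{j<t}(\cdots)$ from iterating the elementary increment.

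\textbf{Expected main obstacle.} The hard part is not any single step but the careful indexing of which dimension ``between $x_k$ and $\bigcirc_s$'' we are tracking as the x-points loop around the circle multiple times. Because the diagram is affine (x-points make full loops), I must be scrupulous about the wrap-around: after a batch of x-points passes $\bigcirc_n$ it re-enters at $\bigcirc_1$, and the winding counter $t$ must be incremented consistently. Keeping the adjacency data $v^-, v^+$ correctly identified at each stage — and confirming that the formula's boundary relations $\cD^t_{0,k}=\cD^{t-1}_{n,k}$ genuinely splice the loops together without an off-by-one error in $t$ — is where the real care is needed. Once the recurrence and base case are matched, the induction closes formally, and the identity $\cd^t_{s,k} = \cD^t_{s,k}$ (hence the analogous anticlockwise statement) follows, completing the proof of Lemma \ref{Lemma-interpretationsusyinequalities} and thereby Step 2.
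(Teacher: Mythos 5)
Your proposal is correct and follows essentially the same route as the paper: the paper also proves $\cd^t_{s,k}=\cD^t_{s,k}$ by induction (on $t$, with a nested induction on the product $sk$ for the base case, which is just a reorganization of your lexicographic scheme), anchors the boundary indices via the relations of Remark \ref{R:interpretationinequalities}, and closes the interior step with exactly your single-transition recurrence, namely $\cd^1_{s,k}=\cd^1_{s,k-1}+\cd^1_{s-1,k}-\cd^1_{s-1,k-1}+1$, checked against the discrete differences of the closed formula. The wrap-around splicing you flag as the main danger is handled in the paper precisely as you anticipate, by reducing the $t>1$ case to the $t=1$ case on the diagram obtained after $t-1$ full clockwise loops, using $\cD^t_{0,k}=\cD^{t-1}_{n,k}$ and $\cD^t_{s,0}=\cD^{t-1}_{s,w}$.
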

\begin{proof} We will prove by induction on $t$ that $\cd^t_{s,k} = \cD^t_{s,k}$ for all $t$, $s$, and $k$. The statement for the anticlockwise inequalities follows in the same way, so we omit the proof.

Suppose $t=1$ and let us proceed by induction on the product $sk$. If $sk=0$ it follows by definition. Let us fix $s\in\{1,\dots,n\}$ and $k\in\{1,\dots,w\}$ and let us suppose that $\cd^1_{s',k'}=\cD^1_{s',k'}$ for $s'k'<sk$.
By definition, if we have moved $x_1,\dots,x_{k-1}$ through $\bigcirc_1,\dots,\bigcirc_s$ and $x_k$ through $\bigcirc_1,\dots,\bigcirc_{s-1}$, then we have the following local configuration:
\begin{equation*}
    \input{Fig/step2picture}
\end{equation*}

Therefore, by the definition of Hanany--Witten transitions and the induction hypothesis, we have:
\begin{equation*}
       \begin{aligned} \cd^1_{s,k}&=\cd^1_{s,k-1}+\cd^1_{s-1,k}-\cd^1_{s-1,k-1}+1\\
      &=\cD^1_{s,k-1}+\cD^1_{s-1,k}-\cD^1_{s-1,k-1}+1\\
      &=sk+v_s+v_{-k}-v_0=\cD^1_{s,k}.
       \end{aligned}
\end{equation*}
Let us fix $t>1$ and suppose the claim is true for any $t'\leq t-1$. After moving $x_1,\dots,x_w$ clockwise $t-1$ times thorough $\bigcirc_1,\dots,\bigcirc_{n}$ we obtain the following separated diagram:
\begin{center}
     \input{Fig/Figure-_t-1_rotations}
\end{center} 

We notice that, from the induction hypothesis, we have $\cd^{t-1}_{s,w}=\cD^{t-1}_{s,w}$ and $\cd^{t-1}_{n,k}=\cD^{t-1}_{n,k}$ for every $s\in\{1,\dots,n\}$ and $k\in\{0,\dots,w\}$. 

Hence, from the case $t=1$, given $s\in\{1,\dots,n\}$ and $k\in\{1,\dots,w\}$, we have:
\begin{equation*}
\begin{aligned}
    \cd^t_{s,k}&=sk+ \cd^{t-1}_{s,w}+\cd^{t-1}_{n,k}-\cd^{t-1}_{n,0}\\
&=sk+ \cD^{t-1}_{s,w}+\cD^{t-1}_{n,k}-\cD^{t-1}_{n,0}=\cD^t_{s,k}
\end{aligned}
\end{equation*}
where the last identity follows from a direct computation.
\end{proof}

\subsection{Proof of Step 3}\label{s.step3}
We prove that a separated bow diagram satisfying the supersymmetry inequalities \ref{susyinequalities} is supersymmetric (Definition \ref{definitionsupersymmetry}).

The strategy is to apply Hanany--Witten transitions to obtain a separated diagram that satisfies additional constraints, enabling the application of `supersymmetric decrements' (Remark \ref{Rem-susyincrements}) to reduce the problem to the finite type A case.


\begin{Lemma}\label{Lemma.reducing}
 Any separated bow diagram of affine type A is Hanany--Witten equivalent to a bow diagram $(B,\Lambda,\vv)$:
 \begin{equation*}
     \input{Fig/StypeAffine3}
 \end{equation*}
such that $w> v_0-v_{-w}\geq0$.
\end{Lemma}
\begin{proof}Given a separated bow diagram as in Figure \ref{StypeAffine}, by moving $\bigcirc_1$ anticlockwise through all the x-points, we obtain:
\begin{equation*}
    \input{Fig/StypeAffine4}
\end{equation*}
where $v_{-w}^{new}=\cD^1_{1,w}$ follows from the proof of step 2 (see \S\ref{s.step2} or Remark \ref{R:interpretationinequalities}). Then, $v_0^{new}-v_{-w}^{new}=v_0-v_{-w}-w$. Similarly, if we move $\bigcirc_n$ clockwise through all the x-points, we can increase $v_0-v_{-w}$ by $w$.
\end{proof}
\begin{Remark}
We notice that in Lemma \ref{Lemma.reducing} we did not assume supersymmetry inequalities. An alternative proof of this lemma can be obtained using Remark \ref{Rem:HWandN} instead of $\cD^t_{s,k}$ and $\aD^t_{s,k}$. Also, by inverting the role of arrows and x-points we could change the difference $v_0-v_{-w}$ by $\pm n$.
\end{Remark}



\begin{Proposition}\label{Prop.reducing}
    Let us consider a separated bow diagram $(B,\Lambda,\vv)$ of affine type A, as in Figure \ref{StypeAffine}, satisfying supersymmetry inequalities and such that $w> v_0-v_{n} \geq0$. Then, given $a\leq v_s$ for $s\in\{1\dots,n\}$, the bow subdiagram:
    \begin{equation*}
        \input{Fig/redStypeAffine}
    \end{equation*}
satisfies supersymmetry inequalities. Moreover, if the new bow diagram is supersymmetric, then $(B,\Lambda,\vv)$ must also be supersymmetric.
\end{Proposition}
\begin{proof}
The inequalities follow from a direct computation. Let us denote the left-hand side of the supersymmetry inequalities (\ref{susyinequalities}) for the new diagram by $\bigl(\cD^t_{s,k}\bigr)'$ and $\bigl(\aD^t_{n+1-s,w+1-k}\bigr)'$.
        
Let us check the clockwise inequalities. For $k\neq0,w$; we have $\bigl(\cD^t_{s,k}\bigr)'=\bigl(\cD^t_{s,k}\bigr)\geq0$. For $k=w$, we have:
\begin{equation*}
        \bigl(\cD^t_{s,w}\bigr)'=tsw+\frac{(t-1)t}{2}wn+v_s-a-t(v_0-v_{-w})
\end{equation*}
and the claim immediately follows from $w\geq v_0-v_w$ and $v_s\geq a$ for every $s$. If $k=0$ and $t>1$ we are in one of the previous case (Remark \ref{R:interpretationinequalities}). If $k=0$ and $t=1$ we have $\bigl(\cD^1_{s,0}\bigr)'=v_s-a\geq0$.

Let us check the anticlockwise inequalities. If $k\neq0,w$; then $$\bigl(\aD^t_{n+1-s,w+1-k}\bigr)'=\bigl(\aD^t_{n+1-s,w+1-k}\bigr)\geq0.$$ If $k=w$, the inequalities follow from $v_0-v_{-w}\geq0$ and $v_s\geq a$ for every $s$. The case $k=0$ follows as in the clockwise case. 

The second part of the theorem follows from Remark \ref{Rem-susyincrements} by noticing that the original bow diagram is obtained from the second one by supersymmetric increments.
\end{proof}

To recap, let $(B,\Lambda,\vv)$ be a separated bow diagram of affine type A. We can suppose $0\leq v_0-v_{-w}\leq w$ (Lemma \ref{Lemma.reducing}). Suppose $(B,\Lambda,\vv)$ satisfies supersymmetry inequalities. Then, we can reduce the problem to study the supersymmetry of a bow subdiagram $(B,\Lambda,\vv')$ as in Proposition \ref{Prop.reducing}. 
In particular, if we set $a=\min\{v_1,\dots,v_n\}$, then $(B,\Lambda,\vv')$ is of finite type A and it satisfies supersymmetry inequalities. However, generally, as a finite type A diagram it is not separated. However, we have proved that supersymmetry inequalities are equivalent to require that every sequence of Hanany--Witten transitions cannot generate negative dimensions (see \S\ref{s.step2}). Therefore, $(B,\Lambda,\vv')$ is Hanany--Witten equivalent to a separated bow diagram of finite type A satisfying supersymmetry inequalities.

This reduces the problem to the case of a separated bow diagram of finite type A, which is addressed in the following proposition.
\begin{Proposition}\label{Prop-finitetypeAsupersymmetry}
    Let us consider separated bow diagram of finite type A as in Figure \ref{StypeA}. Suppose it satisfies clockwise\footnote{This choice depends on the convention we adopted to index separated finite type A bow diagrams.} supersymmetry inequalities for $t=1$, that is:
\begin{equation}
    \cD^1_{s,k}=sk+v_s+v_{-k}-v_0\geq0
\end{equation}
for every $s\in\{0,1,\dots,n\}$ and $k\in\{0,1,\dots,w\}$. Then, it is supersymmetric.
\end{Proposition}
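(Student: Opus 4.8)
\textbf{Overall strategy.} The plan is to show that a separated finite type A bow diagram satisfying the clockwise supersymmetry inequalities for $t=1$ originates from a \emph{supersymmetric} brane diagram, i.e.\ that we can place D3-brane endpoints on the 5-branes so that between each NS5--D5 pair there is at most one clockwise and at most one anticlockwise fixed D3-brane (with winding number $0$, since we are in finite type). Because the diagram is of finite type, every fixed D3-brane has winding number $0$, so only the $t=1$ inequalities are relevant, which is exactly what is assumed. I would proceed by induction, peeling off either the first arrow $\bigcirc_1$ or the first x-point $x_1$ and reducing to a diagram with fewer 5-branes, checking that the inequalities are inherited and that a supersymmetric realization of the smaller diagram lifts to one of the original.

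\textbf{Key steps.} First, I would reformulate the quantity $\cD^1_{s,k}=sk+v_s+v_{-k}-v_0$ as counting D3-brane segments: if I start from the separated configuration and move the first $k$ x-points clockwise past the first $s$ arrows via Hanany--Witten transitions, then $\cD^1_{s,k}$ is precisely the dimension between $x_k$ and $\bigcirc_s$ in the resulting diagram (this is the interpretation established in Step 2, Lemma \ref{Lemma-interpretationsusyinequalities}). Nonnegativity of all these quantities says that \emph{no} Hanany--Witten transition applied to the separated finite diagram produces a negative dimension; by the equivalence proved in Step 2, this is exactly condition $(c)$ for the finite diagram. Second, I would construct the supersymmetric brane diagram explicitly. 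The natural construction is to interpret each x-point $x_k$ as a D5-brane and each arrow $\bigcirc_s$ as an NS5-brane, and to route the D3-branes greedily: for each NS5--D5 pair declare a fixed D3-brane present precisely when the Hanany--Witten accounting forces one. The inequalities $\cD^1_{s,k}\ge 0$ guarantee that the running dimension counts are consistent with placing \emph{at most one} fixed D3-brane of each orientation between any such pair, which is the definition of supersymmetry. Third, I would verify that the unfixed D3-branes (those stretched between two NS5-branes, respectively two D5-branes) can absorb any remaining dimension, since unfixed branes are unconstrained by supersymmetry; here the extreme-segment condition $v_0=v_{\pm\text{ends}}=0$ of the finite diagram is used to anchor the count.

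\textbf{Expected main obstacle.} The delicate point is the bookkeeping that translates the single numerical condition $\cD^1_{s,k}\ge0$ into the combinatorial statement ``at most one fixed D3-brane of each orientation per 5-brane pair.'' One must show that the greedy placement of fixed D3-branes is \emph{forced} and \emph{unique} given the dimension vector, and that the inequalities are exactly the obstruction to a second fixed brane appearing between some pair. Concretely, a second clockwise fixed D3-brane between $\bigcirc_s$ and $x_k$ would correspond to a violation $\cD^1_{s,k}<0$ after an appropriate Hanany--Witten move, so the contrapositive must be argued carefully. I anticipate handling this by induction on $n+w$: remove the 5-brane adjacent to an extreme zero-dimensional segment, check that the reduced dimension vector still satisfies $\bigl(\cD^1_{s,k}\bigr)'\ge0$ (a direct computation analogous to the one in Proposition \ref{Prop.reducing}), invoke the inductive hypothesis to obtain a supersymmetric realization of the smaller diagram, and finally reinsert the removed 5-brane, using the relevant inequality to certify that reinsertion adds at most one fixed D3-brane of each orientation to each affected pair.
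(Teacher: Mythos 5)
Your proposal follows essentially the same route as the paper's proof: induction peeling off the first arrow $\bigcirc_1$, greedily attaching the maximal number $f$ of fixed D3-branes to it, verifying that the truncated diagram inherits the inequalities via a direct computation (the paper gets the clean identity $(\cD^1_{s,k})^{new}=\cD^1_{s+1,k}$, with maximality of $f$ yielding $v_1\geq v_0-f$ and an index $\tilde{j}$ with $v_{-\tilde{j}}=f-\tilde{j}$ that truncates the x-points), and letting unfixed D3-branes absorb the remaining dimensions. The only cosmetic differences are that the paper inducts on $n$ alone rather than $n+w$, and it never needs your uniqueness claim for the greedy placement --- existence of one supersymmetric realization suffices.
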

\begin{proof}
We proceed by induction on the number of arrows\footnote{Here, the role of arrows (i.e. NS5-branes) and x-points (i.e. D5-branes) is completely interchangeable.} $n$.

If $n=0$, then supersymmetry inequalities becomes $v_{-k}\geq0$ for every $k$. In this case, these inequalities are equivalent to supersymmetry by construction.

If $n=1$, then
\begin{equation*}
    \input{Fig/N=0}
\end{equation*}
and $0\leq\cD^1_{1,k}=k+v_{-k}-v_0$. That is, $v_{-k}\geq v_0-k$ for every $k$. Let us construct a brane diagram generating $(B,\Lambda,\vv)$. By the supersymmetry inequalities, we can stretch one D3-brane with endpoints on $\bigcirc_1$ and $x_j$ for every $j\leq v_0\leq w$. The remaining D3-branes we left to arrange are only related with segments between $x_1$ and $x_w$. Therefore, they must necessarily be unfixed and so their arrangement is irrelevant to supersymmetry. That is, an associated brane diagram is given by:
\begin{center}
      \input{Fig/Figure-ArrangementN=0}
\end{center}

Let us suppose the claim is true for $n'<n$ with $n>1$. We construct now a supersymmetric brane diagram for such a bow diagram. We define
    \begin{equation}
        f\coloneqq \operatorname{max}\Bigl\{ k \in \{0,1,\dots,w\} \ \mid \ v_{-j} \geq k-j \ \ \ \ \ \forall j\in\{0,\dots,k\}\Bigr\}.
    \end{equation}
    That is, $f$ is the maximum index for which we can stretch $f$ fixed D3-branes between $\bigcirc_1$ and $x_{f}$ without breaking supersymmetry. Note, $f=0$ if and only if $v_0=0$. In such a case the claim is trivially true. Let us suppose $v_0>0$. Then, by definition of $f$, we start to construct the brane diagram by stretching a D3-brane with endpoints on $\bigcirc_1$ and $x_k$ for every $k\in\{1,\dots,f\}$, that is:
\begin{center}
    \input{Fig/Figure-Definitionoff}
\end{center}

We still need to arrange $v_0-f$ fixed D3-branes. Suppose $v_0-f>0$. Since we cannot attach more fixed D3-branes to $\bigcirc_1$ we will proceed to remove it. We have to fix $v_0-f$ D3-branes passing between $\bigcirc_1$ and $x_1$ but with no endpoints on $\bigcirc_1$. So, the first thing we need is $v_1 \geq v_0-f$. From supersymmetry inequalities, if $v_1<v_0-f$, then for every $j$ we have
\begin{equation*}
	 v_{-j} \geq v_0-v_1-j \geq f+1-j,
\end{equation*}
which contradicts the maximality of $f$. 

Furthermore, by definition of $f$, there must exists $\Tilde{j}\in\{0,1,\dots,f\}$ such that $v_{-\Tilde{j}}=f-\Tilde{j}$. Therefore, in any associated brane diagram having a fixed D3-brane between $\bigcirc_1$ and $x_k$ for every $k\in\{1,\dots,f\}$, we cannot have another fixed D3-brane with an endpoint on $x_k$ for some $k>\Tilde{j}$. 

We also notice that among the $v_1$ D3-branes we have to stretch between $\bigcirc_2$ and $\bigcirc_1$, only $v_0-f$ has to be fixed. Therefore, by attaching $v_1-(v_0-f)$ D3-branes between $\bigcirc_2$ and $\bigcirc_1$, we reduce the problem to show the supersymmetry for the bow diagram:
\begin{center}
    \input{Fig/InductionfinitetypeA}
\end{center}
That is, a separated finite type A diagram, with $n-1$ arrows, $\Tilde{j}$ x-points and dimension vector given by:
\begin{equation*}
\begin{aligned}
    &v'_s=v_{s+1} & \text{ if }& s\in\{1,\dots,n-1\},\\
    &v'_{-k}=v_{-k}-(f-k)  &\text{ if }& k\in\{0,\dots,\Tilde{j}\}.
\end{aligned}
\end{equation*}
By the induction hypothesis, it is enough to show that it satisfies supersymmetry inequalities. Let us compute them. Given $s\in\{1,\dots,n-1\}$ and $k\in\{1,\dots,\Tilde{j}\}$:
\begin{equation*}
        (\cD^1_{s,k})^{new}=sk+v_{s+1}+v_{-k}-(f-k)-(v_0-f)=D^1_{s+1,k}\geq 0.
    \end{equation*}
\end{proof}
\begin{Remark}
     The argument used for the case $n=1$ in the proof of Proposition \ref{Prop-finitetypeAsupersymmetry} applies to any $n\in\NN^+$ when $v_1=0$.
\end{Remark}
\begin{Remark}\label{rem:susydecrements}
     Proposition \ref{Prop.reducing} can be interpreted as a `supersymmetric decrement'. Indeed, since we have proved $(c)\Leftrightarrow(d)\Leftrightarrow(e)$, it is saying that under certain conditions, we can safely reduce the dimension vector without breaking supersymmetry.
\end{Remark}

\subsection{Proof of Step 4} 
We prove that if a bow diagram of affine type A $(B,\Lambda,\vv)$ is supersymmetric then $ \mu_{(B,\Lambda,\vv)}^{-1}(0) \neq \varnothing $.

Proceeding as in the proof of Step 3 (see \S\ref{s.step3}), up to apply a sequence of Hanany--Witten transitions, we can suppose that $(B,\Lambda,\vv)$ is constructible by applying supersymmetric increments between arrows to a supersymmetric finite type A bow diagram. 

Since Hanany--Witten transitions correspond to isomorphisms of bow varieties (Theorem \ref{HWtransitionsforbow}), it suffices to prove that supersymmetric increments preserve the existence of a solution for $\mu=0$ (Theorem \ref{supersymmetricincrementsforbows}) and that such a solution exists for supersymmetry bow diagrams of finite type A (Theorem \ref{Theorem-susyfinitetypeAimpliesnonemptiness}).

\begin{Theorem}\label{supersymmetricincrementsforbows}
    Let us fix a bow diagram $(B,\Lambda,\vv)$ such that $\mu^{-1}(\lambda) \neq \varnothing$ for some $\lambda\in\CC^\II$.
\begin{enumerate}[label=(\roman*)]
    \item Suppose $\lambda=0$. If $(B,\Lambda,\vv^{new})$ is obtained from $(B,\Lambda,\vv)$ via supersymmetric increments between arrows, then $\mu^{-1}_{(B,\Lambda,\vv^{new})}(0)\neq\varnothing$.
    \item Suppose $x_1,x_2\in\Lambda$ are two consecutive x-points on the same wavy line and ordered according the wavy line's orientation. Let $\zeta=\zeta_{x_1}^+=\zeta_{x_2}^-$. If $\vv^{new} \in \NN^{\IS}$ such that $v^{new}_{\zeta}-v_{\zeta}\in\NN$ and $v^{new}_{\zeta'}=v_{\zeta'}$ for $\zeta'\neq\zeta$; then $\mu^{-1}_{(B,\Lambda,\vv^{new})}(\lambda)\neq\varnothing$.
\end{enumerate}
\end{Theorem}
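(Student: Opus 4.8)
The plan is to prove both statements constructively: starting from a solution $m=(A,B,a,b,C,D)\in\mu^{-1}(\lambda)$ for $(B,\Lambda,\vv)$, I would exhibit an explicit point of the moment-map fibre for the enlarged diagram. Since a general supersymmetric increment between arrows, and likewise the hypothesis of (ii), is a composition of elementary steps each raising the dimension of the affected segments by $1$, it suffices to treat a rank-one increment and then iterate. In every case the construction enlarges the spaces $V_\zeta=\CC^{v_\zeta}$ on the affected segments by a one-dimensional summand and extends all the maps of $m$ into and out of these spaces; the entire point is to do this while preserving \emph{simultaneously} the segment equations of \eqref{momentmapequation}, the triangle relation \eqref{condition-a} at each x-point, and the two stability conditions \eqref{condition-S1} and \eqref{condition-S2}.

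For (i) I would use a \emph{trivial threading}. Take $p=1$ and let $\mathcal{A}$ be the arc, bounded by the two arrows, on which the increment is supported; set $V_\zeta^{new}=V_\zeta\oplus\CC e_\zeta$ for every segment $\zeta$ on $\mathcal{A}$ and leave the remaining spaces unchanged. On the new lines I let $A_x$ (for x-points $x$ interior to $\mathcal{A}$) and $C_{e''}$ (for arrows $e''$ interior to $\mathcal{A}$) act as the identity $e_{\zeta_x^-}\mapsto e_{\zeta_x^+}$, $e_{\zeta_{t(e'')}}\mapsto e_{\zeta_{h(e'')}}$, while all operators $B_x^\pm$, all maps $D_{e''}$, and all framings $a_x,b_x$ vanish on the new summands; at the two boundary arrows only one adjacent space grows, and the corresponding $C,D$ are extended by zero. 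A direct check shows that on the threaded summand every component of \eqref{momentmapequation} vanishes \emph{precisely because all the $B$'s and all the $CD$-terms vanish there}; as $\lambda$ is supported on the first segments (the heads of arrows), matching these components to $\lambda$ forces the hypothesis $\lambda=0$, which is exactly the restriction in (i). Relation \eqref{condition-a} holds trivially on the new summand, and \eqref{condition-S1},\eqref{condition-S2} are preserved because the identity action of $A_x$ is both injective on $\CC e_{\zeta_x^-}$ and surjective onto $\CC e_{\zeta_x^+}$, so the new lines never enter $\Ker A_x\cap\Ker b_x$ and always lie in $\Ima A_x$. Iterating over the $p$ copies and over successive increments yields (i).

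For (ii) this trick is unavailable: the segment $\zeta$ is enlarged in isolation between the consecutive x-points $x_1,x_2$, its neighbours are unchanged, so there is nothing to thread an identity through. Here I would argue directly, reducing to $V_\zeta^{new}=V_\zeta\oplus\CC e$ and setting $B_{x_2}^{-,new}=B_{x_1}^{+,new}+\lambda_\zeta\operatorname{id}$, which makes the moment-map equation at $\zeta$ automatic. Writing the off-diagonal blocks of $B_{x_1}^{+,new}$ as $\xi\colon\CC e\to V_\zeta$ and $\eta\colon V_\zeta\to\CC e$, and extending $a_{x_1},b_{x_2}$ by scalars $a_0,\gamma$ (while the new component of $A_{x_2}$ is forced to vanish by \eqref{condition-a}), the triangle relations at $x_1$ and $x_2$ collapse to the two compatibility equations $\eta A_{x_1}+a_0 b_{x_1}=0$ and $A_{x_2}\xi=\gamma a_{x_2}$. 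At the same time \eqref{condition-S2} at $x_1$ demands that $e$ lie in the $B_{x_1}^{+,new}$-cyclic closure of $\Ima A_{x_1}+\Ima a_{x_1}$, and dually \eqref{condition-S1} at $x_2$ demands that $e$ not be trapped in a $B_{x_2}^{-,new}$-invariant subspace of $\Ker A_{x_2}^{new}\cap\Ker b_{x_2}^{new}$.

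The heart of the matter, and the step I expect to be the main obstacle, is to choose the couplings $(\xi,\eta,a_0,\gamma)$ so that both stability conditions hold together with the two compatibility equations, whose cross-terms prevent any naive block-diagonal or purely framing-driven choice. I would resolve this by a short case analysis governed by the boundary maps: when $A_{x_1}$ is not surjective one takes $a_0=0$ and a functional $\eta$ vanishing on $\Ima A_{x_1}$ yet nonzero on $V_\zeta$, which reaches $e$ and restores \eqref{condition-S2} while satisfying \eqref{condition-a} automatically; when $A_{x_1}$ is surjective one passes to the dual side, using $\xi,\gamma$ and \eqref{condition-S1} at $x_2$, where the symplectic self-duality of the triangle $\M^x$, which exchanges the cyclicity datum $(A_{x_1},a_{x_1})$ at $x_1$ with the cocyclicity datum $(A_{x_2},b_{x_2})$ at $x_2$, guarantees that at least one of the two constructions applies. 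Choosing the eigenvalue of $B^{new}$ on $\CC e$ generically, outside the spectrum of $B_\zeta$, keeps the new line from interacting with $V_\zeta$ and rules out spurious invariant subspaces. Iterating the rank-one construction $r$ times gives the general form of (ii); together with (i) this supplies exactly the input required by Step~4.
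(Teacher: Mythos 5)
Your part (i) is correct and is essentially the paper's construction: extend every $A_x$ by the identity on the new one-dimensional summands and extend $B^{\pm}$, $a$, $b$, $C$, $D$ by zero; the only cosmetic difference is that you thread the whole arc at once while the paper reduces to one wavy line at a time, and your verification of \eqref{condition-S1}, \eqref{condition-S2} and of the need for $\lambda=0$ goes through.

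Part (ii), however, has a genuine gap, located exactly where you predicted the main obstacle. Your ansatz freezes the new components of $A_{x_1}$ and $A_{x_2}$ at zero, and the parenthetical claim that the new column of $A_{x_2}$ is ``forced to vanish by \eqref{condition-a}'' is false: relation \eqref{condition-a} \emph{determines} the new row $\alpha$ of $A_{x_1}$ and column $\beta$ of $A_{x_2}$ via
\begin{equation*}
\alpha\,(B_{x_1}^- - c) \;=\; \eta A_{x_1} + a_0 b_{x_1},
\qquad
(B_{x_2}^+ - c)\,\beta \;=\; A_{x_2}\xi - \gamma a_{x_2},
\end{equation*}
and these are nonzero in general; setting $\alpha=\beta=0$ is an extra constraint that produces your two compatibility equations but destroys the construction. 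Concretely, take $V_{x_1}^-=\CC^2$, $V_\zeta=\CC$, $A_{x_1}=\begin{bmatrix}1&0\end{bmatrix}$, $b_{x_1}=\begin{bmatrix}0&1\end{bmatrix}$ (a legitimate triangle: \eqref{condition-S1} and \eqref{condition-S2} hold). Then $\eta A_{x_1}+a_0 b_{x_1}=0$ forces $\eta=0$ and $a_0=0$, so $B^{+,new}_{x_1}$ is block upper-triangular and $V_\zeta\oplus 0$ is a proper $B^{+,new}_{x_1}$-invariant subspace containing $\Ima A^{new}_{x_1}+\Ima a^{new}_{x_1}$: condition \eqref{condition-S2} fails at $x_1$ for \emph{every} choice of $\xi,\gamma$. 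Note that here $A_{x_1}$ is surjective, so your case analysis sends you to ``the dual side'' — but $\xi$ and $\gamma$ only influence \eqref{condition-S1} at $x_2$, and your duality claim that ``at least one of the two constructions applies'' is not enough: both \eqref{condition-S2} at $x_1$ and \eqref{condition-S1} at $x_2$ must hold simultaneously (a dual example with $A_{x_2}$ injective and $a_{x_2}\notin\Ima A_{x_2}$ defeats the $x_2$ side in the same way, since then $A_{x_2}\xi=\gamma a_{x_2}$ forces $\xi=0$, $\gamma=0$ and $\CC(0,1)$ is an invariant line inside $\Ker A^{new}_{x_2}\cap\Ker b^{new}_{x_2}$).

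The repair is precisely to release the frozen components, which is what the paper does: keep $B^{new}_\zeta=B_\zeta\oplus c$ diagonal (so $\xi=\eta=0$), set $a_0=\gamma=1$, and let \eqref{condition-a} dictate $\alpha=b_{x_1}(B_{x_1}^--c)^{-1}$ and $\beta=-(B_{x_2}^+-c)^{-1}a_{x_2}$, with $c$ chosen so that $B_{x_1}^--c$, $B_\zeta-c$, $B_{x_2}^+-c$ are invertible. Then both stability conditions for the new point reduce to the original ones by the twisting trick: for any invariant $T\supset\Ima A^{new}_{x_1}+\Ima a^{new}_{x_1}$ one shows $(B_\zeta-c)^{-1}\bigl(T\cap(V_\zeta\oplus 0)\bigr)$ is $B_\zeta$-invariant and contains $\Ima A_{x_1}+\Ima a_{x_1}$, and dually at $x_2$. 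As a smaller point, even in your non-surjective case the requirement that $\eta$ be ``nonzero on $V_\zeta$'' is necessary but not sufficient: the invariant-graph computation shows the closure is proper exactly when $\eta$ annihilates $(B_\zeta-c)^{-1}(\Ima A_{x_1}+\Ima a_{x_1})$, so a further (generic) choice of $\eta$, not just of $c$, is needed there.
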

\begin{proof}
We prove \textit{(i)}. It is enough to prove the case in which there exists a wavy line $\sigma\in\II$ such that
\begin{equation*}
    v^{new}_\zeta=\begin{cases}
        v_{\zeta} & \text{ if }\zeta\not\subset\sigma,\\
        v_{\zeta}+1 & \text{ if }\zeta\subset\sigma.
    \end{cases}
\end{equation*}
Let us fix $m=(A,B^-,B^+,a,b,C,D)\in\mu^{-1}_{(B,\Lambda,\vv)}(0)$. Then we define $m^{new}\in\mu^{-1}_{(B,\Lambda,\vv^{new})}(0)$ in the following way.

Let $x \in \Lambda$. If $x\notin\sigma$, $$(A^{new}_x,(B^-_x)^{new},(B^+_x)^{new},a_x^{new},b_x^{new})=(A_x,B^-_x,B^+_x,a_x,b_x).$$ If $x \in \sigma$, then
\begin{equation*}
    \input{Fig/Figure-1susyincrements}
\end{equation*}
Let $e\in\EE$, then we define $(C^{new}_e,D^{new}_e)$ by extending $(C_e,D_e)$ as zero on the added copy of $\mathbb{C}$. It is easy to see that $m^{new}\in\mu_{(B,\Lambda,\vv^{new})}^{-1}(0)$ concluding the proof of \textit{(i)}.

To prove \textit{(ii)}, it suffices to consider the case where $v^{new}_\zeta-v_\zeta=1$. Let us fix $m\in\mu_{(B,\Lambda,\vv)}^{-1}(0)$. Then we define $m^{new}$ in the following way.
For every arrow $e$, $$(C^{new}_e,D^{new}_e)=(C_e,D_e).$$ For every x-point $x \neq x_1,x_2$, we define 
$$(A^{new},(B^{new})^\pm,a^{new},b^{new})=(A,B^\pm,a,b).$$
Finally, given $c\in\CC$ such that $(B_{x_1}^--c)$, $(B_\zeta-c)$ and $(B_{x_2}^+-c)$ are invertible, we define:
\begin{equation*}
    \input{Fig/Figure-2susyincrements}
\end{equation*}
where $A_{x_1}^{new}$ and $A_{x_2}^{new}$ are defined by condition (\ref{condition-a}): $B^+A-AB^-+ab=0$. That is:
\begin{equation*}
    A_{x_1}^{new}=\begin{bmatrix}
        A_{x_1} \\
        +b_{x_1}(B_{x_1}^--c)^{-1}
    \end{bmatrix},
\ \ \ \ \      A_{x_2}^{new}=\begin{bmatrix}
         A_{x_2} & -(B_{x_2}^+-c)^{-1}a_{x_2}
     \end{bmatrix}.
\end{equation*}
To conclude that $m^{new} \in \mu_{(B,\Lambda,\vv^{new})}^{-1}(0)$, the only non-trivial conditions we need to verify are the condition (\ref{condition-S1}) for $\Bigl(A^{new}_{x_2}, (B^{new})^\pm_{x_2}, a^{new}_{x_2}, b^{new}_{x_2}\Bigr)$ and the condition (\ref{condition-S2}) for $\Bigl(A^{new}_{x_1},(B^{new})^\pm_{x_1}, a^{new}_{x_1}, b^{new}_{x_1}\Bigr)$.

We begin with condition (\ref{condition-S1}). 
Consider $S\subset\Ker A^{new}_{x_2} \cap \Ker b^{new}_{x_2}$, $B^{new}_\zeta$-invariant. By construction,
\begin{equation}\label{(S1)2}
    \begin{aligned}
    \Ker A^{new}_{x_{2}}\cap\Ker b^{new}_{x_2}= &\Bigl\{\begin{bmatrix} s \\ -b_{x_2}s \end{bmatrix} \ \vert \ A_{x_2}s=-(B_{x_2}^+-c)^{-1}a_{x_2}b_{x_2}s\Bigr\}
\end{aligned}
\end{equation}

Let us consider the projection:
\begin{equation*}\bar{S}=\Bigl\{s\in\CC^{v_\zeta} \ \vert \ \begin{bmatrix} s \\ -b_{x_2}s \end{bmatrix} \in S\Bigr\}\subset\CC^{v_\zeta}\oplus\{0\}\subset\CC^{v_\zeta}\oplus\CC.\end{equation*} Then:
\begin{enumerate}
    \item $(B_\zeta-c)\bar{S}\subset \Ker A_{x_2}\cap \Ker b_{x_2}$. Indeed, since $S$ is $B^{new}_\zeta$-invariant, from (\ref{(S1)2}) it suffices to notice that for every $s\in\bar{S}$
\begin{equation*}
    \begin{bmatrix}
        B_\zeta s \\
        -cb_{x_2}s
    \end{bmatrix}-c    \begin{bmatrix}
        s \\
        -b_{x_2}s
    \end{bmatrix}=    \begin{bmatrix}
        (B_\zeta-c) s \\
        0
    \end{bmatrix}\in S.
\end{equation*}
    \item $(B_\zeta-c)\bar{S}$ is $B_\zeta$-invariant. Indeed, since $S$ is $B^{new}_\zeta$-invariant, $\bar{S}$ is $B_\zeta$-invariant.
\end{enumerate}
Therefore, since $(A_{x_2},B_{x_2}^\pm,a_{x_2},b_{x_2})$ satisfies condition (\ref{condition-S1}), $(B_\zeta-c)\bar{S}=0$. By assumption, $(B_\zeta-c)$ is invertible and therefore $\bar{S}=0$. It follows by (\ref{(S1)2}) that $S=0$.

Let us prove condition (\ref{condition-S2}) for $\Bigl(A^{new}_{x_1},(B^{new})^\pm_{x_1}, a^{new}_{x_1}, b^{new}_{x_1}\Bigr)$ by using a dual argument. Let us consider $T\subset\CC^{v_\zeta}\oplus\CC$ such that $\Ima A^{new}_{x_1} + \Ima a_{x_1}^{new}\subset T$ and $T$ is $B^{new}_\zeta$-invariant. By construction
\begin{equation}\label{(S2)1}
    \Ima A^{new}_{x_1} + \Ima a_{x_1}^{new}=\Bigl\{ \begin{bmatrix}
        A_{x_1}s+a_{x_1}\lambda \\
        b_{x_1}\bigl(B_{x_1}^--c\bigr)^{-1}s+\lambda
    \end{bmatrix} \ \vert \ s\in\CC^{v_{x_1}^-},\ \lambda\in\CC \Bigr\}.
\end{equation}
Let us denote by $T_0\oplus0$ the intersection $T\cap \bigl(\CC^{v_\zeta}\oplus0\bigr)$. Then:
\begin{enumerate}
    \item $\Ima A_{x_1} + \Ima a_{x_1} \subset \bigl(B_\zeta-c\bigr)^{-1}T_0$. 
    Indeed, since $T$ is $B^{new}_\zeta$-stable, from (\ref{(S2)1}) it suffices to notice that for every $s\in\CC^{v_{x_1}^-}$ and $\lambda\in\CC$, \begin{equation*}
    \begin{gathered}
        \begin{bmatrix}
        B_\zeta\bigl(A_{x_1}s+a_{x_1}\lambda\bigr) \\
        c(b_{x_1}\bigl(B_{x_1}^--c\bigr)^{-1}s+\lambda)
    \end{bmatrix}-c\begin{bmatrix}
        A_{x_1}s+a_{x_1}\lambda \\
        b_{x_1}\bigl(B_{x_1}^--c\bigr)^{-1}s+\lambda
    \end{bmatrix}\\
    =\begin{bmatrix}
     \bigl(B_\zeta-c\bigr)\bigl(A_{x_1}s+a_{x_1}\lambda\bigr) \\
       0
    \end{bmatrix}\in T
    \end{gathered}
    \end{equation*}
\item $\bigl(B_\zeta-c\bigr)^{-1}T_0$ is $B_\zeta$-invariant. Indeed, since $T$ is $B^{new}_\zeta$-invariant, $T_0$ is $B_\zeta$-invariant.
\end{enumerate}
Since $(A_{x_1},B_{x_1}^\pm,a_{x_1},b_{x_1})$ satisfies condition (\ref{condition-S2}), we have $T_0=\CC^{v_\zeta}$ and therefore $\CC^{v_\zeta}\oplus0\subset T$. By definition of $a^{new}_{x_1}$ we can deduce $T=\CC^{v_\zeta}\oplus\CC$.
\end{proof}
\begin{Corollary}\label{corollarysupersymmetricincrementsforbow}
    Let $(B,\Lambda,\vv)$ be a bow diagram and $(B,\Lambda,\vv^{new})$ a bow diagram obtained from $(B,\Lambda,\vv)$ via supersymmetric increments between x-points. If there exists $\lambda\in\CC^{\II}$ such that $\mu^{-1}_{(B,\Lambda,\vv)}(\lambda)\neq\varnothing$, then $\mu^{-1}_{(B,\Lambda,\vv^{new})}(\lambda)\neq\varnothing$.
\end{Corollary}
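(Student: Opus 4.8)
The plan is to reduce the statement to part \textit{(ii)} of Theorem~\ref{supersymmetricincrementsforbows} by decomposing the increment into elementary pieces and using Hanany--Witten transitions to clear arrows out of the relevant arcs. Throughout I use that $\mu^{-1}_{(B,\Lambda,\vv)}(\lambda)\neq\varnothing$ forces $\vv\in\NN^{\IS}$, and that by Lemma~\ref{LEMMASW} every Hanany--Witten equivalent diagram likewise has non-negative dimensions. By definition, a supersymmetric increment between x-points is a composition of elementary operations, each adding a fixed non-negative amount to all segments on one of the two arcs cut out by a pair of x-points. Decomposing each such operation into single added D3-branes, it suffices to treat an \emph{elementary arc increment}: for a pair $x,x'$ and one of the arcs $\gamma$ between them, the passage $\vv\mapsto\vv^{new}$ adding $1$ to every segment contained in $\gamma$ and leaving the others unchanged. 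So I reduce to showing that an elementary arc increment preserves non-emptiness of $\mu^{-1}(\lambda)$.

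If $\gamma$ contains no arrows, its segments all lie on a single wavy line and are exactly the segments between the consecutive x-points $x=y_0,y_1,\dots,y_m=x'$ along $\gamma$. I would then increment them one at a time, applying Theorem~\ref{supersymmetricincrementsforbows}\textit{(ii)} to the segment between $y_j$ and $y_{j+1}$ for $j=0,\dots,m-1$; since the x-points are never moved, each pair $y_j,y_{j+1}$ stays consecutive on the same wavy line and non-emptiness of $\mu^{-1}(\lambda)$ is preserved at every stage. The cumulative effect is precisely the elementary arc increment on $\gamma$.

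To treat an arc $\gamma$ that does contain arrows, I would first apply a sequence of Hanany--Witten transitions moving every arrow off $\gamma$ onto the complementary arc, so that the transformed arc $\tilde\gamma$ (still bounded by $x,x'$) is arrow-free. The crucial point is that performing the \emph{same} sequence of transitions on the incremented diagram turns the elementary arc increment on $\gamma$ into the elementary arc increment on $\tilde\gamma$. Geometrically this is Remark~\ref{Rem-susyincrements}: the added D3-brane is unfixed, with both endpoints on the D5-branes $x,x'$, so no transition creates or annihilates it, and it is merely rerouted. At the level of dimension vectors it follows from the affine-linearity of the rule $v'=v^-+v^++1-v$: its linear part sends a local increment $(\delta^-,\delta,\delta^+)$ on the three segments around a swap to $\delta'=\delta^-+\delta^+-\delta$, and a short case check---using that arc boundaries occur only at x-points, never at arrows---shows that this again describes the increment pattern of an arc bounded by $x,x'$. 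Since Hanany--Witten transitions preserve non-emptiness of $\mu^{-1}(\lambda)$ (via the construction underlying Theorem~\ref{HWtransitionsforbow}) and keep all dimensions non-negative, I may pass to $\tilde\gamma$, apply the arrow-free case, and transport the conclusion back through the inverse transitions.

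The main obstacle is exactly this compatibility in the third step: verifying that the elementary arc increment commutes with Hanany--Witten transitions, so that the same sequence of swaps carries $(B,\Lambda,\vv^{new})$ to the elementary arc increment of the cleared diagram. Everything else is bookkeeping, but this commutation is the one place that requires care; the affine-linearity of the H--W rule reduces it to the elementary computation indicated above, which matches the brane-theoretic intuition.
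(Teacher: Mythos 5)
Your proof is correct and takes essentially the same route as the paper's: clear the arc between the two x-points of all arrows via Hanany--Witten transitions (Theorem~\ref{HWtransitionsforbow}), then handle the now arrow-free arc segment-by-segment using Theorem~\ref{supersymmetricincrementsforbows}\textit{(ii)}, and transport back. Your explicit check that the arc increment commutes with the transitions, via the affine-linearity of $v'=v^-+v^++1-v$, is precisely the unstated content of the paper's assertion that increasing the dimensions between $x_1$ and $x_2$ in the original diagram corresponds to increasing them in the cleared diagram.
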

\begin{proof}
We notice that $x_1$ and $x_2$ divide the bow diagram in two arcs, which we denote by $L$ and $R$. Without loss of generality, we can assume that $\vv^{new}$ and $\vv$ coincide on every segment in $L$. We can apply Hanany--Witten transitions to move x-points along the part $R$ of the bow diagram in order to obtain an equivalent bow diagram having no arrows between $x_1$ and $x_2$. Then, increasing the dimension vectors between $x_1$ and $x_2$ in the first bow diagrams corresponds to increase the dimension vectors between $x_1$ and $x_2$ in the second bow diagram. The claim follows from Theorem \ref{supersymmetricincrementsforbows} and Theorem \ref{HWtransitionsforbow}.
\end{proof}
\begin{Remark}\label{Rem.susyincrementsforbowdiagramsingeneraltype}
We notice that Theorem \ref{supersymmetricincrementsforbows} applies to bow varieties with every underlying quiver, although we do not currently have an interpretation in terms of brane diagrams for the general case. For a quiver description of these varieties, see \cite{gaibisso2024quiver}.
\end{Remark}
\begin{Remark}
 Point $(i)$ of Theorem \ref{supersymmetricincrementsforbows} does not generalize to  $\lambda\neq0$. Indeed, due to the correspondence between cobalanced bow varieties and Nakajima quiver varieties \cite{cherkis2011instantons}, \cite{NT17}, this would imply that every Nakajima quiver variety with a trivial stability parameter is non-empty, which is false.
\end{Remark}
Thus, to conclude, we need to prove the claim for finite type A bow diagrams.
\begin{Theorem}\label{Theorem-susyfinitetypeAimpliesnonemptiness}
    If $(B,\Lambda,\vv)$ is a finite type A supersymmetric bow diagram, then $\mu^{-1}(0) \neq \varnothing$.
\end{Theorem}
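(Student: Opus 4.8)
The plan is to reduce the finite type A statement to one completely explicit configuration and to build the general solution from it using the increment theorems already proved. First I would record the elementary observation that, since the stability parameter is trivial, $\M_{0,0}=\mu^{-1}(0)\GIT_0\GG$ is the affine quotient, so that $\mu^{-1}(0)\neq\varnothing$ is equivalent to $\M_{0,0}\neq\varnothing$; by Theorem~\ref{HWtransitionsforbow} (applied with $\lambda=\theta=0$) this property is invariant under Hanany--Witten transitions, so I may replace $(B,\Lambda,\vv)$ by any Hanany--Witten equivalent, in particular separated, representative. If $nw=0$, every D3-brane is unfixed, the diagram is obtained from the zero diagram (whose unique point $0$ lies in $\mu^{-1}(0)$) by supersymmetric increments between arrows or between x-points, and Theorem~\ref{supersymmetricincrementsforbows} together with Corollary~\ref{corollarysupersymmetricincrementsforbow} settles this case; hence I assume $n,w>0$.

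Next I would strip the unfixed D3-branes. Since the diagram is supersymmetric it originates from a supersymmetric (finite type A) brane diagram $\mathcal{B}$; removing its unfixed D3-branes one at a time preserves supersymmetry (Remark~\ref{Rem-susyincrements}), and once they are all gone one obtains a \emph{taut} supersymmetric brane diagram $\mathcal{B}_0$ in which every D3-brane is fixed, with associated bow subdiagram $(B,\Lambda,\vv_0)$, $\vv_0\leq\vv$. By construction $(B,\Lambda,\vv)$ is recovered from $(B,\Lambda,\vv_0)$ by a sequence of supersymmetric increments (Definition~\ref{Def-susyincrements}), each of which has both endpoints on NS5-branes, i.e.\ is an increment between arrows, or both on D5-branes, i.e.\ an increment between x-points. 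Applying Theorem~\ref{supersymmetricincrementsforbows}(i) and Corollary~\ref{corollarysupersymmetricincrementsforbow} iteratively, it therefore suffices to prove $\mu^{-1}_{(B,\Lambda,\vv_0)}(0)\neq\varnothing$ for the taut diagram.

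Finally I would produce an explicit point of $\mu^{-1}(0)$ for a taut finite type A supersymmetric diagram by induction on the number of $5$-branes, following the combinatorial peeling in the proof of Proposition~\ref{Prop-finitetypeAsupersymmetry}. Supersymmetry (Definition~\ref{definitionsupersymmetry}) forces at most one winding-number-$0$ fixed D3-brane between each NS5--D5 pair, so in the taut case the dimension profile is a rigid staircase with no removable plateaus; I would peel off an extreme $5$-brane together with its fixed D3-branes, apply the inductive hypothesis to the smaller taut diagram to obtain a solution $m'$, and extend $m'$ to a solution $m$ on the larger diagram by defining the reinstated maps $A,B^{\pm},a,b$ and $C,D$ in block-triangular form, the fixed D3-branes contributing the rank-one framing data $a,b$. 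The off-diagonal blocks are then forced by condition~(\ref{condition-a}) and by the moment-map equations~\eqref{momentmapequation}, exactly as in the proof of Theorem~\ref{supersymmetricincrementsforbows}. \emph{The main obstacle is precisely this gluing step}: the stability conditions~(\ref{condition-S1}) and~(\ref{condition-S2}) are open rather than closed, so preserving them under the extension requires choosing a scalar $c$ that is a regular value for the relevant $B$-operators and building the new blocks out of $(B-c)^{-1}$; one then verifies~(\ref{condition-S1}) and~(\ref{condition-S2}) by the same projection-and-quotient argument employed there. The base cases $n=0,1$ are checked directly, the case $n=1$ mirroring the explicit brane diagram constructed in Proposition~\ref{Prop-finitetypeAsupersymmetry}.
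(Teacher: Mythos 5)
Your first two reductions coincide with the paper's: pass to a separated representative via Theorem~\ref{HWtransitionsforbow}, strip the unfixed D3-branes from an associated supersymmetric brane diagram, and recover $(B,\Lambda,\vv)$ from the resulting subdiagram $(B,\Lambda,\vv_0)$ by supersymmetric increments, invoking Theorem~\ref{supersymmetricincrementsforbows} and Corollary~\ref{corollarysupersymmetricincrementsforbow}. Where you diverge is the taut case, and that is where your argument has a genuine gap. The paper does not induct on $5$-branes at all: it observes that in finite type A every fixed D3-brane has winding number $0$, so after ordering the multiplicities $l_1\geq\cdots\geq l_w$ of fixed branes at the x-points, sequentially moving $x_i$ through $\bigcirc_1,\dots,\bigcirc_{l_i}$ annihilates every fixed brane; hence the taut diagram is Hanany--Witten equivalent to a diagram $(B,\Lambda',0)$ with zero dimension vector, whose moment map equation trivially has the solution $0$, and Theorem~\ref{HWtransitionsforbow} transfers non-emptiness back. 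No gluing or stability verification is needed.

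Your inductive peeling, by contrast, requires extending a solution across a \emph{fixed} D3-brane, and this is not "exactly as in the proof of Theorem~\ref{supersymmetricincrementsforbows}." That theorem's scalar-$c$ construction handles only unfixed increments: both new endpoints contribute rank-one data $(a,b)$ at x-points, and the $(C,D)$ maps are extended by zero between spaces incremented on \emph{both} sides. For a fixed brane one endpoint lies on an arrow, where there is no framing data; the increment raises $v_{h(e)}$ but not $v_{t(e)}$ (or vice versa), so the new $(C_e,D_e)$ act between spaces of unequal incremented dimensions and must satisfy the moment map equations~\eqref{momentmapequation} with nontrivial $B$-contributions --- one cannot extend by zero, and the off-diagonal blocks are not simply "forced." Carrying this out, including the verification of~(\ref{condition-S1}) and~(\ref{condition-S2}), amounts to re-proving the brane-creation direction of Nakajima--Takayama's Hanany--Witten isomorphism \cite[Proposition 7.1]{NT17}, which is substantially more involved than the increment argument and which you have only asserted. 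Either supply that extension in full, or replace the induction entirely by the annihilation argument above, which closes the proof with results already available.
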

\begin{proof}
Without loss of generality (Theorem \ref{HWtransitionsforbow}), we can assume $(B,\Lambda,\vv)$ is separated. Let $\mathcal{B}$ denote a supersymmetric brane diagram associated with this bow diagram. Let us denote by $l_i$ the number of fixed D3-branes attached to $x_i$. Without loss of generality, we can assume that $l_1 \geq l_2 \geq \dots \geq l_w$ and that the $l_i$ NS5-branes connected to $x_i$ are the ones corresponding to $\bigcirc_1,\dots,\bigcirc_{l_i}$. Note that $v_0 = l_1 + \cdots + l_w$. 

Let $\mathcal{B}'$ be the brane diagram obtained by removing all unfixed D3-branes from $\mathcal{B}$. We denote by $(B,\Lambda,\vv')$ the bow diagram associated with $\mathcal{B}'$. Then, by performing Hanany--Witten transitions, we can eliminate all the D3-branes in $\mathcal{B}'$. In fact, due to the ordering assumption on $l_1, \dots, l_w$, this can be achieved by sequentially moving $x_i$ through $\bigcirc_1, \dots, \bigcirc_{l_i}$ for each $i \in {1, \dots, w}$. Consequently, $(B, \Lambda, \vv')$ is Hanany--Witten equivalent to a bow diagram $(B, \Lambda', 0)$ that admits a solution for $\mu_{(B, \Lambda', 0)} = 0$. From the interpretation of Hanany--Witten transitions in terms of bow varieties, (Theorem \ref{HWtransitionsforbow}), it follows that $\mu^{-1}_{(B, \Lambda, \vv')}(0)\neq\varnothing$. 

Since $(B, \Lambda, \vv)$ is obtained from $(B,\Lambda,\vv')$ via supersymmetric increments, it follows from Theorem \ref{supersymmetricincrementsforbows} that $\mu^{-1}_{(B, \Lambda, \vv)}(0)\neq\varnothing$.
\end{proof}
\subsection{Proof of Step 5}\label{s.step5}
We prove that a bow diagram is Hanany–Witten equivalent to a separated diagram satisfying the supersymmetry inequalities (Definition \ref{HWinequalities}) if and only if it satisfies the stratum condition (Definition \ref{def:stratumcondition}).

While we present an algebraic proof, we also provide combinatorial motivation for our steps using brane systems. These arguments could, in principle, be extended to give a direct proof of the equivalence between supersymmetry and the stratum condition, that is $(e)\Leftrightarrow(f)$. However, we do not develop such a proof in detail.

The main idea behind the proof is as follows: We anticipate that these conditions are equivalent to supersymmetry, and we have already shown that determining the supersymmetry of affine type A diagrams reduces to the finite type A case. This observation suggests that a similar reduction strategy can be applied here, which underpins the approach we take in the proof.
\begin{Proposition}\label{prop.inequalitiesstratum}
    A separated bow diagram of affine type A satisfies supersymmetry inequalities if and only if it satisfies the stratum condition.
\end{Proposition}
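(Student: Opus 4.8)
The plan is to prove the two implications after reducing to a convenient representative. Since supersymmetry is a Hanany--Witten invariant notion and, by Steps 2 and 3 (the chain $(d)\Rightarrow(e)\Rightarrow(c)\Rightarrow(d)$), satisfying the supersymmetry inequalities for one separated diagram is equivalent to supersymmetry and hence to every Hanany--Witten equivalent separated diagram satisfying them, while the stratum condition is Hanany--Witten invariant by definition, I may replace the given diagram by any separated representative of its class. Using Lemma \ref{Lemma.reducing} I pass to $\mathcal{B}(\prescript{t}{}{\lambda}_s,\mu_i,v)$ with $w > v_0 - v_{-w} = |\prescript{t}{}{\lambda}| \geq 0$, which is exactly the regime where Lemma \ref{Affinestratumcondition} restates the stratum condition as the existence of a dominant $(\Liegl_w)_{\text{aff}}$-weight $\kappa$ of level $n$ with $|\kappa|=|\mu|$, $\kappa\geq\mu$, and the shifted transpose inequality $(3)$.

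For the implication stratum condition $\Rightarrow$ supersymmetry inequalities I would argue by monotonicity. Such a $\kappa$ corresponds to a separated subdiagram $\mathcal{B}(\prescript{t}{}{\kappa}_s,\mu_i,v')$ that is Hanany--Witten equivalent to a balanced diagram $\mathcal{B}(\kappa,\mu)$ with $\kappa\geq\mu$, hence supersymmetric by Remark \ref{remarksupersymmetryforbalance}, and therefore satisfies the supersymmetry inequalities. Because the subdiagram carries the same x-point differences $\mu_i$, the increment $v_\zeta-v'_\zeta\geq 0$ is constant, say equal to $a$, along the entire wavy line carrying the x-points, and nonnegative on the arrow segments. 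Substituting into Definition \ref{HWinequalities}, each $\cD^t_{s,k}$ changes exactly by $\Delta v_s\geq 0$ and each $\aD^t_{n+1-s,w+1-k}$ by $\Delta v_{n-s}\geq 0$: the three x-point contributions enter with total coefficient $1+(t-1)-t=0$ and cancel. Thus the inequalities for the original diagram follow from those for the supersymmetric subdiagram.

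For the converse I would construct $\kappa$. Assuming the supersymmetry inequalities, the diagram is supersymmetric by the established equivalence $(d)\Leftrightarrow(e)$, so there is a supersymmetric brane diagram realizing it; the goal is to extract a maximal supersymmetric subdiagram that keeps $\mu_i$ fixed and is Hanany--Witten equivalent to a balanced one, i.e. whose arrow data $\prescript{t}{}{\kappa}_s$ forms a generalized Young diagram in $\Y_n^w$. Concretely, I expect to take $\kappa$ to be the smallest dominant weight of level $n$ and charge $|\mu|$ dominating $\mu$ whose associated staircase of arrow differences fits underneath the dimensions of the original diagram, and then to verify conditions $(1)$--$(3)$ of Lemma \ref{Affinestratumcondition}; the point is that the inequalities $\cD^t_{s,k}\geq 0$ are precisely the lower bounds on the dimensions guaranteeing enough room to insert this staircase while preserving $\kappa\geq\mu$.

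The main obstacle is exactly this construction: converting the infinite family of dimension inequalities into the single weight-theoretic statement ``there exists a dominant $\kappa\geq\mu$ with $\prescript{t}{}{\kappa}$ a generalized Young diagram dominating $\prescript{t}{}{\lambda}$.'' I would handle it by reducing the existence question to the finite type A situation, where the finite-type characterization of the stratum condition in \S\ref{s.stratumcriterion} collapses to the dominance comparison $\lambda\geq\kappa\geq\mu$ and existence can be settled by an explicit choice of $\kappa$, and then transporting this back to the affine setting through the generalized-Young-diagram and level-rank formalism of Appendix \ref{s.weights}, carefully tracking the affine shift by $\delta$ that appears in condition $(3)$ and accounts for the winding of fixed D3-branes.
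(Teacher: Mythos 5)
Your overall route coincides with the paper's: normalize via Lemma \ref{Lemma.reducing} to the regime $w>\vert\prescript{t}{}{\lambda}\vert\geq0$, reformulate the stratum condition through Lemma \ref{Affinestratumcondition}, and reduce the hard direction to finite type A with a greedy construction of $\kappa$. Your monotonicity argument for ``stratum $\Rightarrow$ inequalities'' is correct and in fact slightly more direct than the paper's treatment: the paper first reduces \emph{both} conditions to finite type (Propositions \ref{Prop.reducing} and \ref{Lemma.reducingstratum}) and only then makes the analogous estimate $jd+v_{-j}+v_d-v_0\geq jd+v_{-j}-\sum_{s=1}^{d}\prescript{t}{}{\kappa}_s\geq0$ inside Proposition \ref{Prop.inequalitiesstratumfinite}, whereas you run the cancellation $a+(t-1)a-ta=0$ directly on $\cD^t_{s,k}$ and $\aD^t_{n+1-s,w+1-k}$ in the affine setting; both are valid, and your appeal to $(e)\Rightarrow(c)\Rightarrow(d)$ for the supersymmetric subdiagram is not circular, since Steps 2--3 and the Hanany--Witten implication precede Step 5 (the paper itself invokes them inside this proof).

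The genuine gap is the converse, which in your proposal remains a plan rather than a proof: you say ``existence can be settled by an explicit choice of $\kappa$,'' but that choice and its verification are precisely the content of the paper's Propositions \ref{Lemma.reducingstratum} and \ref{Prop.inequalitiesstratumfinite}, and they are where the supersymmetry inequalities are actually consumed. Concretely, two things are missing. First, the finite-type statement requires the greedy definition of $\prescript{t}{}{\kappa}_s$ (the maximal number of fixed D3-branes attachable to $\bigcirc_s$) and then four nontrivial checks --- that $\sum_{s}\prescript{t}{}{\kappa}_s=v_0$, that $w\geq\prescript{t}{}{\kappa}_1\geq\cdots\geq\prescript{t}{}{\kappa}_n\geq0$, that $\kappa\geq\mu$, and that $\prescript{t}{}{\kappa}\geq\prescript{t}{}{\lambda}$ --- all of which hinge on the tightness indices $j_s$ with $v_{-j_s}=\sum_{i=1}^{s}\prescript{t}{}{\kappa}_i-sj_s$ coming from maximality, combined with the inequalities $\aD^1_{s,j}\geq0$; none of this is sketched. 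Second, transporting the stratum condition across the finite-type reduction (subtracting $v_s=\min\{v_0,\dots,v_{n-1}\}$ from the arrow-adjacent dimensions) is not automatic: one must show $\kappa\mapsto\kappa\pm v_s\delta$ preserves conditions (1)--(3) of Lemma \ref{Affinestratumcondition}, and in the reverse direction that $\kappa\geq\mu+v_s\alpha_0$, which uses the identification of the coefficient $\hat{v}_0$ of $\alpha_0$ in $\kappa-\mu$ with a dimension adjacent to an arrow in the Hanany--Witten equivalent balanced diagram (Example \ref{exseparatedtobalance}) together with the minimality of $v_s$. Your phrase ``carefully tracking the affine shift by $\delta$'' names this bookkeeping but does not supply it; until both items are carried out, the implication from the supersymmetry inequalities to the stratum condition is not established.
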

\begin{proof}
    Let us consider a separated bow diagram $(B,\Lambda,\vv)$ as in Figure \ref{StypeAffine} and let us set $\prescript{t}{}{\lambda}_s=v_{s-1}-v_s$ an $\mu_i=v_{-(i-1)}-v_{-i}$ for every $s\in\{1,\dots,n\}$ and $i\in\{1,\dots,w\}$. That is, $(B,\Lambda,\vv)=\mathcal{B}(\prescript{t}{}{\lambda}_s,\mu_i,v_n):$
    
    \begin{equation}\label{diagram1}
        \input{Fig/separatedsusyinequalitiesAffine}
    \end{equation}

Up to considering a Hanany--Witten equivalent bow diagram, we can suppose $w>v_{0}-v_{-w}\geq0$; that is, $w>\vert\prescript{t}{}{\lambda}\vert=\vert\mu\vert\geq0$ (see Lemma \ref{Lemma.reducing}). Let $v_s=\min\{v_0,v_1,\dots,v_{n-1}\}$ and consider the subdiagram obtained from (\ref{diagram1}) by subtracting $v_s$ from $v_0,v_1,\dots,v_n$, that is:
\begin{equation}\label{diagram2}
    \input{Fig/separatedsusyinequalitiesAffine2}
\end{equation}
The bow diagram (\ref{diagram1}) satisfies the supersymmetry inequalities if and only if the bow diagram (\ref{diagram2}) does. The ``only if" direction was established in Proposition \ref{Prop.reducing}, while the ``if" direction follows either from a direct computation or from Theorem \ref{maintheorem} and Remark \ref{Rem-susyincrements} by noticing that (\ref{diagram1}) is obtained from (\ref{diagram2}) via supersymmetric increments.

Furthermore, the bow diagram (\ref{diagram1}) satisfies the stratum condition if and only if the bow diagram (\ref{diagram2}) does (see Proposition \ref{Lemma.reducingstratum}).

Thus, we have reduced the problem to the finite type A case, which is proved in Proposition \ref{Prop.inequalitiesstratumfinite}.
\end{proof}

\begin{Proposition}\label{Lemma.reducingstratum}
  Under the previous assumptions, the bow diagram in (\ref{diagram1}) satisfies the stratum condition if and only if the bow diagram (\ref{diagram2}) satisfies the stratum condition.
\end{Proposition}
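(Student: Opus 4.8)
The plan is to exploit the fact that (\ref{diagram2}) is obtained from (\ref{diagram1}) by subtracting the constant $v_s$ from the segments $v_0,v_1,\dots,v_n$, i.e. that (\ref{diagram1}) is recovered from (\ref{diagram2}) by a supersymmetric increment of $v_s$ between the extreme x-points $x_1$ and $x_w$, performed along the arc carrying all the arrows. We may assume $v_s\geq 0$ (otherwise (\ref{diagram1}) has a negative arrow dimension, so any subdiagram has a negative dimension there and cannot be supersymmetric, whence the stratum condition fails, and one checks the statement holds trivially). Both diagrams share the same arrow data $[\prescript{t}{}{\lambda}_s]$, and the uniform shift leaves $v_0-v_n$, hence $|\prescript{t}{}{\lambda}|=|\mu|$, unchanged; so both satisfy $w>|\prescript{t}{}{\lambda}|\geq 0$ and Lemma \ref{Affinestratumcondition} applies to each. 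I would then prove the two implications by transporting witnessing subdiagrams across the shift, using throughout that being Hanany--Witten equivalent to a balanced diagram depends only on $[\prescript{t}{}{\kappa}_s]\in\Y_n^w$ (Proposition \ref{separatedtobalancetheorem}) and is therefore insensitive to the shift.

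The direction ``(\ref{diagram2}) satisfies the stratum condition $\Rightarrow$ (\ref{diagram1}) does'' is the easy one. Starting from a witness $\mathcal{B}(\prescript{t}{}{\kappa}_s,\mu_i^{(2)},v'')$ of (\ref{diagram2}), I add $v_s$ back to its arrow segments to form $\mathcal{B}(\prescript{t}{}{\kappa}_s,\mu_i^{(1)},v''+v_s)$. This is again a subdiagram of (\ref{diagram1}) because the uniform shift preserves the componentwise inequalities, it is still balanced-equivalent since $[\prescript{t}{}{\kappa}_s]$ is unchanged, and it is supersymmetric because it arises from a supersymmetric diagram by a supersymmetric increment between x-points (Remark \ref{Rem-susyincrements}, Corollary \ref{corollarysupersymmetricincrementsforbow}). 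The converse is delicate, because it requires the reverse operation, a decrement, which in general destroys supersymmetry. Here I would first observe that the stratum condition for (\ref{diagram1}) produces a supersymmetric, hence (by condition $(c)$) nonnegative, subdiagram below $\vv$, so that indeed $v_s\geq 0$. Given a witness $\mathcal{B}(\prescript{t}{}{\kappa}_s,\mu_i^{(1)},v')$ of (\ref{diagram1}), I subtract $v_s$ from its arrow segments to obtain $\mathcal{B}(\prescript{t}{}{\kappa}_s,\mu_i^{(2)},v'-v_s)$, which is automatically a subdiagram of (\ref{diagram2}) and still balanced-equivalent; the only point requiring work is that it remains supersymmetric.

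The main obstacle is exactly this last point: controlling the effect of the decrement on supersymmetry. I would handle it through Remark \ref{remarksupersymmetryforbalance}, which identifies supersymmetry of a balanced-equivalent diagram with the dominance $\lambda\geq\mu$ of its balanced representative, together with the weight dictionary induced by the shift. A short computation with fundamental weights shows that on the x-point side the shift acts by $\mu^{(2)}=\mu^{(1)}+v_s(\alpha_0-\delta)$, while $\prescript{t}{}{\lambda}^{(2)}=\prescript{t}{}{\lambda}^{(1)}+v_s\delta$ on the arrow side. Expanding $\delta=\alpha_0+\alpha_1+\dots+\alpha_{w-1}$ and using $v_s\geq 0$, one sees that the corresponding change in $\lambda-\mu$ is a nonnegative combination of simple roots, so that dominance for the balanced representative of $\mathcal{B}(\prescript{t}{}{\kappa}_s,\mu_i^{(1)},v')$ is transported to dominance for that of $\mathcal{B}(\prescript{t}{}{\kappa}_s,\mu_i^{(2)},v'-v_s)$. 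Equivalently, I could run the whole argument numerically via Lemma \ref{Affinestratumcondition}, setting up the bijection $\kappa\leftrightarrow\kappa'$ between admissible dominant weights dictated by these relations and checking that conditions $(1)$--$(3)$ for the two diagrams correspond; in that formulation the subtle step is ensuring that the transported weight $\kappa'$ is again dominant, which is precisely where the nonnegativity $v_s\geq 0$ and the constraint $[\prescript{t}{}{\kappa}_s]\in\Y_n^w$ are used.
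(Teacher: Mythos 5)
Your overall architecture matches the paper's: reduce both diagrams to the numerical criterion of Lemma \ref{Affinestratumcondition} and transport witnessing weights across the shift by $v_s$, the increment direction being easy and the decrement direction carrying all the difficulty. Your easy direction is correct and is essentially the paper's first implication in diagrammatic language. But there is a genuine gap in the hard direction, exactly at the point you flag as delicate. Compute the shift honestly: with $\hat{\kappa}=\kappa-v_s\delta$ and $\hat{\mu}=\mu+v_s\alpha_0-v_s\delta$, one gets $\hat{\kappa}-\hat{\mu}=(\kappa-\mu)-v_s\alpha_0$, so the change in the dominance difference is $-v_s\alpha_0$, a \emph{nonpositive} multiple of a simple root when $v_s\geq0$ --- not, as you assert, a nonnegative combination of simple roots. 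Expanding $\delta=\alpha_0+\cdots+\alpha_{w-1}$ cannot rescue this: the two $\delta$-terms cancel and the bare $-v_s\alpha_0$ remains. Writing $\kappa-\mu=\sum_{i=0}^{w-1}\hat{v}_i\alpha_i$, supersymmetry of the original witness only gives $\hat{v}_i\geq0$, whereas you need the strictly stronger inequality $\hat{v}_0\geq v_s$, and this can fail for an individual witness: for $n=1$, $w=2$, $(v_0,v_1,v_{-1})=(2,2,1)$, one has $v_s=2$ and $\kappa=\Lambda_0+\delta$ is a legitimate witness for (\ref{diagram1}) with $\hat{v}_0=1<v_s$, whose decrement produces a subdiagram with a negative dimension. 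So no purely formal transport (nor the ``bijection $\kappa\leftrightarrow\kappa'$'' you sketch) works; the witness sets do not correspond termwise. The paper closes exactly this hole with Example \ref{exseparatedtobalance}: under the standing hypothesis $w>\vert\prescript{t}{}{\lambda}\vert\geq0$ (which forces $w>\prescript{t}{}{\kappa}_1\geq0$), the coefficient $\hat{v}_0=\langle\kappa,d\rangle$ is identified with one of the dimensions adjacent to an arrow, and the minimality of $v_s$ among the arrow-adjacent dimensions then yields $\hat{v}_0\geq v_s$. It is a telling symptom that neither $w>\vert\prescript{t}{}{\lambda}\vert\geq0$ nor the minimality of $v_s$ enters your argument anywhere: these are precisely the ingredients that make the decrement safe. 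You also mislocate the subtlety in your closing sentence: dominance of $\hat{\kappa}$ itself is automatic, since $\hat{\kappa}$ differs from $\kappa$ by a multiple of $\delta$ and dominance is insensitive to $\delta$-shifts (Definition \ref{definitiondominanceorder}); the subtle condition is the order relation $\hat{\kappa}\geq\hat{\mu}$.

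A secondary issue: your disposal of the case $v_s<0$ is not correct as stated. If $v_s<0$ the stratum condition indeed fails for (\ref{diagram1}), but (\ref{diagram2}), whose arrow-adjacent dimensions $v_j-v_s$ are shifted upward, can perfectly well satisfy it (take $n=1$, $w=2$, $(v_0,v_1,v_{-1})=(-1,-1,0)$, where (\ref{diagram2}) has all dimensions zero), so the equivalence is not ``trivially true'' there. Rather, $v_s\geq0$ should be regarded as part of the standing context in which the proposition is invoked inside Proposition \ref{prop.inequalitiesstratum}: on either side of that equivalence the assumed condition forces all dimensions, hence $v_s$, to be nonnegative, so the proposition is only ever applied when $v_s\geq0$ --- and, as your own easy direction already needs $v_s\geq0$ for the supersymmetric-increment argument, you cannot derive it from the stratum condition for (\ref{diagram2}) alone.
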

\begin{proof}
Let us notice that both the diagrams (\ref{diagram1}) and (\ref{diagram2}) satisfy conditions in Lemma \ref{Affinestratumcondition} and therefore we reduce the stratum condition to the existence of certain $(\Liegl_w)_{\text{aff}}$-weights. 

Suppose to have a $(\Liegl_w)_{\text{aff}}$-weight $\hat{\kappa}$ as in the stratum condition in Lemma \ref{Affinestratumcondition} for the bow subdiagram (\ref{diagram2}). We claim that $\kappa=\hat{\kappa}+v_s\delta$ is a weight for (\ref{diagram1}) as in Lemma \ref{Affinestratumcondition}. The only non trivial inequality we have to check is $\kappa\geq\mu$ as $(\Liesl_w)_{\text{aff}}$-weights using (\ref{eq.diagramstoweights}). It follows by noticing that, as $(\Liesl_w)_{\text{aff}}$-weights,  $\kappa-v_s\delta=\hat{\kappa}\geq\hat{\mu}=\mu+v_s\alpha_0-v_s\delta$, where $\hat{\mu}$ is the weight corresponding to $[\mu_1-v_s,\mu_2,\dots,\mu_{w-1},\mu_{w}+v_s]$ with $\langle\hat{\mu},d\rangle=0$. 

Conversely, let us suppose to have a weight $\kappa$ for the former diagram (\ref{diagram1}). As above, $\kappa-v_s\delta=\hat{\kappa}$ is a well-defined weight for the reduced diagram (\ref{diagram2}) and we only need to check $\hat{\kappa}\geq \mu+v_s\alpha_0-v_s\delta$, that is $\kappa\geq\mu+v_s\alpha_0$. This immediately follows from our assumption $w>\vert\prescript{t}{}{\lambda}\vert\geq0$. Indeed, we know that $\kappa-\mu=\sum\limits_{i=0}^{w-1}\hat{v}_i\alpha_i$ where $\hat{v}_i$ are the dimensions in the Hanany--Witten equivalent balanced bow diagram (\ref{balancediagram}).
Then, as shown in Example \ref{exseparatedtobalance}, $\hat{v}_0$ coincide with one of the dimensions adjacent to an arrow and $v_s$ has been taken to be the minimum among those dimensions.
\end{proof}

\begin{Remark}
Proposition \ref{Lemma.reducingstratum} and Proposition \ref{Prop.reducing} play analogous roles: each reduces the task of verifying the corresponding condition to finite type. Also, let us notice that in the proofs of both the propositions, to go from the bow subdiagram to the original one we are applying some supersymmetric increments which always preserve supersymmetry. Indeed, we do not use the assumption on the charges (or equivalently on the difference between the dimensions $v_0,v_n=v_{-w}$). However, as for supersymmetry inequalities, for the reverse arrows we need it. Indeed, in the most generality, there is no a safe way to reduce the dimension vector without breaking supersymmetry. See also Remark \ref{Rem-susyincrements} and Remark \ref{rem:susydecrements}.
\end{Remark}

\begin{Proposition}\label{Prop.inequalitiesstratumfinite}
Consider a separated bow diagram of finite type A:
\begin{equation}\label{separatedsusystrata}
    \input{Fig/separatedsusystratum}
\end{equation}
where $\prescript{t}{}{\lambda}_s=v_{s-1}-v_{s}$ and $\mu_k=v_{-(k-1)}-v_{-k}$ for every $s\in\{1,\dots,n\}$ and $k\in\{1,\dots,w\}$. Then, it satisfies supersymmetry inequalities if and only if it satisfies the stratum condition. That is, the following are equivalent:
\begin{enumerate}
    \item $\aD^1_{s,j}=sj+v_s+v_{-j}-v_0\geq0$ for every $s\in\{0,\dots,n\}$ and every $j\in\{0,\dots,w\}$.
    \item There exists a polynomial dominant $\Liegl_w$-weight $\kappa$ with level less than or equal to $n$ and $\vert \kappa \vert = \vert\mu\vert$, such that $\kappa\geq\mu$ and $\prescript{t}{}{\kappa}\leq\prescript{t}{}{\lambda}$.
\end{enumerate}
\end{Proposition}
\begin{proof}
 Suppose the bow diagram satisfies supersymmetry inequalities. Let us use the intuition coming from the relation between brane diagrams and bow diagrams to construct a weight $\kappa$ as in $(2)$. By Theorem \ref{maintheorem}, we know that it originates from a supersymmetric brane diagram with $v_0$ fixed D3-branes. Among these brane diagrams we can consider one obtained by attaching to $\bigcirc_1$ the maximum number of fixed D3-branes without breaking supersymmetry, to $\bigcirc_2$ we do the same with the remaining branes and so on. That is, the number of D3-branes attached to $\bigcirc_s$ is given by:
 \begin{equation*}
     \prescript{t}{}{\kappa}_s=\max\Bigl\{f=0,\dots,w \ \vert\  v_{-j}-\bigr(\sum\limits_{j=1}^{s-1}\prescript{t}{}{\kappa}_s-(s-1)j\bigl)\geq (f-j) \ \text{ for }j=0,\dots,w\Bigl\}
 \end{equation*}
 for every $s\in\{1,\dots,n\}$. Given such a brane diagram, we can consider the bow diagram corresponding to the brane subdiagram obtained by removing all the D3-branes stretched between NS5-branes (i.e. arrows). That is:
 \begin{equation}\label{finitebalancedseparatedsubdiagram}
     \input{Fig/finitebalancedseparatedsubdiagram}
 \end{equation}
  By construction we expect to have well-defined supersymmetric bow subdiagram. We also notice that, by moving $\bigcirc_s$ to the right of $x_{\prescript{t}{}{\kappa}_s}$ for every $s$, if well-defined, we obtain a balanced diagram. This can be proved, for example, using Remark \ref{Rem:HWandN}. In terms of brane diagrams, it follows by the interpretation of Hanany--Witten transitions for supersymmetric brane diagrams \S\ref{s.hw} and by noticing that our subdiagram originates from a brane diagram having all the D3-branes either fixed or stretched between two D5-branes (i.e. x-points).
  
Let us prove that the diagram in (\ref{finitebalancedseparatedsubdiagram}) satisfies the stratum condition. We now only use supersymmetry inequalities.

Let us start by noticing that, by maximality in the definition of $\prescript{t}{}{\kappa}_s$, for every $s\in\{1,\dots,n\}$, there exists $j_s\in\{0,\dots,\prescript{t}{}{\kappa}_s\}$ such that 
 \begin{equation}\label{eq.js}
     v_{-j_s}=\sum\limits_{i=1}^{s}\prescript{t}{}{\kappa}_i-sj_s
 \end{equation}
 First, we claim that the bow diagram (\ref{finitebalancedseparatedsubdiagram}) is well defined. That is: 
 \begin{equation*}
     \sum\limits_{s=1}^{n}\prescript{t}{}{\kappa}_s=v_0=\sum\limits_{i=1}^{w}\mu_i.
 \end{equation*}
 Indeed, by definition of $\prescript{t}{}{\kappa}_n$, we have $v_0\geq\sum\limits_{s=1}^{n}\prescript{t}{}{\kappa}_s$. 
On the other hand, from supersymmetry inequalities and (\ref{eq.js}) we have: 
\begin{equation*}
0\leq\aD^1_{n,j_n}=nj_n+v_{-j_n}-v_0=\sum\limits_{i=1}^{n}\prescript{t}{}{\kappa}_s-v_0.
\end{equation*}
Then, let us show that $w\geq\prescript{t}{}{\kappa}_1\geq\cdots\geq\prescript{t}{}{\kappa}_n\geq0$.  By definition:
\begin{equation*}
    0=v_{-w}\geq\prescript{t}{}{\kappa}_1-w
\end{equation*}
By (\ref{eq.js}) and definition of $\prescript{t}{}{\kappa}_{s+1}$:
\begin{equation*}
   \prescript{t}{}{\kappa}_s\geq j_s=j_s+\bigl(v_{-j_s}-\sum\limits_{i=1}^{s}\prescript{t}{}{\kappa}_i-sj_s\bigr)\geq \prescript{t}{}{\kappa}_{s+1}.
\end{equation*}
This is equivalent to the existence of a dominant $\Liegl_w$-weight $\kappa$ whose charge is $\vert\mu\vert$ and the level $\kappa_1$ is at most $n$ (see \S\ref{s.balance}). Therefore, we conclude if we show that $\kappa\geq\mu$ and $\prescript{t}{}{\kappa}\geq\prescript{t}{}{\lambda}$.
$\\$
Claim: $\kappa\geq\mu$.

Since $\sum\limits_{i=1}^{w}\kappa_i=\sum\limits_{i=1}^{w}\mu_i$, the claim is equivalent to $\sum\limits_{i=j}^{w}\kappa_i\leq\sum\limits_{i=j}^{w}\mu_i=v_{-(j-1)}$ for every $j\in\{1,\dots,w\}$. We also have $\kappa_j=\Bigl\vert\Bigl\{s\in\{1,\dots,w\} \ \vert \ \prescript{t}{}{\kappa}_s\geq j\Bigr\}\Bigr\vert$. Then:
\begin{align*}
    &\sum\limits_{i=j}^{w}\mu_i=v_{-(j-1)}\geq\sum\limits_{s=1}^{\kappa_j}\prescript{t}{}{\kappa}_s-(j-1)\kappa_j\\
    &=\sum\limits_{i=j}^{w-1}i\bigl(\kappa_i-\kappa_{i+1}\bigr)+w\kappa_w-(j-1)\kappa_j
    =\sum\limits_{i=j}^{w}\kappa_i
\end{align*}
where the first inequalities comes from the definition of $\prescript{t}{}{\kappa}$, the second and third equalities are straightforward from the definition of the objects involved.
$\\$
Claim: $\prescript{t}{}{\kappa}\geq\prescript{t}{}{\lambda}$.

We have to prove: $\sum\limits_{s=1}^{d}\prescript{t}{}{\kappa}_s\geq\sum\limits_{s=1}^d\prescript{t}{}{\lambda}_s=v_0-v_d$ for every $d\in\{1,\dots,n\}$.  Suppose there exists $d$ such that the inequalities does not hold. Then, since $\aD^1_{d,j}\geq0$, we have for every $j\in\{1,\dots,w\}$:
\begin{equation*}
    v_0-\sum\limits_{s=1}^{d}\prescript{t}{}{\kappa}_s > v_d \geq v_0-dj-v_{-j}.
\end{equation*}
This is in contraddiction with the maximality of $\prescript{t}{}{\kappa}_d$.

Let us prove the opposite direction. Suppose there exists a weight $\kappa$ as in $(2)$. As we have seen in \S\ref{s.balance}, we can think of it as the existence of a subdiagram (\ref{finitebalancedseparatedsubdiagram}) which is supersymmetric and Hanany--Witten equivalent to a balanced bow diagram. We know that $\sum\limits_{s=1}^{d}\prescript{t}{}{\kappa}_s\geq\sum\limits_{s=1}^{d}\prescript{t}{}{\lambda}_s=v_0-v_d$. Since we know the subdiagram is supersymmetric and this means it satisfies supersymmetry inequalities, we have:
\begin{equation*}
    jd+v_{-j}+v_d-v_0\geq jd+v_{-j}-\sum\limits_{s=1}^{d}\prescript{t}{}{\kappa}_s\geq0
\end{equation*}
for every $d\in\{1,\dots,n\}$ and $j\in\{1,\dots,w\}$. The cases $dj=0$ follow by noticing that $v_d,v_j\geq0$ since we are assuming the existence of a bow subdiagram originating from a brane diagram.
\end{proof}
\begin{Remark}
    We notice that, at the end of the previous proof, we used that supersymmetry implies supersymmetry inequalities, which follows from the original work \cite{HW}. However, one could avoid using it (explicitly) and concluding only using the properties of the involved weights.
\end{Remark}
\begin{Remark}
    In the finite case, one can see that the weight $\kappa$ obtained from the supersymmetry inequalities is the smallest possible with respect to the dominance order. As we noticed in Remark \ref{Rem:stratumconditionexistencestratum}, these $\kappa$ are in correspondence with strata of the non-deformed bow variety $\M_{0,0}$. With this interpretation: in the proof of Proposition \ref{Prop.inequalitiesstratumfinite}, we have used supersymmetry inequalities to construct the minimal stratum of $\M_{0,0}$ in the finite type A case.
\end{Remark}

\section{An algorithm for supersymmetry}\label{section4} 
In this section, we present a systematic method to detect supersymmetry in bow diagrams. Using the proof of Theorem \ref{maintheorem}, we reduce the task of arranging D3-branes into a supersymmetric configuration to verifying a finite number of inequalities, which can be seen as a numerical criterion for detecting supersymmetry. Additionally, if the bow diagram is supersymmetric, this algorithm can be used to construct an associated supersymmetric brane diagram and a (possibly non-unique) solution to $\mu=0$. See Section \ref{section5} for explicit examples.

The key idea is that every supersymmetric brane diagram can be obtained by applying supersymmetric increments and Hanany--Witten transitions to a brane diagram without D3-branes.

The reader does not need prior knowledge of the proof of the main theorem to follow this section, as all necessary results are explicitly provided.

\subsection{Determining supersymmetry for bow diagrams}\label{algorithmsupersymmetry1}
Let us fix a generic bow diagram of affine type A, denoted $(B,\Lambda,\vv)$. For the entire subsection, we will use notation in Figure \ref{StypeAffine}.  

Since the number of x-points and arrows is finite, we can apply a sequence of Hanany--Witten transitions to obtain an equivalent separated bow diagram. If, at any step of this process, a transition produces a negative dimension, then the diagram is not supersymmetric. This follows directly from the definition of supersymmetry and its invariance under Hanany--Witten transitions.

This reduces the problem to the case of a separated bow diagram $(B,\Lambda',\vv')$. By applying a finite number of Hanany--Witten transitions, we may assume that $0\leq v'_0 - v'_{-w} < w$ (see Lemma \ref{Lemma.reducing} or Remark \ref{Rem:HWandN}). As before, if any of these transitions generates a negative dimension, the diagram would not be supersymmetric.  

Given such a bow diagram, we define a bow subdiagram $(B,\Lambda'',\vv'')$ obtained by decreasing the entries of the dimension vectors adjacent to arrows, that is $v'_s$ for $s\in\{0,\dots\ n\}$, by $a=\min\{v_0,\dots,v_n \}$.

From Proposition \ref{Prop.reducing} and Theorem \ref{maintheorem}, $(B,\Lambda',\vv')$ is supersymmetric if and only if $(B,\Lambda'',\vv'')$ is supersymmetric. By definition, such a bow subdiagram is of finite type A and therefore we reduced to study supersymmetry of a finite type A bow diagram. Finally, from Proposition \ref{Prop-finitetypeAsupersymmetry} and Theorem \ref{maintheorem}, the supersymmetry of a separated finite type A diagram is equivalent to verifying a finite number of supersymmetry inequalities corresponding to checking that we can move every x-point through every arrow via Hanany--Witten transitions without generating negative dimensions.

\begin{Remark}
We do not yet have an optimal version of the previous algorithm. 

First, given a separated finite type A diagram as in Figure \ref{StypeA}, it is not necessary to check $nw$ inequalities to determine if it is supersymmetric. Indeed, if $$n'=\min\bigl(s\in\{1,\dots,n\} \ \vert \ v_{s}=0\bigr), \ \ w'=\min\bigl(k\in\{1,\dots,w\} \ \vert \ v_{-k}=0\bigr)$$ then it suffices to verify at most $\min(n',v_0)\min(w',v_0)$ inequalities, specifically, those corresponding to move $x_1,\dots,x_{w'}$ through $\bigcirc_1,\dots,\bigcirc_{n'}$.

Additionally, if we start the algorithm with a bow diagram $(B,\Lambda,\vv)$ of affine type A with a segment $\zeta$ such that $v_\zeta = 0$, then we are in the finite type A case, and the previous algorithm could become unnecessarily long. 

Another example in which the algorithm could not be optimal is given by bow diagrams that are Hanany--Witten equivalent to a balanced bow diagram. In this case, we can construct the corresponding $(\Liesl_w)_{\text{aff}}$-weights $\lambda,\mu$ and supersymmetry is equivalent to $\lambda\geq\mu$ in the dominance order. This, is equivalent to checking that the entries of the dimension vector of the equivalent balanced bow diagram are non-negative (see Section \ref{s.balance}). Similarly, we can handle the case of bow diagrams that are Hanany--Witten equivalent to a cobalanced bow diagram.

\end{Remark}
\begin{Remark}\label{Remark.b.c.d.}
In string theory, brane systems associated with quivers of type B, C, and D are well-defined (cf. \cite{GK99}, \cite{bourget2023branes}). The construction is more complex because it requires the introduction of new affine subspaces, called orientifolds, which have special properties depending on the case. Nevertheless, even in this context, there exists a notion of supersymmetry derived from quantum field theory, defining an interesting subclass of systems. For such systems, the correspondence with bow diagrams remains unclear, but it is quite common to encounter situations where a system is given but the endpoints of the D3-branes are unknown. Our algorithm can also be used to determine supersymmetry in types B, C, and D. Indeed, it is known that, mirroring the system with respect to the orientifold reduces the problem to the type A case, either finite or affine.\footnote{We thank Guhesh Kumaran and Chunhao Li for bringing these additional cases to our attention.}
\end{Remark}

\subsection{Constructing supersymmetric brane diagrams}\label{s.constructingdiagrams}
Let us consider a bow diagram $(B,\Lambda,\vv)$ and assume that the previous algorithm has been successfully applied, thus proving that it is supersymmetric. At the conclusion of this process, we obtain a supersymmetric bow diagram of finite type A $(B,\Lambda'',\vv'')$. 
 
By proceeding as in the proof of Proposition \ref{Prop-finitetypeAsupersymmetry}, we can associate to this bow diagram a corresponding supersymmetric brane diagram. Specifically, we use Hanany--Witten transitions to transform $(B,\Lambda'',\vv'')$ in a separated diagram $(B,\Lambda''',\vv''')$ as in Figure \ref{StypeA}. Then, we construct a corresponding supersymmetric brane diagram in the following way. We attach the maximum possible number of fixed D3-branes to the NS5-brane corresponding to $\bigcirc_1$ without breaking supersymmetry, ensuring that the resulting brane diagram corresponds to a bow subdiagram of $(B,\Lambda''',\vv''')$. That is, we attach one D3-brane between $\bigcirc_1$ and $x_i$ for every $i\leq f = \operatorname{max}\Bigl\{ k \in \{0,1,\dots,w\} \ \mid \ v_{-j} \geq k-j \ \ \ \ \ \forall j\in\{0,\dots,k\}\Bigr\}$. If $f=0$, then $v_0=0$ and then every brane diagram corresponding to our bow diagram is supersymmetric. We then repeat the same procedure for $\bigcirc_2$ and continue sequentially. See Example \ref{ex2} for a detailed application of this procedure to separated bow diagrams of finite type A with two arrows and two x-points.

Using Hanany--Witten transitions on brane diagrams, we obtain a supersymmetric brane diagram for $(B,\Lambda'',\vv'')$.

Finally, let us consider the reversed algorithm in subsection \ref{algorithmsupersymmetry1} to go from $(B,\Lambda'',\vv'')$ to $(B,\Lambda,\vv)$. Each step in this reversal consists of a sequence of either a Hanany--Witten transitions or a supersymmetric increment. Since the interpretation of Hanany--Witten transitions for supersymmetric brane diagrams (see \S\ref{s.hw}) and the interpretation of supersymmetric increments for brane diagrams (Definition \ref{Def-susyincrements}), we can go from the supersymmetric brane diagram associated with $(B,\Lambda'',\vv'')$ to a supersymmetric brane diagram associated with $(B,\Lambda,\vv)$.

\subsection{Constructing points of bow varieties}
Let $(B, \Lambda, \vv)$ be a supersymmetric bow diagram. We now construct a solution for $\mu_{(B,\Lambda, \vv)} = 0$ by reversing the steps of the algorithm to detect supersymmetry (see \S\ref{algorithmsupersymmetry1}).

First, consider the supersymmetric finite type A bow diagram $(B,\Lambda'',\vv'')$ obtained at the end of the algorithm (see \S\ref{s.constructingdiagrams}). By Theorem \ref{Theorem-susyfinitetypeAimpliesnonemptiness} we can construct an explicit solution to the corresponding moment map equation. Let us briefly summarize how that theorem works. Let $\mathcal{B}$ be the supersymmetric brane diagram associated with $(B,\Lambda'',\vv'')$. If we remove all the unfixed D3-branes, we have a subdiagram which is Hanany--Witten equivalent to a brane diagram without D3-branes. That is, $(B,\Lambda'',\vv'')$ can be obtained from a bow diagram $(B,\Tilde{\Lambda},0)$ via supersymmetric increments and Hanany--Witten transitions. Since the moment map equation $\mu=0$ associated with $(B,\Tilde{\Lambda},0)$ has a solution, applying (the proofs of) Theorem \ref{supersymmetricincrementsforbows} and Theorem \ref{HWtransitionsforbow} (\cite[Theorem 7.1]{NT17}), we can extend such a solution to a solution to the moment map equation $\mu=0$ for $(B,\Lambda'',\vv'')$.

For what noticed in subsection \ref{s.constructingdiagrams}; the same argument can be applied to extend this solution to a solution for the moment map equation $\mu_{(B,\Lambda, v)} = 0$.

\begin{Remark}
In the proof of Theorem \ref{supersymmetricincrementsforbows}, we outline a concrete procedure for extending solutions. However, when dealing with supersymmetric increments between x-points, this extension is not necessarily unique.
\end{Remark}

\section{Examples}\label{section5}
In this section, we present some examples illustrating the algorithm in action.
    \begin{Example}\label{ex1} Let us denote by $(B,\Lambda,\vv)$ the bow diagram:
    \begin{center}\input{Fig/notsusyexample2}
    \end{center}
From Proposition \ref{Prop-finitetypeAsupersymmetry} and Theorem \ref{maintheorem}, it is supersymmetric if and only if $\cD^1_{1,1}\geq0$ and $\cD^1_{1,2}\geq0$. However, $\cD^1_{1,2}=2+0+0-3=-1<0$ and indeed it is not supersymmetric.\end{Example}

\begin{Example}\label{ex2}
Consider the bow diagram $(B,\Lambda,\vv)$:  
\begin{center}
    \input{Fig/finitesusyexample}
\end{center}  
Since it is separated of finite type A, supersymmetry reduces to verifying the following inequalities:  
\begin{equation*}
    \begin{aligned}
        \cD^1_{1,1}=&1+v_1+v_{-1}-v_0\geq0, & \quad
        \cD^1_{1,2}=&2+v_1-v_0 \geq 0, \\
        \cD^1_{2,1}=&2+v_{-1}-v_0\geq 0, & \quad
        \cD^1_{2,2}=&4-v_0\geq0.\\
    \end{aligned}
\end{equation*}  
along with $v_{-1}, v_0, v_1 \geq 0$.  

    If these hold, as notice in \S\ref{s.constructingdiagrams}, Proposition \ref{Prop-finitetypeAsupersymmetry} provides a method to construct an associated supersymmetric brane diagram. Let us show how that algorithm works. To simplify the graphical representation of brane diagrams, when two 5-branes are connected by $m \geq 0$ D3-branes, we draw a single D3-brane labelled $m$, omitting labels when $m=1$.  

Since $v_0$ represents the number of fixed D3-branes, the inequalities ensure $v_0 \leq 4$. The first step is to compute  
\begin{equation*}
\begin{aligned}
  f &= \max\left\{k\in\{0,1,2\} \mid v_0 \geq k,\ v_{-1} \geq k-1,\ 0 \geq k-2\right\} \\
    &= \min\bigl(2, v_0, v_{-1}+1\bigr).
\end{aligned}
\end{equation*} 
which gives the maximum number of fixed D3-branes that can (and will) be attached to the NS5-brane corresponding to $\bigcirc_1$. As shown in Proposition \ref{Prop-finitetypeAsupersymmetry}, the inequalities always guarantee $v_1 \geq v_0 - f$.  
  
If $f=0$ then $v_0=0$ and there is only one possible brane diagram:  
  \begin{center}
      \input{Fig/bdiagramf=0}
  \end{center}  
If $f=1$, then $v_0 \in \{1,2\}$ and the corresponding brane diagram is given by:
  \begin{center}
      \input{Fig/bdiagramf=1}
  \end{center}  
Note, in this case if $v_0=2$ then $v_{-1}=0$.

If $f=2$, then $v_0 \in \{2,3,4\}$ and $v_{-1}\geq1$. In this case the corresponding brane diagram is given by:  
  \begin{center}
      \input{Fig/bdiagramf=2}
  \end{center}  

Finally, in the proof of Theorem \ref{Theorem-susyfinitetypeAimpliesnonemptiness}, we provided a systematic method to construct a solution to $\mu = 0$ from a supersymmetric bow diagram of finite type A. We now apply this procedure to our example, analysing each case according to the value of $f$.

If $f=0$ then we can think of $(B,\Lambda,\vv)$ as obtained from $(B,\Lambda,0)$ after applying supersymmetric increments (see Theorem \ref{supersymmetricincrementsforbows}). Therefore, a solution is given by:
\begin{center}
   \input{Fig/solutionf=0}
\end{center}
where $c_1,\dots,c_{v_{-1}}\neq0$ and $c_i-c_j \neq 0$ for $i\neq j$. We need to check condition (\ref{condition-S2}) for the first triangle and condition (\ref{condition-S1}) for the second triangle. Let us verify only condition (\ref{condition-S2}) for the first triangle since the other follows similarly. We have to check that the smallest $B^+_1$-invariant vector subspace of $\CC^{v_{-1}}$ containing $(1,\dots,1)$ is the whole space. In this case it reduces to observe that for every $k\in\{2,\dots,v_{-1}\}$,
\begin{equation*}
    (B_1^+-c_k)\cdots(B_1^+-c_{v_{-1}})\begin{bmatrix}
        1\\
        \vdots\\
        1
    \end{bmatrix}=\begin{bmatrix}
        (c_1-c_k)\cdots(c_1-c_{v_{-1}})c_1\\
        \vdots\\
        (c_{k-1}-c_k)\cdots(c_{k-1}-c_{v_{-1}})c_{k-1}\\
        0\\
        \vdots\\
        0
    \end{bmatrix}.
\end{equation*}

If $f=1$, we can consider the bow subdiagram $(B,\Lambda,\Tilde{\vv})$ corresponding to the brane diagram obtained from $\mathcal{B}$ by removing unfixed D3-branes, that is:
\begin{center}
    \input{Fig/reducef=1}
\end{center}
Although in this case it is not hard to construct a solution for the moment map, let us proceed as in the algorithm. We use Hanany--Witten transitions to move $x_1$ through the NS5-branes connected to it by a D3-branes. We obtain a bow diagram having dimension vector $0$ which admits one solution for its moment map equation $\mu=0$. Hence, also $(B,\Lambda,\Tilde{\vv})$ admits a solution $\Tilde{m}=(\Tilde{A},\Tilde{B}^\pm,\Tilde{a},\Tilde{b},\Tilde{C},\Tilde{D)}$ for the corresponding moment map equation. The solution $\Tilde{m}$ could be explicitly constructed by following the proof of \cite[Proposition 7.1]{NT17} and it is a collection of maps of the type:
\begin{center}
    \input{Fig/solutionreducedf=1}
\end{center}
where $\Tilde{D}_1\Tilde{C}_1=0$ and the kernel of $\Tilde{b}_1$ is not in the kernel of $\Tilde{B}_1^-$ (since $v_0\leq2$).

Finally, since we can go from the new bow diagram to the original one by applying supersymmetric increments, we can construct the desired solution from $\Tilde{m}$ as follows:
\begin{center}
\input{Fig/solutionf=1}
\end{center} 
where:
\begin{gather*}
        C_1=\begin{bmatrix}
           \Tilde{C}_1 & 0 
        \end{bmatrix}, \ \ D_1=\begin{bmatrix}
           \Tilde{D}_1\\
           0
        \end{bmatrix}, \ \ 
     b_1=\Tilde{b}_1,\ \ a_1=\begin{bmatrix}
         1\\
         \vdots\\
         1
     \end{bmatrix},\ \ b_2=\begin{bmatrix}
         1&\dots&1
     \end{bmatrix} \\
     B_1^-=\Tilde{B}_1^-, \ \  B_1^+=B_2^-=\begin{bmatrix}
         c_1 & 0 & 0 \\
         0 & \ddots & 0 \\
         0 & 0 & c_{v_{-1}}
     \end{bmatrix},\ \
     A_1=\begin{bmatrix}
         b_1(B_1^{-}-c_1)^{-1}\\
         b_1(B_1^{-}-c_2)^{-1}\\
         \vdots\\
         b_1(B_1^{-}-c_n)^{-1}
     \end{bmatrix}
    \end{gather*}
where $c_1,\dots,c_n \neq 0$ such that $B_1^--c_i$ is invertible for every $i$ and $c_i-c_j\neq 0$ for every $i\neq j$.

The case $f=2$ is analogous to the previous ones, we leave the details to the reader.
\end{Example}

\begin{Example}
    Let us denote by $(B,\Lambda,\vv)$ the bow diagram with one arrow and one x-point:
    \begin{center}
\input{Fig/11bowdiagram}
\end{center}
Let us apply the algorithm to detect supersymmetry in this case (see \S\ref{algorithmsupersymmetry1}). It is separated and therefore the first step of the algorithm is trivial. Without loss of generality, $v_0-v_{1}\geq0$. Then, the second step of the algorithm consists in applying $(v_0-v_{1})$-times the clockwise Hanany--Witten transition to move $x_1$ through $\bigcirc_1$. To apply this step we have to check the inequalities:
\begin{equation*}
    \cD_{1,1}^t=1+2t+\frac{(t-1)(t-2)}{2}+(t+1)v_1-tv_0\geq0, \ \ t\in\{1,\dots,v_0-v_1\}
    \end{equation*}
If all of them are satisfied, then, by applying such transitions, we obtain the diagram:
\begin{equation}
    \begin{aligned}\input{Fig/11bowdiagramreduced}\label{1-1}\end{aligned}
\end{equation}
where $\cD^t_{1,1}$ extended to $t=0,-1$ behaves as expected, that is: $\cD^{-1}_{1,1}=v_0$ and $\cD^{0}_{1,1}=v_1$. Since $\cD_{1,1}^{v_0-v_{1}-1}-\cD_{1,1}^{v_0-v_{1}}=0$, it is clearly supersymmetric and, for instance, an associate supersymmetric brane diagram is given by stretching $\cD_{1,1}^{v_0-v_1}$ unfixed D3-branes doing one loop.

A brane diagram for the original bow diagram is obtained by applying $(v_0-v_1)$ Hanany--Witten transitions moving $x_1$ anticlockwise through $\bigcirc$ to the brane diagram associated with (\ref{1-1}). The brane diagram giving rise to the original bow diagram is obtained by adding for every $p\in\{0,\dots,v_0-v_1-1\}$ a anticlockwise-oriented fixed D3-brane with winding number $p$.

The solution to $\mu_{(B,\Lambda,\vv)}=0$ constructed by the algorithm involves two steps. First, the proof of Theorem \ref{supersymmetricincrementsforbows}, gives us the solution $(A=\operatorname{id},B^\pm=0,a=0,b=0,C=0,D=0)$ to $\mu=0$ where $\mu$ is the moment map associated with the bow diagram (\ref{1-1}). Then, applying the Hanany--Witten transitions we mentioned above, one can explicitly construct a solution for $\mu_{\Lambda,\vv}=0$.
\end{Example}

\begin{Example}
       We denote by $(B,\Lambda,\vv)$ the bow diagram:
    \begin{center}
\input{Fig/22bowdiagram}
\end{center}
Let us see how the algorithm for supersymmetry works in this case. It is separated and therefore the first step is trivial. We can assume $v_0-v_2\geq0$. Let $d$ be the greatest integer less than or equal to $(v_0-v_2)/2$. The second step consists in checking supersymmetry inequalities corresponding with moving $x_1, x_2$ anticlockwise through all the arrows $d$-times. Therefore, we have to check $2d$ inequalities. Suppose they are all satisfied, then, after performing the corresponding Hanany--Witten transitions, we reduce to the case $0\leq v_0-v_2\leq1$.

The next step in the algorithm consists of studying supersymmetry for a bow subdiagram obtained by decreasing $v_0,v_1,v_2$. However, the same argument can be used to reduce the problem to study supersymmetry of:
\begin{center}
    \input{Fig/22bowdiagram2}
\end{center}
where $a\leq v_1$, $b\leq v_{-1}$ maximising the sum $a+b \leq v_2$. 

If $a+b=v_2$, it is supersymmetric since $0\leq v_0-v_2\leq1$. Otherwise, we reduced the problem in studying:
\begin{center}
    \input{Fig/22bowdiagram3}
\end{center}
for which supersymmetry is equivalent to $v_0-(v_1+v_{-1}),\ v_2-(v_1+v_{-1})\in\{0,1\}$. However, we have $v_0\geq v_2 > v_1+v_{-1}$ and therefore, under this assumption, it is supersymmetric if and only if $v_0=v_2=v_1+v_{-1}+1$.

The construction of the supersymmetric brane diagram and the solution to $\mu = 0$ is a mix of the previous examples.
\end{Example}

\appendix
\section{Proof of $(c)\Longleftarrow(d)$}\label{InverseStep2-Appendic}
In this section we provide a direct proof of the reverse implication in $Step\ 2$ (see \S\ref{s.step2}). That is, let us consider a separated bow diagram $(B,\Lambda,\vv)$ as in Figure \ref{StypeAffine}. We prove that if it satisfies supersymmetry inequalities (\ref{susyinequalities}) then every Hanany--Witten equivalent bow diagram originates from a brane diagram (i.e. it has nonnegative dimensions). As noticed at the beginning of subsection \ref{s.step2}, we can assume $n,w>0$.

\begin{Definition} Consider a bow diagram $(B,\Lambda,\vv)$. Let us fix an x-point $x$ and an arrow $e$. Given a sequence of Hanany--Witten transitions, we define $C(x,e)$ to be the total number of transitions moving $x$ clockwise through $e$, minus the total number of of transitions moving $x$ anticlockwise through $e$.
\end{Definition}
\begin{Lemma}\label{Lemma1-reversestep2}
    Consider a separated bow diagram $(B,\Lambda,\vv)$ as in Figure \ref{StypeAffine} and suppose to apply a sequence of Hanany--Witten transitions. Then $C(x_1,e_n)\geq0$ or $C(x_w,e_1)\leq0$.
\end{Lemma}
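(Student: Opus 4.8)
The plan is to work on the universal cover of the circle carrying the diagram. Parametrising it as $\RR/\ZZ$ with the anticlockwise direction corresponding to increasing coordinate, I would lift every x-point and every arrow to a continuously evolving real-valued position, realising the discrete sequence of Hanany--Witten transitions by continuous motions that interpolate each transition as the relevant x-point and arrow swapping places. The two governing constraints are that no two x-points ever cross and no two arrows ever cross (we never move a $5$-brane through one of the same type, see \S\ref{s.hw}). On the universal cover these say exactly that the lifted difference of two same-type positions can never pass through an integer, hence stays in the integer-interval in which it starts.

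First I would record the initial, separated configuration. Since the diagram is separated and $n,w>0$, the $w$ x-points occupy a single wavy line, hence an arc strictly shorter than the whole circle, and the $n$ arrows occupy the complementary arc, also strictly shorter than the whole circle. Thus I may choose initial lifts realising the anticlockwise cyclic order $(e_n,\dots,e_1,x_1,\dots,x_w)$ dictated by the conventions of Figure~\ref{StypeAffine}, namely
$$\tilde e_n(0)<\dots<\tilde e_1(0)<\tilde x_1(0)<\dots<\tilde x_w(0)<\tilde e_n(0)+1 .$$
By the non-crossing constraints the differences $\tilde x_w(t)-\tilde x_1(t)$ and $\tilde e_1(t)-\tilde e_n(t)$ then remain in $[0,1)$ for all $t$ (the former being $0$ when $w=1$).

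Next I would translate $C(x,e)$ into the language of the lift. A transition moving $x$ clockwise (respectively anticlockwise) through $e$ is precisely one downward (respectively upward) passage of $\tilde x-\tilde e$ through an integer, so, writing $T$ for the final time,
$$C(x,e)=\big\lfloor \tilde x(0)-\tilde e(0)\big\rfloor-\big\lfloor \tilde x(T)-\tilde e(T)\big\rfloor ,$$
the endpoint values being non-integral since $x$ and $e$ sit at distinct positions in any genuine bow diagram. The initial orderings give $\lfloor \tilde x_1(0)-\tilde e_n(0)\rfloor=\lfloor \tilde x_w(0)-\tilde e_1(0)\rfloor=0$, whence $C(x_1,e_n)=-\lfloor \tilde x_1(T)-\tilde e_n(T)\rfloor$ and $C(x_w,e_1)=-\lfloor \tilde x_w(T)-\tilde e_1(T)\rfloor$.

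Finally I would argue by contradiction: if both conclusions failed, then $\tilde x_1(T)\ge \tilde e_n(T)+1$ and $\tilde x_w(T)<\tilde e_1(T)$. Combining with $\tilde e_1(T)<\tilde e_n(T)+1$ yields $\tilde x_w(T)<\tilde e_1(T)<\tilde e_n(T)+1\le \tilde x_1(T)$, i.e. $\tilde x_w(T)<\tilde x_1(T)$, contradicting $\tilde x_w(T)-\tilde x_1(T)\in[0,1)$. The step deserving the most care—indeed the only real obstacle—is the orientation and sign bookkeeping: one must check that with the paper's conventions $e_1$ is the clockwise neighbour of $x_1$ and $e_n$ the anticlockwise neighbour of $x_w$, and that ``clockwise'' corresponds to decreasing $\tilde x-\tilde e$, since a flipped convention would interchange which of the two inequalities survives. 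Everything else reduces to the elementary fact that non-crossing same-type objects have lifted differences trapped in a unit interval.
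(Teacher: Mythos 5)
Your proof is correct, and its formalization is genuinely different from the paper's. The paper disposes of the lemma in two sentences by a precedence argument at the two interfaces between the block of x-points and the block of arrows: since same-type branes never cross, $x_1$ can only pass $e_n$ anticlockwise after $x_w$ has, and $x_w$ can only pass $e_1$ clockwise after $x_1$ has; the disjunction on the \emph{net} counts $C(x_1,e_n)$, $C(x_w,e_1)$ is then left as an easy but implicit bookkeeping consequence. You instead lift everything to the universal cover $\RR\to\RR/\ZZ$, observe that the non-crossing constraints trap $\tilde x_w-\tilde x_1$ and $\tilde e_1-\tilde e_n$ in $[0,1)$, and identify $C(x,e)$ with the drop $\lfloor\tilde x(0)-\tilde e(0)\rfloor-\lfloor\tilde x(T)-\tilde e(T)\rfloor$, after which the conclusion follows from the four-term inequality chain you give. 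Your sign and adjacency conventions do match Figure \ref{StypeAffine} (anticlockwise cyclic order $e_n,\dots,e_1,x_1,\dots,x_w$, with $e_1$ the clockwise neighbour of $x_1$ and $e_n$ the anticlockwise neighbour of $x_w$, and clockwise motion decreasing $\tilde x-\tilde e$), and the degenerate cases $w=1$ or $n=1$ are covered since the trapped differences are then identically $0$. What your route buys is that it manifestly controls net counts rather than individual crossing events — the floor formula makes $C(x,e)$ a function of the endpoint configuration alone — and the same bookkeeping would immediately yield the window bounds of Lemma \ref{Lemma2-reversestep2}; what the paper's route buys is brevity, needing no continuous interpolation of the discrete transition sequence (a harmless extra modelling step in your version, justified because Hanany--Witten transitions are by definition local swaps of adjacent 5-branes of different types, so each transition can be realized as a single integer passage of the relevant lifted difference with no other crossings).
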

\begin{proof}
    It follows by noticing that, to move $x_1$ anticlockwise through $e_n$, we need to move $x_w$ anticlockwise through $e_n$. Similarly, to move $x_w$ clockwise through $e_1$, we need to move $x_1$ clockwise through $e_1$.
\end{proof}
\begin{Lemma}\label{Lemma2-reversestep2}
    Consider a separated bow diagram $(B,\Lambda,\vv)$ as in Figure \ref{StypeAffine}. Suppose to apply a sequence of Hanany--Witten transitions and that there exist $i\in\{1,\dots,w\}$ and $s\in\{1,\dots,n\}$ such that $C(x_i,e_s)=0$. Then:
    \begin{equation*}
        \begin{aligned}
            2\geq& C(x_k,e_d)\geq0 & & \text{if }k<i,\ d<s\\
            0\geq& C(x_k,e_d)\geq-2 & & \text{if }k>i,\ d>s\\
            1\geq& C(x_k,e_d)\geq-1 & & \text{if }k\leq i, d\geq s \text{ or }k\geq i, d\leq s
        \end{aligned}
    \end{equation*}
\end{Lemma}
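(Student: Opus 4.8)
I want to bound $C(x_k,e_d)$ given that some $C(x_i,e_s)=0$. Let me think about what these quantities mean and how they constrain each other.

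The key structural fact is about the relative ordering of x-points and arrows. In a separated diagram, the x-points $x_1,\dots,x_w$ sit on one wavy line in anticlockwise order, and the arrows $e_1,\dots,e_n$ are arranged around the circle. As we apply Hanany–Witten transitions, the quantity $C(x_k,e_d)$ records the net number of times $x_k$ has passed through $e_d$ clockwise. Crucially, two x-points $x_k, x_{k'}$ cannot pass each other (we never swap two branes of the same type), and similarly two arrows cannot pass each other. This means the cyclic order of all the x-points among themselves is preserved, and likewise for the arrows.

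Let me now plan the proof.

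The plan is to exploit the monotonicity of $C$ in both indices, coming from the no-crossing constraint. First I would establish the two basic monotonicity inequalities: for fixed $d$, if $k < k'$ then $C(x_k,e_d) \geq C(x_{k'},e_d)$, with the two differing by at most $1$ in a suitable sense; and symmetrically, for fixed $k$, if $d < d'$ then $C(x_k,e_d) \geq C(x_k,e_{d'})$. The reason is that to have $x_k$ wrap around $e_d$ one extra time clockwise, the ``trailing'' x-point $x_{k'}$ (which comes later anticlockwise and therefore lies behind $x_k$ in the clockwise direction of travel) must stay within one full loop of $x_k$ at all times, since $x_k$ cannot overtake $x_{k'}$ and vice versa. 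This bounds $|C(x_k,e_d) - C(x_{k'},e_d)| \leq 1$. I would prove these by tracking, at each individual transition, how the relevant counters change, and observing that the no-crossing condition forces $C(x_k,e_d)$ and $C(x_{k'},e_d)$ (resp.\ $C(x_k,e_d)$ and $C(x_k,e_{d'})$) to differ by at most one.

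Next, given the hypothesis $C(x_i,e_s)=0$, I would chain these one-step inequalities. For the case $k \leq i$ and $d \geq s$ (and the symmetric $k\geq i$, $d\leq s$): monotonicity in the first index gives $C(x_k,e_d) \leq C(x_i,e_d) + (i-k)\cdot(\text{step})$, but more carefully the combined bound is that moving from $(i,s)$ to $(k,d)$ through admissible monotone steps changes $C$ by at most $1$ in total, because one index increases the value while the other decreases it, and these partially cancel. This yields $-1 \leq C(x_k,e_d) \leq 1$. For the case $k<i$, $d<s$, both index-changes push $C$ upward relative to the anchor value $0$, but each contributes at most one unit and they combine to at most $2$, giving $0 \leq C(x_k,e_d) \leq 2$; the lower bound $0$ follows because decreasing both indices cannot decrease $C$ below the anchor. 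The case $k>i$, $d>s$ is the mirror image, giving $-2 \leq C(x_k,e_d)\leq 0$.

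The main obstacle I anticipate is making the ``differ by at most one'' claim fully rigorous, i.e.\ proving the one-step monotonicity lemma cleanly. The subtlety is that $C$ counts net transitions, not an instantaneous position, so I must argue that the \emph{net} count is controlled by the constraint that the actual cyclic positions of consecutive x-points (and consecutive arrows) never cross. The cleanest route is probably to introduce, for each intermediate diagram in the sequence, the actual winding position of each brane, show $C(x_k,e_d)$ equals a difference of integer parts of these positions, and then translate no-crossing into the statement that consecutive positions stay within one period of each other. Handling the endpoints (the ``seam'' between $e_n$ and $e_1$, and between $x_w$ and $x_1$) carefully—where Lemma~\ref{Lemma1-reversestep2} is exactly the relevant boundary input—will require attention, since that is where the cyclic structure can introduce an off-by-one that must be absorbed into the stated bounds $\pm 2$ rather than $\pm 1$.
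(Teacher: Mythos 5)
Your proposal is correct and takes essentially the same approach as the paper: both arguments rest on the single fact that x-points never cross x-points (and arrows never cross arrows), so net crossing counts can drift by at most one unit per index away from the anchor $C(x_i,e_s)=0$. Your decomposition into two one-index monotonicity bounds with slack $1$, chained together, is just a structured restatement of the paper's direct claim that $t\geq 2$ clockwise passes of $x_k$ through $e_d$ force at least $t-2$ clockwise passes of $x_i$ through $e_s$, together with the anticlockwise analogue.
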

\begin{proof}
We prove only that $2\geq C(x_k,e_d)\geq0$ if $k<i$ and $d<s$, as the other inequalities follow by completely analogous arguments. Given $k<i$ and $d<s$, if we move $x_k$ through $e_d$ clockwise $t\geq2$ times, then, since we cannot move an x-point through another x-point, we need to move $x_i$ through $e_s$ clockwise $t-2$ times. Hence, $2\geq C(x_k,e_d)$. Similarly, if we move $x_k$ through $e_d$ anticlockwise $t\geq1$ times, then we need to move $x_i$ through $e_s$ $t\geq 1$ times. Hence, $C(x_k,e_d)\geq0$.
\end{proof}
From Lemma \ref{Lemma-interpretationsusyinequalities}, the claim follows from the following proposition.
\begin{Proposition}
    Consider a separated bow diagram $(B,\Lambda,\vv)$ as in Figure \ref{StypeAffine}. Each entry of the dimension vector of every Hanany--Witten equivalent bow diagram is given by $\cD^t_{s,k}$ or $\aD^t_{s,k}$ for some $t\in\NN^+$, $s \in \{0,1,\dots,n\}$ and $k\in\{0,1,\dots,w\}$.
\end{Proposition}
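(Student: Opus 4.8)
The plan is to reduce the dimension of an arbitrary segment to a function of the net crossing data $C(x_i,e_s)$, and then to use Lemmas~\ref{Lemma1-reversestep2} and~\ref{Lemma2-reversestep2} to match that data with one of the standard clockwise or anticlockwise configurations that define $\cD^t_{s,k}$ and $\aD^t_{s,k}$. The first step is to observe that every reachable diagram is determined by its crossing numbers. Indeed, by Remark~\ref{Rem:HWandN} each forward swap of an arrow and an x-point changes the pair $(N_e,N_x)$ by $(-1,-1)$, so in any Hanany--Witten equivalent diagram the current jump $N_{e_s}$ (respectively $N_{x_i}$) equals its initial value corrected by $\sum_i C(x_i,e_s)$ (respectively $\sum_s C(x_i,e_s)$). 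Since the cyclic order of the $5$-branes and all of the jumps $N_{e_s},N_{x_i}$ are governed by the numbers $C(x_i,e_s)$, the entire dimension vector of a reachable diagram is a function of these numbers alone; in particular any two Hanany--Witten sequences with the same net crossing data yield the same bow diagram. Combined with Lemma~\ref{Lemma-interpretationsusyinequalities}, which already realizes the $\cD^t_{s,k}$ and $\aD^t_{s,k}$ as genuine segment dimensions $\cd^t_{s,k}$, $\ad^t_{s,k}$ of the standard sequences, it therefore suffices to show that the crossing data localized at any given segment coincides with that of a standard configuration.

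Next I would fix a segment $\zeta$ bounded by two consecutive $5$-branes, say $x_k$ and $\bigcirc_s$ in the generic case. By Lemma~\ref{Lemma1-reversestep2} either $C(x_1,e_n)\geq 0$ or $C(x_w,e_1)\leq 0$; these alternatives single out the ``clockwise-dominated'' and ``anticlockwise-dominated'' regimes that correspond respectively to the $\cD$ and $\aD$ families, and it is enough to treat the clockwise case, the other following by reversing the orientation. I would then factor out the complete loops: after subtracting the maximal number $t-1$ of full clockwise revolutions common to all crossings, the residual configuration possesses a vanishing crossing number $C(x_i,e_s)=0$, which is exactly the hypothesis needed to invoke Lemma~\ref{Lemma2-reversestep2}.

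Applying Lemma~\ref{Lemma2-reversestep2} at such a vanishing index pins down the residual crossing pattern $C(x_k,e_d)$ to take only the values produced by moving $x_1,\dots,x_k$ clockwise through $\bigcirc_1,\dots,\bigcirc_s$ exactly once; together with the $t-1$ full loops factored out, this is precisely the standard sequence of Remark~\ref{R:interpretationinequalities} defining $\cd^t_{s,k}$. Since by the first step the dimension of $\zeta$ depends only on the net crossing data, and that data now agrees with the standard clockwise configuration, we conclude $v_\zeta=\cd^t_{s,k}=\cD^t_{s,k}$ by Lemma~\ref{Lemma-interpretationsusyinequalities}. The degenerate bounding pairs, namely two consecutive x-points or two consecutive arrows, are absorbed by the boundary relations $\cD^t_{s,0}=\cD^{t-1}_{s,w}$ and $\cD^t_{0,k}=\cD^{t-1}_{n,k}$ of Remark~\ref{R:interpretationinequalities}, and the anticlockwise regime yields $\aD^t_{s,k}$ by the symmetric argument.

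The hardest part will be the middle step: extracting a single well-defined winding parameter $t$ from a general interleaved sequence of transitions and verifying that the whole array $\bigl(C(x_k,e_d)\bigr)_{d}$, and not merely the entries adjacent to $\zeta$, matches a standard clockwise or anticlockwise sequence. Lemma~\ref{Lemma2-reversestep2} supplies the pointwise bounds $|C(x_k,e_d)|\leq 2$, but converting these bounds into the exact combinatorial identity between $C(x_k,e_d)$ and the crossing count of the standard configuration requires a careful case analysis organized around the vanishing index, and this is where essentially all of the bookkeeping is concentrated.
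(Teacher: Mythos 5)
Your overall route is the paper's: the dichotomy of Lemma~\ref{Lemma1-reversestep2}, factoring out full loops, and then constraining the residual sequence via Lemma~\ref{Lemma2-reversestep2}. But the middle step, which you yourself flag as the hard part, contains a genuine gap as stated. After factoring out $t=C(x_1,e_n)$ full clockwise loops (note: $t$ loops, not $t-1$, is what makes the residual satisfy $C(x_1,e_n)=0$), Lemma~\ref{Lemma2-reversestep2} only bounds all residual crossing numbers in $\{-1,0,1\}$; it does \emph{not} pin the residual down to ``moving $x_1,\dots,x_k$ clockwise through $\bigcirc_1,\dots,\bigcirc_s$ exactly once.'' Mixed residuals genuinely occur: for instance, moving $x_1$ clockwise through $e_1$ while moving $x_w$ anticlockwise through $e_n$ is a legitimate sequence with $C(x_1,e_1)=+1$, $C(x_w,e_n)=-1$, both compatible with the vanishing entry at $(x_1,e_n)$. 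So the residual configuration is in general a three-block pattern — an initial block of x-points moved clockwise, a stationary middle block, and a final block moved anticlockwise — and for segments in the arc affected by the anticlockwise block your identification fails. The paper's proof handles exactly this: it observes that the two affected arcs are disjoint, identifies the clockwise arc with $\cD^{t+1}_{s,k}$ and the untouched arcs with $\cD^t_{s,k}$, and, crucially in the case $t=0$, invokes the identity $\cD^0_{n-s,w-k}=\aD^1_{n+1-s,w+1-k}$ of Remark~\ref{Inequalitiesfort=0} to recognize the dimensions behind $e_n$ as anticlockwise values $\aD^1_{s,k}$. Your boundary relations $\cD^t_{s,0}=\cD^{t-1}_{s,w}$, $\cD^t_{0,k}=\cD^{t-1}_{n,k}$ concern degenerate index pairs and do not substitute for this mixed-block analysis.

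There is also a secondary gap in your first step. Remark~\ref{Rem:HWandN} only tracks the jumps $N_{e_s}$, $N_{x_i}$, and on an affine (circular) diagram the collection of jumps determines the dimension vector merely up to a global additive constant; so ``the entire dimension vector is a function of the crossing data alone'' does not follow from what you wrote — you would need to pin down at least one absolute dimension, or argue by induction on the length of the sequence. Some form of this well-definedness is indeed needed (the paper uses it tacitly when it replaces the residual by a monotone representative in which every x-point crosses each arrow at most once and $x_1$ never crosses $e_n$), but your justification via jump bookkeeping is insufficient as it stands. Relatedly, factoring out the ``maximal number of revolutions common to all crossings'' produces a vanishing entry at an \emph{interior} pair, where Lemma~\ref{Lemma2-reversestep2} only yields the weaker bounds $[-2,2]$ and $[0,2]$ in the off-quadrants; the clean $[-1,1]$ bound that your argument needs comes from placing the vanishing entry at the extreme pair $(x_1,e_n)$ (or $(x_w,e_1)$) supplied by Lemma~\ref{Lemma1-reversestep2}, as the paper does.
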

\begin{proof} If either $n=0$ or $w=0$ then the claim is trivial (see \S\ref{s.step2}).

Let us assume $n,w>0$. Let us notice that, since every Hanany--Witten transition has an inverse (see (\ref{HWprocess})), any two Hanany--Witten equivalent bow diagrams are connected by infinitely many sequences of Hanany--Witten transitions.

Let us apply a sequence of Hanany--Witten transitions to $(B, \Lambda, \vv)$ and denote by $(B, \Lambda', \vv')$ the resulting bow diagram.
By Lemma \ref{Lemma1-reversestep2}, $C(x_1,e_n)\geq0$ or $C(x_w,e_1)\leq0$.

Suppose $t\coloneqq C(x_1,e_n)\geq0$. That is, the effect of the considered sequence on $x_1$ consists of moving it through $\bigcirc_n$ in the clockwise direction exactly $t$ times, with $t \geq 0$. Then, we can obtain $(B,\Lambda',\vv')$ by applying two sequence of transitions as follows. First, we move all the x-points clockwise through all the arrows $t$ times. Thanks to Lemma \ref{Lemma-interpretationsusyinequalities}, this sequence of transitions generates the bow diagram:
    \begin{equation}\label{midstep-appendix}
        \input{Fig/propstep2}
    \end{equation}
To obtain $(B,\Lambda',\vv')$, we left to apply a sequence of transitions to (\ref{midstep-appendix}) for which $C(x_1,e_n)=0$. From Lemma \ref{Lemma2-reversestep2}, for such sequence of Hanany--Witten transitions we have $1\geq C(x_k,e_d)\geq-1$ for every $k\in\{1,\dots,w\}$ and $s\in\{1,\dots,n\}$. In particular, we can choice a sequence of transitions such that every x-point goes through an arrow at most once and $x_1$ never cross $e_n$. 

Let us fix such a sequence. Then there exists a partition of the set of x-points in three (possibly empty) disjoint subsets: $\{x_1,\dots,x_k\}$, $\{x_{k+1},\dots,x_{d-1}\}$ and $\{x_d,\dots,x_w\}$ such that: we move $x_1,\dots,x_k$ clockwise without crossing $e_n$, we do not move $x_{k+1},\dots,x_{d-1}$ and we move $x_d,x_{d+1},\dots,x_w$ anticlockwise. 

Let us notice that the set of segments involved in the transitions of $x_1,\dots,x_k$ and the set of segments involved in the transitions of $x_d,\dots,x_w$ are disjoint.

Then, the entries of $\vv'$ corresponding to the segments in the arc starting from $x_w$ towards $x_1$ in the anticlockwise direction and in the arc starting from $e_1$ towards $e_n$ in the anticlockwise direction are given by $\cD^{t}_{s,k}$ for some $s,k$. If $\{x_1,\dots,x_k\}$ is non-empty, the dimensions on the segments in the arc starting from $x_1$ towards $e_1$ in the anticlockwise direction are given by $\cD^{t+1}_{s,k}$ for some $s,k$. 

Finally, we claim that if $\{x_d,\dots,x_w\}$ is non-empty, the dimensions on the segments in the arc starting from $e_n$ towards $x_w$ in the anticlockwise direction are given by $\cD^{t}_{s,k}$ for some $s,k$. If $t>0$ this is clear. If $t=0$, thanks to Remark \ref{Inequalitiesfort=0}, these dimensions are given by $\aD^1_{s,k}$ for some $s,k$, as expected. 

Analogous considerations hold if we consider the case $C(x_w,e_1)\leq0$.
\end{proof}
\section{Weights of affine Lie algebras and level-rank duality}\label{s.weights}
Here we fix notations and recall useful facts about weights of affine Lie algebras, their relation with generalized Young diagrams and the induced level-rank duality.
\subsection{Weights of finite Lie algebras}\label{s.A1}
Let us fix notations for weights of $\Liesl_w$ and $\Liegl_w$. We fix the Cartan subalgebra of $\Liegl_w$ (resp. $\Liesl_w$) given by the diagonal matrices (resp. traceless diagonal matrices). Let us denote by $\epsilon_1,\dots,\epsilon_w$ the canonical basis of the dual of the Cartan subalgebra and identify the space of weights of $\Liegl_w$ (resp. $\Liesl_w$) with $\ZZ^w$ (resp. $\ZZ^w/\ZZ(1,\dots,1)$).

We denote by $\alpha_1, \dots, \alpha_{w-1}$ the simple roots of $\mathfrak{sl}_w$, and by $\Lambda_1, \dots, \Lambda_{w-1}$ the fundamental weights of $\mathfrak{sl}_w$. In coordinates:
\begin{equation*}
    \alpha_i=[0,\dots,0,\underset{i}{1},\underset{i+1}{-1},0,\dots,0], \ \ \Lambda_i=[\underbrace{1,\dots,1}_{i},0,\dots,0] \in \ZZ^w,
\end{equation*}
for $i=1,\dots,w-1$.
For a later purpose, let us define $\Lambda_w=[1,\dots,1]\in\ZZ^w$, that is the weight corresponding to the trace.

There is a natural projection from $\mathfrak{gl}_w$-weights to $\mathfrak{sl}_w$-weights given by factoring out the trace map, that is: $\mathbb{Z}^w \to \mathbb{Z}^w / \mathbb{Z}(1,\dots,1)$.
Given a $\Liegl_w$-weight $\lambda=\sum\limits_{i=1}^{w}\lambda_i\epsilon_i$, the corresponding $\Liesl_w$-weight is given by:
\begin{equation*}
    \sum\limits_{i=1}^{w-1}(\lambda_i-\lambda_{i+1})\Lambda_i=\sum\limits_{i=1}^{w-1}\bigl(\sum\limits_{j=1}^{i}\lambda_j\bigr)\alpha_i-\vert\lambda\vert\Lambda_{w-1}.
\end{equation*}
where $\vert\lambda\vert=\langle\lambda,\operatorname{diag}(1,\dots,1)\rangle=\sum\limits_{i=1}^{w}\lambda_i$ is known as the charge of $\lambda$.

\begin{Definition}
    Given $\lambda=[\lambda_1,\dots,\lambda_w]\in\ZZ^w$. It is dominant as $\Liegl_w$-weight and $\Liesl_w$-weight, if $\lambda_1\geq\cdots\geq\lambda_w$. It is a polynomial dominant $\Liegl_w$-weight if $\lambda_1\geq\cdots\geq\lambda_w\geq0$.
\end{Definition}

\subsection{Weights of affine Lie algebras}\label{s.A2}
Let $\widehat{\mathfrak{sl}}_w$ (resp. $\widehat{\Liegl_w}$) be the central extension of the loop algebra associated with $\mathfrak{sl} _w$ (resp. $\Liegl_w$), and let $(\mathfrak{sl}_w)_{\text{aff}}$ (resp. $(\Liegl_w)_{\text{aff}}$) be the corresponding Kac--Moody affine Lie algebra.

Let $\alpha_0, \alpha_1, \dots, \alpha_{w-1}$ denote the simple roots of $(\mathfrak{sl}_w)_{\text{aff}}$, $\delta = \alpha_0 + \alpha_1 + \cdots + \alpha_{w-1}$ be the null root and let $\Lambda_0, \Lambda_1, \dots, \Lambda_{w-1}$ be the fundamental weights of $(\Liesl_w)_{\text{aff}}$.
 
If $d$ is the degree operator in the Cartan subalgebra of $(\mathfrak{sl}_w)_{\text{aff}}$, our convention is 
 \begin{equation*}
     \langle \Lambda_i,d\rangle=0, \ \ \ \langle \alpha_i,d \rangle=\delta_{i0}, \ \ \ \text{for }i=0,\dots,w-1.
 \end{equation*}

Simple roots and fundamental weights are related by:
\begin{equation}\label{eq.simpleandfundamental}
    \alpha_i=2\Lambda_i-\Lambda_{i-1}-\Lambda_{i+1}+\delta_{0i}\delta,
\end{equation}
where $i\in\ZZ/\ZZ w$.

The notion of charge of a $\Liegl_w$-weight extends to weights of $\widehat{\Liegl_w}$ and $\bigl(\Liegl_w\bigr)_{\text{aff}}$. That is, if $\lambda=x\Lambda_0+\sum\limits_{i=1}^{w}\lambda_1\epsilon_i+y\delta$, the charge of $\lambda$ is $\vert\lambda\vert=\sum\limits_{i=1}^{w}\lambda_i$.

If $c$ is the generator of the central extension of $(\widehat{\Liesl}_w)$, then the \textit{level} of a weight $\gamma$ is defined as $\langle \gamma,c \rangle$. Note, if $\gamma=\sum\limits_{i=0}^{w-1}\gamma_i\Lambda_i + m\delta$, then the level of $\gamma$ is $\sum\limits_{i=0}^{w-1}\gamma_i$.

\begin{Remark}\label{projectionmodulodelta}
The previous lattices are related by:
\begin{equation}\label{projectionsweights}
\begin{tikzcd}
(\Liegl_w)_{\text{aff}}\text{-weights} \arrow[r] \arrow[d] & \widehat{\Liegl}_w\text{-weights} \arrow[d] \\
(\Liesl_w)_{\text{aff}}\text{-weights} \arrow[r] & \widehat{\Liesl}_w\text{-weights}
\end{tikzcd}\end{equation}
where the horizontal maps are given by the quotient modulo $\ZZ\delta$ and the vertical maps are given by the quotient modulo $\ZZ\Lambda_w$.
\end{Remark}

\begin{Definition}\label{definitiondominanceorder}
The set of dominant $(\Liesl_w)_{\text{aff}}$-weights is defined as $\bigoplus\limits_{i+0}^{w-1}\NN\Lambda_i\oplus\ZZ\delta$. If $\lambda,\mu$ are $(\Liesl_w)_{\text{aff}}$-weights, we say that $\lambda\geq\mu$ in the dominance order if $\lambda-\mu$ is sum of simple roots.

The same definition extends to the lattice of $(\Liegl_w)_{\text{aff}}$-weights via (\ref{projectionsweights}).
\end{Definition}

\subsection{Generalized Young diagrams and level-rank duality}\label{s.A3}
In order to state the level-rank duality for weights of affine Lie algebras, it is useful to recall the notion of generalized Young diagram. See \cite{nakanishi1992level}, \cite[Appendix A]{nak09}, \cite[Section 3(v) and 3(vi)]{nakajima2018towards} and \cite[Section 7.6]{NT17}.

A \textit{generalized Young diagram with $w$ rows and level $n$ constraint} is an integral vector $[\lambda_1, \lambda_2, \dots, \lambda_w] \in \ZZ^w$ satisfying the following conditions:
\begin{equation*}
        \lambda_1 \geq \lambda_2 \geq \dots \geq \lambda_w\geq\lambda_1-n.
\end{equation*}
We denote the set of all such generalized Young diagrams by $\Y_w^n$. Using Maya diagrams, we can graphically represent generalized Young diagrams as follows. Given $[\lambda_1, \dots, \lambda_w] \in \Y_w^n$, we construct infinitely many blocks labelled with $N \in \frac{1}{2} + \ZZ$. Each block is decomposed into $w \times n$ boxes, where $w$ is the number of rows and $n$ is the number of columns. For $1 \leq i \leq w$ and $1 \leq s \leq n$, we denote by $m_N(i, s)$ the box $(i, s)$ in the block $N$. If $n(N - \frac{1}{2}) + s \leq \lambda_i$, we colour the box $m_N(i, s)$ grey; otherwise, we leave it white.

For instance, if $w=2$ and $n=3$, then the Maya diagram corresponding to the generalized Young diagram $\lambda=[4,1]\in\Y_2^3$ is given by:
\begin{equation*}
    \input{Fig/Mayadiagram1}
\end{equation*}
By definition, the Maya diagram associated with a generalized Young diagram \textit{sits} in two blocks. That is, there exists $N\in\frac{1}{2}+\ZZ$ such that for every $i,s$, the box $m_{N'}(i,s)$ is gray if $N'<N$ and it is white if $N'>N+\frac{1}{2}$. Also, a generalized Young diagram is a usual Young diagram if and only if its Maya diagram sits in the block $N=\frac{1}{2}$, that is, for every $i,s$, the box $m_{N'}(i,s)$ is gray for all $N'<\frac{1}{2}$ and it is white for all $N'>\frac{1}{2}$.

Thanks to Maya diagrams, we can define the transpose of $[\lambda_i]\in\Y_w^n$ as the generalized Young diagram associated with the Maya diagram obtained by transposing every block of the original Maya diagram along its diagonal. We denote the transpose generalized Young diagram (with $n$ rows and level $w$ constraint) by $[\prescript{t}{}{\lambda}_s]\in\Y_n^w$.

For instance, with respect to the above example, the transposed Maya diagram is given by: 
\begin{equation*}
    \input{Fig/Mayadiagram2}
\end{equation*}
which corresponds to $\prescript{t}{}{\lambda}=[3,1,1]\in\Y_3^2$.

A $(\Liegl_w)_{\text{aff}}$-weight $\lambda$ bijectively corresponds to an integral vector $[\lambda_1,\dots,\lambda_w]$ together with an integer $l(\lambda)$ and an integer $\langle\lambda,d\rangle$ by:
\begin{equation*}
    \Bigl(l(\lambda)-(\lambda_1-\lambda_w)\Bigr)\Lambda_0+\sum\limits_{i=1}^{w-1}(\lambda_i-\lambda_{i+1})\Lambda_i+\lambda_w\Lambda_w+\langle\lambda,d\rangle\delta.
\end{equation*}
Thus, we have a bijection between dominant $(\Liegl_w)_{\text{aff}}$-weights of level $n$ and $\Y_w^n\times\ZZ$.
The projection of $(\Liegl_w)_{\text{aff}}$-weights to $\widehat{\Liegl}_w$-weights induces a bijection between $\Y_w^n$ and dominant $\widehat{\Liegl}_w$-weights of level $n$.
The projection of $\widehat{\Liegl}_w$-weights to $\widehat{\Liesl}_w$-weights induces a map from $\Y_w^n$ to dominant $\widehat{sl}_w$-weights of level $n$ given by the formula (see eq. (3.16) in \cite{nakanishi1992level}):
\begin{equation}\label{eq.diagramstoweights}
 \Bigl(n-(\lambda_1-\lambda_w)\Bigr)\Lambda_0+\sum\limits_{i=1}^{w-1}(\lambda_i-\lambda_{i+1})\Lambda_i.
\end{equation}

Therefore, the transposition of generalized Young diagrams gives a bijection between dominant $\widehat{\Liegl}_w$-weights of level $n$ and dominant $\widehat{\Liegl}_n$-weights of level $w$. 

\begin{Definition}\label{eq.transposeweights}
   Let $\lambda$ be a dominant $(\Liegl_w)_{\text{aff}}$-weight of level $n$, given by $[\lambda_i]\in\Y_w^n$ and $\langle\lambda,d\rangle\in\ZZ$. Then, we define its transposed $\prescript{t}{}{\lambda}$ as the dominant $(\Liegl_n)_{\text{aff}}$-weight of level $w$ given by $[\prescript{t}{}{\lambda}_i]\in\Y_w^n$ and $-\langle\lambda,d\rangle\in\ZZ$. 
\end{Definition}

\begin{Remark}\label{remarkdominanceorder} Thanks to (\ref{eq.simpleandfundamental}), we have:
\begin{equation}\label{eq.changingweightbasis}
\begin{aligned}
   \lambda=(n-\lambda_1+\lambda_w)\Lambda_0+\sum\limits_{i=1}^{w-1}(\lambda_i-\lambda_{i+1})\Lambda_i+\langle\lambda,d\rangle\delta\\=(n+|\lambda|)\Lambda_0+\sum\limits_{i=1}^{w-1}\Bigl(\sum\limits_{j=1}^{i}\lambda_j\Bigr)\alpha_i-|\lambda|\Lambda_{w-1}+\langle\lambda,d\rangle\delta
\end{aligned}\end{equation}
Let $\lambda,\mu$ be $(\Liesl_w)_{\text{aff}}$-weights (or $(\Liegl_w)_{\text{aff}}$-weights) of the same level and suppose there exist associated integral vectors, say $[\lambda_i],[\mu_i]\in\ZZ^w$, with the same charge. Thanks to (\ref{eq.changingweightbasis}), we have $\lambda\geq\mu$ in the dominance order if and only if $$\sum\limits_{i=1}^{j}\lambda_i-\sum\limits_{i=1}^{j}\mu_i+\langle\lambda-\mu,d\rangle\geq0$$ for every $j\in\{1,\dots,w\}$.
\end{Remark}

\bibliographystyle{alpha}
\bibliography{00main}

\end{document}